\def\Spa{\textrm{Spa}}
\def\Spec{\textrm{Spec}}
\def\Op{\textrm{Op}}
\def\Sh{\textrm{Sh}}
\def\gp{\textrm{gp}}
\def\et{\textrm{\'et}}
\def\RZ{\textrm{RZ}}
\theoremstyle{definition}
\newtheorem{theorem}{Theorem}[section]
\newtheorem{definition}[theorem]{Definition}
\newtheorem{example}[theorem]{Example}
\newtheorem{proposition}[theorem]{Proposition}
\newtheorem{lemma}[theorem]{Lemma}
\newtheorem{remark}[theorem]{Remark}
\newtheorem{corollary}[theorem]{Corollary}
\newtheorem{conjecture}[theorem]{Conjecture}
\newtheorem{DefLem}[theorem]{Definition/Lemma}
\newtheorem{notation}[theorem]{Notation}
\newtheorem{claim}[theorem]{Claim}
\newcommand\llog{\mathrm{log}}
\newcommand\dlog{\mathrm{dlog}}
\newcommand\supp{\mathrm{supp}}
\newcommand\tame{\textrm{t}}
\newcommand\sett{\textrm{s\'et}}
\newcommand{\mX}{\mathcal{X}}
\newcommand{\mY}{\mathcal{Y}}
\newcommand{\mO}{\mathcal{O}}
\newcommand{\mZ}{\mathcal{Z}}
\newcommand{\mU}{\mathcal{U}}
\newcommand{\mm}{\mathfrak{m}}
\newcommand{\mM}{\mathcal{M}}
\newcommand{\mF}{\mathbb{F}}
\newcommand{\mN}{\mathbb{N}}
\newcommand{\mbZ}{\mathbb{Z}}
\DeclareMathOperator*{\colim}{\mathrm{colim}}
\newcommand{\nn}{\textbf{n}}
\newcommand{\lcirc}[1]{\scaleobj{.8}{\mbox{\Kr{#1}}}}
\numberwithin{equation}{section}
\begin{document}

\title[Purity in the tame cohomology]{Purity in the tame cohomology}


\author{Amine Koubaa}
\email{koubaa@math.uni-hannover.de}
\address{Institut für Algebraische Geometrie, Leibniz Universität Hannover, Welfengarten 1, 30167 Hannover Germany}
\keywords{Tame cohomology, Cartier operator, logarithmic differentials, logarithmic schemes, adic spaces}
\thanks{ Funded by the Deutsche Forschungsgemeinschaft (DFG, German Research Foundation) TRR 326 \textit{Geometry and Arithmetic of Uniformized Structures}, project number 444845124.}


\date{\today}


\begin{abstract}
Let $(X,Z)$ be a regular pair of pure codimension $r$ over a base scheme $S$ of characteristic $p$.  Under the assumption of resolution of singularities, we prove purity for the tame cohomology with coefficient in the logarithmic de Rham-Witt sheaves $\nu_m(n)$ for the regular pair $(X,Z)$, i.e. the existence of a natural isomorphism
\[
Ri^!\nu_{X,m}(n)\cong \nu_{Z,m}(n-r)[-r]
\]
for all $m,n\geq 0$, where the logarithmic de Rham-Witt sheaves $\nu_{X,m}(n)$ are the Frobenius-fixed elements in the de Rham-Witt sheaves $W_m\Omega^{n}_{X/\mF_p}$. 
\end{abstract}

\maketitle
\tableofcontents
\section{Introduction}
\subsection{\texorpdfstring{Purity in the prime to $p$ case}{Purity in the prime to p case}}
Let $X$ and $Z$ be regular schemes, and let $i: Z \hookrightarrow X$ be a closed immersion of pure codimension $r$. We refer to the pair $(X,Z)$ as a \emph{regular pair} of codimension $r$. If $X$ and $Z$ are smooth over a base scheme $S = \Spec(k)$, where $k$ is a field and $i:Z\hookrightarrow X$ is a closed immersion, then we call $(X,Z)$ a \emph{smooth pair} over $k$.
Given a regular pair $(Z, X)$ of codimension $r$, the theorem of absolute cohomological purity, due to Gabber and Fujiwara \cite{FujiwaraPurity}, asserts the following:

\begin{theorem}[\cite{FujiwaraPurity}]\label{PurityPrimeToP}
	Let $(X,Z)$ be a regular pair of codimension $r$, and let $i: Z \hookrightarrow X$ be the associated closed immersion. Let $l$ be an integer which is invertible on $X$. Then there exists a canonical isomorphism in the derived category of étale sheaves:
	\begin{equation}\label{PurityPrimetoPIso}
		Ri^! \mathbb{Z}/l(n)[2r] \cong i^* \mathbb{Z}/l(n - r).
	\end{equation}

\end{theorem}

\subsection{Semi-purity of the étale cohomology with coefficients in the log de Rham-Witt sheaves}
Assume that $(X,Z)$ is a smooth pair over a perfect field $k$ of characteristic $p$. Let $S:=\Spec(k)$. Our goal is to construct a purity isomorphism analogous to \eqref{PurityPrimetoPIso}, replacing $\mathbb{Z}/l$ with $\mathbb{Z}/p^m$. However, defining $\mathbb{Z}/p^m(n)$ as $\mu_{p^m}^{\otimes n}$ does not make sense, since the group scheme $\mu_{p^m} \cong \Spec(k[T]/(T^{p^m} - 1))$ is not an \'etale sheaf.


In \cite[Theorem 2.1]{MilneValuesOfZeta} and \cite[Théorème 3.5.8]{CohomologyLogGros}, Milne and Gros presented an alternative, namely the logarithmic de Rham-Witt sheaf $\nu_m(n)$ (cf. \cref{Def:LogDeRhamWitt})  and proved a semi-purity theorem for it. Namely that canonical isomorphisms
\begin{equation}\label{eq:semi-purityEtale}
	R^ji^!\nu_{X,m}(n)=\begin{cases}
		0 &\textrm{ if } j\neq r,r+1\\
		\nu_{Z,m}(n-r) &\textrm{ if } j=r
	\end{cases}
\end{equation}
exist. 

The remaining term $R^{r+1}i^!\nu_{X,m}(n)$ is the obstruction to full purity, and it may fail to vanish, as we show in \cref{Ex:CounterExampleVanishing}.

The logarithmic de Rham-Witt sheaves $\nu_{X,1}(n)$ are characterized by the short exact sequence (cf. \cite{MilneDuality}):
\begin{equation}\label{SESCartierIntro}
	0 \to \nu_{X,1}(n) \to Z \Omega^n_{X/S} \xrightarrow{C - 1} \Omega^n_{X/S} \to 0,
\end{equation}
where $C$ denotes the Cartier operator (see \cite[Lemma 1.1]{MilneDuality} for its definition and existence) and $Z \Omega^n_{X/S}$ is the sheaf of closed $n$-forms.
\begin{example}\label{Ex:CounterExampleVanishing}
	Consider the case where $X = \Spec(k[T])$ and $Z = V(T)$. Then $R^2 i^! \nu_{X,1}(0)=R^2 i^! \mbZ/p\mbZ$ is non-zero.
\end{example}

\begin{proof}[Proof that $R^2 i^! \nu_{X,1}(0)$ is non-zero]
		The claim is étale local, hence we may assume that $X$ is strictly henselian. Let $A:= k[T]^{\mathrm{sh}}_{(T)}$ be a strict henselization of $k[T]$ at the ideal $(T)$ and replace $X$ with $\Spec(A)$.
		As in the proof of \cite[Theorem 2.1]{MilneValuesOfZeta}, the long exact sequence of cohomology groups associated to \eqref{SESCartierIntro} is
		\[
		0\to R^{1}i^!\nu_{X,1}(n)\to R^1i^!Z\Omega_{X/k}^n\xrightarrow{C-1} R^1i^!\Omega_{X/k}^n\to R^{2}i^!\nu_{X,1}(n)\to 0.
		\]
		Hence it is enough to prove that $R^1i^!Z\Omega_{X/k}^n\xrightarrow{C-1} R^1i^!\Omega_{X/k}^n$ is not surjective.
		
	When $n=0$, $\Omega^0_{X/k} = \mathcal{O}_X$ and $Z\Omega^0_{X/k} = \mathcal{O}_X^p$. Thus $R^1 i^! Z \Omega^0_{X/k}$ and $R^1 i^! \Omega^0_{X/k}$ are isomorphic to the quasi-coherent sheaves associated to $(A[T^{-1}])^p/A^p$  and $A[T^{-1}]/A$ respectively.

	The map $C - 1$ is identified with the Artin-Schreier map $$F^{-1} - 1 : (A[T^{-1}])^p/A^p \to A[T^{-1}]/A, \quad \gamma^p\mapsto \gamma-\gamma^p.$$ Since the equation $\gamma-\gamma^p = 1/T$ has no solution $\gamma$ in $A[T^{-1}]^p$, the element $1/T$ does not lie in the image of $F^{-1} - 1$, and hence $R^2 i^! \nu_{X,1}(0)$ is non-zero.
\end{proof}

\subsection{Purity of the tame cohomology in positive characteristic}
As purity fails in the \'etale topology, we  work  with the \emph{tame topology} instead, defined in \cite{HSTame}. Roughly speaking, this is the topology whose covers are \'etale morphisms that are tamely ramified at the boundary. For instance, Artin--Schreier covers that ramify at the boundary are excluded. For more details, see \cite{HSTame} or \cref{ChapterAdic} of this document. For a scheme $X$ over a field $k$, we denote the tame site by $(X/k)_t$.

We now state the main theorem of this article as well as a sketch of the proof.

\begin{theorem}\label{MainTheorem}
Let $X$ be a smooth scheme over a perfect field $k$. Let $i: Z \hookrightarrow X$ be a smooth closed immersion of pure codimension $r$. Assume that resolution of singularities and embedded resolution of singularities (as stated in \cref{ConjResolution3} and \cref{GeneralResolution}) hold. Then there is an isomorphism in the derived category of tame sheaves $D((Z/k)_t)$:
\[
\nu_{Z, m}(n - r)[-r] \xrightarrow{\sim} Ri^! \nu_{X, m}(n).
\]
\end{theorem}
\begin{remark}
	The following sketch of the proof uses discretely ringed adic spaces, introduced in \cref{ChapterAdic}. We define $\mathcal{X} := \Spa(X, k)$ and $\mathcal{Z} := \Spa(Z, k)$. The idea of the proof should remain accessible even without a full familiarity with adic spaces.

	The adic version of the theorem states that
	\[
	\nu_{\mathcal{Z}, m}(n - r)[-r] \xrightarrow{\sim} Ri^! \nu_{\mathcal{X}, m}(n)
	\]
	is an isomorphism in $D(\mathcal{Z}_t)$. The two formulations are equivalent by \cite[§14]{HSTame}.
\end{remark}

\begin{proof}[Sketch of the proof]
	\begin{enumerate}
			\item Under the assumption of resolution of singularities, we prove the existence of the short exact sequence (\cref{C-1Shortexact})
		\[
		0 \to \nu_{\mathcal{X}}(n) \to Z\Omega^{+,n}_{\mathcal{X}/k} \xrightarrow{C - 1} \Omega^{+,n}_{\mathcal{X}/k} \to 0.
		\]
		The key input is that $\Omega^{+,n}(\Spa(X, \bar{X})) = \Omega^{n,\log}(X, \bar{X})$ by the main theorem of \cite{LogDiff} whenever $(X, \bar{X})$ is a log smooth log scheme. It is well-known that $\Omega^{n,\log}(X, \bar{X})$ fits into such a short exact sequence (see \cref{C-1ShortExactLog}).

			Note that the analogous sequence for $\Omega^n$ is not exact: the preimage of $f \in \mathcal{O}_X(X)$ under $C - 1$ lies in $\mathcal{O}_Y(Y)$, where $Y = \Spec_{\mathcal{O}_X} \mathcal{O}_X[T]/(T^p - T - f)$ is an Artin--Schreier cover of $X$. Such covers are not tamely ramified when $f$ is not invertible on $\bar{X}$.
					
			\item Prove that $Ri^!\Omega^{+,n}_{\mathcal{X}/k}\cong \Omega^{+,n-r}_{\mathcal{Z}/k}[-r]$ (\cref{ValuesOfRi!Omega+}) and \break $Ri^!Z\Omega^{+,n}_{\mathcal{X}/k}\cong Z\Omega^{+,n-r}_{\mathcal{Z}/k}[-r]$ (\cref{ValueOfRi!ZOmega}).
		
		 To achieve this, we do the following: we check that the tame cohomology of $\Omega^{+,n}$ and $Z\Omega^{+,n}$ is equal to the cohomology of the underlying topological space (\cref{StronglyEtaleEqualOpen}). For $\mathcal{F}=\Omega^{+,n}$ and $\mathcal{F}=Z\Omega^{+,n}$ we prove that  
		\[H^i(\Spa(X,S),\mathcal{F})\cong \textrm{colim}_{\bar{Y}} H^i(\bar{Y},\mathcal{F}|_{\bar{Y}}),
		\]
		where $\bar{Y}$ are $S$-compactifications of $X$.
		
		Assuming resolution of singularities, we may compute this using a system of $\bar{Y}$'s for which $\Omega^{+,n}_{\Spa(Y,\bar{Y})}|_{\bar{Y}}=\Omega^{n,\log}_{(Y,\bar{Y})}$ holds. We finally use the vanishing of the higher cohomology groups of $\Omega^{n,\log}_{(Y,\bar{Y})}$ when $X$ and $S$ are affine (\cref{AcyclicityProposition}) to deduce the vanishing of $R^si^!\Omega^{+,n}$ and $R^si^!Z\Omega^{+,n}$ for $s\neq \textrm{codim}_X(Z)$.
		
		\item Apply the long exact sequence of $Ri^!$ to obtain the diagram \refeq{FinalDiagram}
		\[
		\begin{tikzcd}
			0\ar[r] &\nu_{\mathcal{Z}}(n-r)\ar[r]\arrow[dotted,d]&Z\Omega_{\mathcal{Z}}^{+,n-r}\ar[r,"C-1"]\arrow[d,"\cong"] &\Omega^{+,n-r}_{\mathcal{Z}}\ar[d,"\cong"]\ar[r]&0\\
			0\ar[r] &R^ri^!\nu_{\mathcal{X}}(n)\ar[r]&R^ri^!Z\Omega_{\mathcal{X}}^{+,n}\ar[r,"C-1"]&R^ri^!\Omega_{\mathcal{X}}^{+,n}\ar[r]&0,
		\end{tikzcd}
		\] and conclude that 
		\[
		\nu_{\mathcal{Z}}(n-r)[-r]\to Ri^!\nu_{\mathcal{X}}(n)
		\]
		is an isomorphism.
		\item Use the short exact sequence in \cref{highermSES}
		\[
		0\to \nu_{\mathcal{Z},m}(n)\to \nu_{\mathcal{Z},m+1}(n)\to \nu_{\mathcal{Z},1}(n)\to 0,
		\] 
		to deduce by induction the isomorphism
		\[
		\nu_{\mathcal{Z},m}(n-r)[-r]\to Ri^!\nu_{\mathcal{X},m}(n).\qedhere
		\]
	\end{enumerate}
\end{proof}
Finally, following the work of Shiho in \cite{Shiho}, we generalize \cref{MainTheorem} to regular pairs over basis of characteristic $p$.
\begin{corollary}(cf. \cref{regularCase})
		Let $i: Z\to X$ be a closed immersion of  schemes over a  scheme $S$ of characteristic $p$. Assume that $(X,Z)$ is a regular pair of pure codimension $c$. Then  in $D((Z/S)_t)$ we have the isomorphism 
	\[
	Ri^!\nu_{m,X}(n)\cong \nu_{m,Z}(n-r)[-r]
	\]
	under the assumption of resolution of singularities.
\end{corollary}
\begin{proof}[Sketch of the proof]
	Apply Popescu theorem (\cref{PopescuTheorem}) in order to reduce to the smooth case over $\mF_p$, which is solved by \cref{MainTheorem}.
\end{proof}

\paragraph{Acknowledgment}
I thank Katharina Hübner for her ideas and the endless discussions about every detail of this article. I also thank Piotr Achinger and his group, whom I visited in Warsaw, and who helped me clarify some of the log geometric aspects of this work. I also thank Alberto Merici and Morten Lüders for sharing and explaining their respective work on the matter.
\section{Logarithmic schemes and their differentials}
Logarithmic schemes provide a natural framework for defining logarithmic differentials, which arise in the study of purity for $p$-torsion sheaves in characteristic $p$. This section recalls the basic definitions and properties of logarithmic schemes and their differentials, and discusses normal crossing divisors and the acyclicity of blow-ups, which are key tools for our proof of purity.

For more details on logarithmic schemes and their differentials, we refer the reader to \cite{Ogus}.
\subsection{Logarithmic schemes}
In this paper, a monoid is assumed to be commutative and to have a unit element. 
\begin{definition}\label{def:LogScheme}
	Let $X$ be a scheme. A \textit{prelog structure} on $X$ is a morphism of sheaves of monoids
	\[
	\alpha_X:\mathcal{M}_X\to \mathcal{O}_X
	\]
	on the small \'etale site $X_\et$, where $\mathcal{O}_X$ is considered as a multiplicative monoid.  We say that $\alpha_X$ is a \emph{log structure} if 
	\[
	\alpha_X^{-1}(\mathcal{O}_X^{\times})\xrightarrow{\alpha_X} \mathcal{O}_X^{\times}
	\]
	is an isomorphism. A log scheme is a pair $(X,\mathcal{M}_X)$ where $X$ is a scheme and $\alpha_X:\mathcal{M}_X\to \mathcal{O}_X$ is a log structure. Here the morphism $\alpha_X$ is a part of the structure but we generally choose to omit it from the notation.
	
	A morphism $(X,\mathcal{M}_X)\to (Y,\mathcal{M}_Y)$ of log schemes is a pair $(f,f^{\flat})$ where $f:X\to Y$ is a morphism of schemes and $f^{\flat}:f^{-1}\mathcal{M}_Y\to \mathcal{M}_X$ is a morphism of monoids which respects the log structure. 
	
	To each prelog structure $\alpha_X:\mathcal{M}_X\to \mathcal{O}_X$ we define the associated log structure as the pushout  $\mathcal{M}^\llog_X:=\mathcal{M}_X\bigoplus_{\alpha_X^{-1}(\mathcal{O}_X^{\times})}\mathcal{O}_X^{\times}$ together with the natural morphism $\tilde{\alpha}_X:\mathcal{M}^\llog_X\to \mathcal{O}_X$.   The log structure $\tilde{\alpha}_X$ is called the \emph{logification} of the prelog structure $\alpha_X$.

	A morphism of logarithmic schemes $Y\to X$ is \emph{strict}, if the log structure of $Y$ is the pullback of the log structure of $X$, i.e. $\mM_Y= f^*\mM_X$ which is the logification of the prelog structure $f^{-1}\mM_X\to f^{-1}\mM_X\to \mM_Y$.
\end{definition}

\begin{example}
	Let $X$ be a scheme and $U\subseteq X$ an open subscheme. Then we define a log structure, called the \textit{compactifying log structure}
	\[
	\alpha_{U/X}:\mathcal{M}_{U/X}\to \mathcal{O}_X
	\]
	as follows. For every étale morphism $V\to X$ we set
	\[
	\mathcal{M}_{U/X}(V)=\{f\in\mathcal{O}_X(V)\mid f|_{U\times_X V}\in\mathcal{O}_X(U\times_X V)^\times\},
	\]
	and $\alpha_{U/X}$ is the natural inclusion $\mathcal{M}_{U/X}(V)\hookrightarrow\mathcal{O}_X(V)$.
\end{example}



\begin{notation}
	Let $X$ be a log scheme. Denote the underlying scheme by $X^\circ$. Define
	\[
	X_{\textrm{tr}}:= \{x\in X^\circ| \mM_{X,x}= \mO_{X,x}^\times\}.
	\]
\end{notation}

\subsection{Logarithmic differentials}
Let $f:X\to S$ be a morphism of prelog schemes. Denote by $\alpha:\mathcal{M}_X\to \mathcal{O}_X$ the structure morphism of $X$. Let $\mathcal{E}$ be an $\mathcal{O}_X$-module. An \textit{$S$-derivation} from $(\mathcal{O}_X,\mM_X)$ to $\mathcal{E}$ is a pair $(d,\dlog)$ of maps, where $d:\mathcal{O}_X\to \mathcal{E}$ is an $S$-derivation and $\dlog:\mathcal{M}_X\to \mathcal{E}$ is a morphism of monoids satisfying:
\begin{itemize}
	\item $d(\alpha(m))=\alpha(m)\dlog(m)$ for each section $m\in \mathcal{M}_X$,
	\item $\dlog(f^\flat(m))=0$ for each section $m\in \mathcal{M}_S$.
\end{itemize}

Note that if $f:X\to S$ is a morphism of log schemes induced by a morphism of prelog schemes $X^{\mathrm{pre}}\to S^{\mathrm{pre}}$, then the derivations are the same. 

\begin{lemma}\label{DefinitionDifferentials}\cite[Chapter IV, Theorem 1.2.4]{Ogus}
	Let $f:X\to S$ be a morphism of prelog schemes. The functor associating to an $\mathcal{O}_X$-module $\mathcal{E}$ the set of $S$-derivations is represented by the $\mathcal{O}_X$-module of \textit{logarithmic Kähler differentials} $\Omega^{\log}_{X/S}$, together with the \textit{universal $S$-derivation} $(d,\dlog)$.
\end{lemma}

\begin{proof}[Proof Idea.]
	The module $\Omega^{\log}_{X/S}$ can be described as the quotient of  
\[
\Omega_{X^{\circ}/S^{\circ}} \bigoplus (\mO_X\otimes_\mbZ \mM_X^\gp)
\]
by the submodule generated by all elements of the form:
 	\begin{itemize}
 		\item $d(\alpha_X(a))-\alpha_X(a)\otimes a$ where $a$ is a section of  $\mathcal{M}_X$
 		\item $0+1\otimes f^\flat (b)$ where $b$ is a section  of $\mathcal{M}_S$.\qedhere
 	\end{itemize}  
\end{proof}
 	Elements of the form $0+1\otimes a$ are denoted by $d\log a$ for a section $a$ of $\mathcal{M}_X$. The first equivalence relation should tautologically correspond to 
\[
\dlog(x)=dx/x.
\]
\begin{notation}
	\begin{itemize}
		\item 	For simplicity, we write $\Omega_{X/S}$ instead of $\Omega_{X^\circ/S^\circ}$.
		\item 	Let $X$ be a scheme over a base scheme $S$. Let $D\subset X$ be a normal crossing divisor. Endow $X$ with the compactifying log structure for $X\backslash D$. Then we sometimes write $\Omega_{X/S}(\llog D)$ instead of $\Omega^{\llog}_{X/S}$.
	\end{itemize}
\end{notation}
\begin{example}[{\cite[Chapter IV, Example 1.2.17.]{Ogus}}]
	Let $f:X\to Y$ be a strict morphism. Then $\Omega^{\log}_{X/Y}\cong \Omega_{X/Y}$. 
\end{example}
\subsection{Normal Crossing Divisors}
Let $D=D_1\cup \cdots \cup D_r$ be a strict normal crossing divisor (snc divisor) on a scheme $X$ \cite[Chapter III, Definition 1.8.1]{Ogus}. We consider the compactifying log structure on $X$ which is associated to the open subscheme $X\backslash D$. By definition, the irreducible components $D_i$ are locally cut out by certain $f_i\in \mathcal{O}_X$, this log structure is locally defined as the logification of the prelog structure:
\[
\bigoplus_{i=1}^r \mathbb{N}\to \mathcal{O}_X,\; (n_1,\dots ,n_r)\mapsto (f_1^{n_1} \cdots f_r^{n_r}).
\]
In this subsection we are interested in properties of such log structures and their differentials.
\begin{lemma}[{\cite[Chapter IV, Corollary 1.2.8]{Ogus}}]
	Let $S$ be a scheme, and let $X$ be a smooth scheme over $S$ endowed with the log structure defined by an snc divisor $D=\bigoplus_{i=0}^nD_i$. Then the sheaf $\Omega_{X}(\textrm{log}(D))$ is a coherent $\mathcal{O}_X$-module.
\end{lemma}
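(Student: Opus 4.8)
The plan is to reduce to an explicit local model and then read off the conclusion from the presentation of $\Omega_X(\log D)$ given in Lemma \ref{DefinitionDifferentials}. Since coherence of an $\mathcal{O}_X$-module can be checked étale-locally, I would fix a geometric point $x$ of $D$, say lying on the components $D_1, \dots, D_k$ of $D$, and pass to an étale neighbourhood $U$ of $x$ on which each $D_i$ is cut out by a single function $f_i \in \mathcal{O}_X(U)$. Because $X/S$ is smooth of relative dimension $m$ near $x$ and $D$ is snc — so each $D_i$ is $S$-smooth and the $D_i$ cross transversally — after shrinking $U$ I can extend $f_1, \dots, f_k$ to functions $f_1, \dots, f_m \in \mathcal{O}_X(U)$ for which $df_1, \dots, df_m$ is an $\mathcal{O}_U$-basis of $\Omega_{X/S}|_U$. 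By the local description of the compactifying log structure of an snc divisor given above, the log structure of $X$ is then given on $U$ by the chart $\mathbb{N}^k \to \mathcal{O}_U$, $e_i \mapsto f_i$.

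Next I would invoke the remark preceding Lemma \ref{DefinitionDifferentials}: the logarithmic differentials attached to this chart, viewed as a prelog structure, agree with $\Omega_X(\log D)|_U$. Lemma \ref{DefinitionDifferentials} then presents $\Omega_X(\log D)|_U$ as the quotient of the coherent (indeed locally free of finite rank) $\mathcal{O}_U$-module $\Omega_{X/S}|_U \oplus (\mathcal{O}_U \otimes_{\mathbb{Z}} \mathbb{Z}^k)$ by the submodule $N$ of relations. Writing $\dlog f_i$ for $1 \otimes e_i$, the relation attached to a section $(n_1, \dots, n_k) \in \mathbb{N}^k$ is $d(f_1^{n_1}\cdots f_k^{n_k}) - (f_1^{n_1}\cdots f_k^{n_k}) \otimes (n_1, \dots, n_k)$, and since $d$ is a derivation and the tensor is $\mathbb{Z}$-bilinear all of these already lie in the submodule generated by the $k$ elements $df_i - f_i\,\dlog f_i$; the relations coming from $\mathcal{M}_S$ contribute nothing when $S$ carries the trivial log structure, and only finitely many further generators otherwise. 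Hence $N$ is finitely generated, so $\Omega_X(\log D)|_U$ is a quotient of a coherent module by a finitely generated submodule, hence coherent; coherence being local, this proves the lemma.

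I do not expect a serious obstacle; the one point that needs genuine care is the reduction to the local model, where smoothness of $X/S$ together with the \emph{strict} normal crossing (rather than merely normal crossing) hypothesis is precisely what allows the local equations $f_i$ of the $D_i$ to be completed to a relative coordinate system. As a byproduct, the same computation gives the sharper statement that will be needed later: the relation $df_i = f_i\,\dlog f_i$ lets one discard $df_1, \dots, df_k$ from the generating set, so $\Omega_X(\log D)|_U$ is free on $\dlog f_1, \dots, \dlog f_k, df_{k+1}, \dots, df_m$, i.e. $\Omega_X(\log D)$ is locally free of rank $m = \dim(X/S)$. Everything past the reduction step is just the formal stability of coherence under passing to finitely presented quotients.
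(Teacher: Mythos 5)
The paper does not actually prove this lemma: it is quoted verbatim from Ogus \cite[Corollary IV.1.2.8]{Ogus} with no argument given, so there is no internal proof to compare against. Your chart-based argument is correct and is essentially the standard proof underlying the cited result. The reduction is sound: coherence may be checked \'etale-locally, the compactifying log structure of an snc divisor is (as the paper itself records at the start of the Normal Crossing Divisors section) locally given by the chart $\mathbb{N}^k\to\mathcal{O}_U$, $e_i\mapsto f_i$, and the remark preceding Lemma \ref{DefinitionDifferentials} lets you compute derivations, hence $\Omega_X(\log D)|_U$, from this prelog chart, replacing $\mathcal{O}_U\otimes_{\mathbb{Z}}\mathcal{M}_X^{\gp}$ by the free module $\mathcal{O}_U\otimes_{\mathbb{Z}}\mathbb{Z}^k$. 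Your computation that the relation attached to $(n_1,\dots,n_k)$ equals $\sum_i n_i\,(f^{\,n}/f_i)\bigl(df_i-f_i\,\dlog f_i\bigr)$, so that the relation submodule is generated by the $k$ elements $df_i-f_i\,\dlog f_i$, is right, and finite presentation of the quotient gives coherence (with ``coherent'' read, as in the source, as finitely presented; this caveat concerns the statement, not your proof). The only loose end is the closing ``byproduct'': discarding $df_1,\dots,df_k$ shows that $\dlog f_1,\dots,\dlog f_k,df_{k+1},\dots,df_m$ \emph{generate} $\Omega_X(\log D)|_U$, but freeness on these sections requires an extra step (for instance, exhibiting the natural map to $j_*\Omega_{(X\setminus D)/S}$ and checking independence there, or invoking log smoothness of $(X,\mathcal{M}_X)\to S$). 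Since the lemma only claims coherence, this does not affect the validity of your proof.
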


The following proposition describes the structure of logarithmic differentials $\Omega_X(\textrm{log}(D))$ when $D$ is an snc divisor. This is a well-known result, but we could not find a reference for it in the literature, except for the case where $S=\Spec(k)$ and $k=k^{\textrm{alg}}$ is algebraically closed. 
\begin{proposition}[{Residue sequences, cf. \cite[Properties 2.3]{Esnault}}]\label{ResidueSequences}
	Let $X$ be a smooth scheme over $S$. We endow it with the log structure defined by an snc divisor $D=D_1\cup \dots \cup D_n$. Let $a>0$ be an integer. Define $D^1:=D-D_1$. We have the following short exact sequences 
	\begin{equation*}
	\begin{aligned}
		0 &\to \Omega_{X/S} \to \Omega_{X/S}(\textrm{log}(D)) \to \bigoplus_{i=1}^n\mathcal{O}_{D_i} \to 0,\\
		0 &\to \Omega^a_{X/S}(\textrm{log}(D - D_1)) \to \Omega^a_{X/S}(\textrm{log}(D)) \to \Omega_{D_1/S}^{a-1}(\textrm{log}(D^1)|_{D_1}) \to 0,\\
	\end{aligned}
	\end{equation*}
\end{proposition}
\begin{remark}
	We wrote $\mathcal{O}_{D_i}$ and $\Omega_{D_i/S}^{a}$ instead of $i_*\mathcal{O}_{D_i}$ and $i_*\Omega_{D_i/S}^{a}$, where $i:D_i\to X$ is the given closed immersion. This is done to simplify the notation.
\end{remark}
\begin{proof}
	We follow the proof in the reference. The condition that $S$ is the spectrum of an algebraically closed field, as in loc. cit., may be dropped.	

	For the first sequence define the map locally as 
	\begin{align*}
			&\alpha:\Omega_{X/S}(\textrm{log}(D))\to \bigoplus_{i=1}^n\mathcal{O}_{D_i}	\\
			 &a\alpha(dg)+\alpha(\sum_{i=1}^{n}a_i\dlog  (f_i))=\sum_{i=1}^na_i|_{D_i},
\end{align*}	
	where the $f_i$'s locally define the divisors $D_i$. Note that this is well defined and glues to a global map: given other equations $f_i'$ defining $D_i$, we have $$\alpha(a_i\dlog   f_i)=\alpha(a_i \dlog  f_i'+a_i\dlog (f_i/f_i'))=a_i\in \mathcal{O}_{D_i}.$$
	The morphism $\alpha$ is obviously surjective and $\textrm{ker}(\alpha)$ is $\Omega_{X/S}$.
	
	Let $\eta$ be a section of $\Omega_{X/S}^a(\llog D)$. Then $\eta$ is locally a sum of elements of the form 
	\[
	\eta_1 + \dlog   f_1\wedge \eta_2,
	\]
	where $\eta_1$ is a section of  $\Omega_{X/S}^a(\llog (D- D_1))$ and $\eta_2$ is a section of $\Omega_{X/S}^{a-1}(\log (D-D_1))$. We define the map
	\[
	\phi:\Omega_{X/S}^a(\llog D)\to \Omega^{a-1}_{D_1/S}(\llog(D-D_1)|_{D_1}), \quad \eta_1 + \dlog   f_1 \wedge \eta_2 \mapsto \eta_2|_{D_1}.
	\]
	 The map $\phi$ is well defined and independent from the representative $f_1$. Indeed, let $f_1'$ be another local generator of $\mathcal{O}(-D_1)$. Then 
	 \[
	 \eta_1+\dlog  (f_1)\wedge \eta_2= \big(\eta_1+\dlog  (\frac{f_1'}{f_1})\wedge \eta_2 \big)+ \log (f_1')\wedge \eta_2.
	 \]
	  The map $\phi$ is surjective and the elements of the kernel are locally of the form $\eta=\eta_1$. Hence $\ker(\phi)=\Omega_{X/S}^a(\llog( D-D_1))$.
	
\end{proof}
\begin{corollary}\label{ResidueSequenceClosedForms}
	For the same setup as in \cref{ResidueSequences} we have the short exact sequence of closed forms
	\[
	0\to Z\Omega_{X/S}^a(\llog(D-D_1))\to Z\Omega_{X/S}^a(\llog(D))\to Z\Omega^{a-1}_{D_1/S}(\llog(D-D_1)|_{D_1})\to 0.
	\]
\end{corollary}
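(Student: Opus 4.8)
The plan is to deduce the sequence by restricting the second short exact sequence of Proposition \ref{ResidueSequences},
\[
0\to \Omega^a_{X/S}(\llog(D-D_1))\xrightarrow{\iota} \Omega^a_{X/S}(\llog D)\xrightarrow{\phi_1}\Omega^{a-1}_{D_1/S}(\llog(D-D_1)|_{D_1})\to 0,
\]
to the subsheaves of closed forms. The first step is to check that the residue map anticommutes with the de Rham differential. Locally, writing $D_1=V(f_1)$ and a log form as $\eta=\eta_1+\dlog f_1\wedge\eta_2$ with $\eta_1\in\Omega^a_{X/S}(\llog(D-D_1))$ and $\eta_2\in\Omega^{a-1}_{X/S}(\llog(D-D_1))$, one has $d\eta=d\eta_1-\dlog f_1\wedge d\eta_2$, and since $\phi_1$ kills the part with no pole along $D_1$ while $d\eta_1$ again has no pole along $D_1$, this gives $\phi_1(d\eta)=-(d\eta_2)|_{D_1}=-d(\phi_1\eta)$. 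In particular $\phi_1$ carries closed forms to closed forms, so the restricted maps are defined (and the residue sequence becomes a short exact sequence of de Rham complexes, up to a sign on the differential of the quotient).

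Left-exactness of
\[
0\to Z\Omega^a_{X/S}(\llog(D-D_1))\to Z\Omega^a_{X/S}(\llog D)\to Z\Omega^{a-1}_{D_1/S}(\llog(D-D_1)|_{D_1})
\]
is then essentially formal: $\iota$ stays injective, and because the differential on $\Omega^\bullet_{X/S}(\llog(D-D_1))$ is induced by that of $\Omega^\bullet_{X/S}(\llog D)$, a section lying in $\ker\phi_1$ is closed in the smaller complex if and only if it is closed in the larger one. This identifies the kernel of $Z\Omega^a_{X/S}(\llog D)\to Z\Omega^{a-1}_{D_1/S}(\llog(D-D_1)|_{D_1})$ with $Z\Omega^a_{X/S}(\llog(D-D_1))$.

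The substantive point is surjectivity, where one cannot simply invoke exactness of the log de Rham complexes (which fails, e.g.\ in characteristic $p$). Instead I would use the local direct-sum decomposition
\[
\Omega^{b}_{X/S}(\llog D)=\Omega^{b}_{X/S}(\llog(D-D_1))\ \oplus\ \dlog f_1\wedge\Omega^{b-1}_{X/S}(\llog(D-D_1)),
\]
coming from the explicit local basis of log forms (smoothness of $X/S$ together with the snc hypothesis, as in the proof of Proposition \ref{ResidueSequences}). Given a local closed section $c$ of $\Omega^{a-1}_{D_1/S}(\llog(D-D_1)|_{D_1})$, lift it along the surjection $\phi_1$ to $b\in\Omega^a_{X/S}(\llog D)$ with $\phi_1(b)=c$. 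Then $\phi_1(db)=-dc=0$, so $db$ lies in $\ker\phi_1=\Omega^{a+1}_{X/S}(\llog(D-D_1))$. Writing $b=b_1+\dlog f_1\wedge b_2$ with $b_1$ free of poles along $D_1$, we get $db=db_1-\dlog f_1\wedge db_2$, with $db_1$ in the first summand and $\dlog f_1\wedge db_2$ in the second; since $db$ sits in the first summand, the second component vanishes, so $db=db_1$. Hence $\eta:=b-b_1=\dlog f_1\wedge b_2$ satisfies $d\eta=db-db_1=0$ and $\phi_1(\eta)=\phi_1(b)=c$, which is the desired closed preimage. I expect this step — producing a \emph{closed} lift, i.e.\ showing that the obstruction to surjectivity on cocycles vanishes even though the kernel complex is not acyclic — to be the main obstacle, and it is exactly what the direct-sum bookkeeping resolves.
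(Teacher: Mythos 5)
Your overall strategy matches the paper's: both reduce the corollary to showing that a closed section of $\Omega^{a-1}_{D_1/S}(\llog(D-D_1)|_{D_1})$ admits a \emph{closed} lift along the residue map (the paper phrases this as the vanishing of the snake-lemma connecting map into $B\Omega^{a+1}_{X/S}(\llog(D-D_1))$), and your treatment of compatibility with $d$ and of left-exactness is fine. The gap is in the mechanism you use to produce the closed lift. The decomposition
\[
\Omega^{b}_{X/S}(\llog D)=\Omega^{b}_{X/S}(\llog(D-D_1))\ \oplus\ \dlog f_1\wedge\Omega^{b-1}_{X/S}(\llog(D-D_1))
\]
is \emph{not} a direct sum: the two summands intersect in $df_1\wedge\Omega^{b-1}_{X/S}(\llog(D-D_1))$, since $df_1=f_1\,\dlog f_1$ lies in both $\Omega^1_{X/S}(\llog(D-D_1))$ and $\dlog f_1\wedge\mathcal{O}_X$. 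Consequently the inference ``$db$ lies in the first summand, hence the second component $\dlog f_1\wedge db_2$ vanishes'' is invalid; from $db\in\Omega^{a+1}_{X/S}(\llog(D-D_1))$ you only get $\dlog f_1\wedge db_2\equiv 0$ modulo $f_1$, which is just $dc=0$ again and carries no new information.

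The failure is real, not merely a presentational issue, because the decomposition $b=b_1+\dlog f_1\wedge b_2$ is not unique and your conclusion depends on the choice. Concretely, take $X=\mathbb{A}^2_{\mathbb{F}_p}=\Spec\,\mathbb{F}_p[T_1,T_2]$, $D=D_1=V(T_1)$, $a=1$, and the closed section $c=T_2^{\,p}\in Z\Omega^0_{D_1}$. The lift $b=(T_2^{\,p}+T_1T_2)\,\dlog T_1$ satisfies $\phi_1(b)=c$, and choosing $b_1=0$, $b_2=T_2^{\,p}+T_1T_2$ is a legitimate instance of your decomposition; but then $\dlog T_1\wedge db_2=dT_1\wedge dT_2\neq 0$ and $\eta=b$ is not closed. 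What is actually needed is a normalization of the lift, which is exactly what the paper's proof supplies: working in local coordinates $X\cong\mathbb{A}^m_S$ with $D_i=V(T_i)$, one insists that the coefficients of $b_2$ (a polynomial expression) contain no monomial divisible by $T_1$. With that constraint, the conclusion ``$d(\text{coefficient part})$ is divisible by $T_1$'' forces it to be zero, and the chosen lift is genuinely closed. Your argument becomes correct once you add this normalization step (in my example it replaces $b_2$ by $T_2^{\,p}$); without it, the surjectivity onto $Z\Omega^{a-1}_{D_1/S}(\llog(D-D_1)|_{D_1})$ is not established.
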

\begin{proof}
	The morphisms in the second exact sequence in \cref{ResidueSequences} commutes with $d$. We have therefore the morphism of short exact sequences
	\[
	\begin{tikzcd}[column sep=1.84ex]
		0\ar[r]&\Omega^{a}_{X/S}(\llog(D-D_1))\ar[r]\ar[d,"d"]&\Omega^{a}_{X/S}(\llog(D))\ar[r]\ar[d,"d"]&\Omega^{a-1}_{D_1/S}(\llog(D-D_1)|_{D_1})\ar[r]\ar[d,"d"]&0\\
		0\ar[r]&\Omega^{a+1}_{X/S}(\llog(D-D_1))\ar[r]&\Omega^{a+1}_{X/S}(\llog(D))\ar[r]&\Omega^{a}_{D_1/S}(\llog(D-D_1)|_{D_1})\ar[r]&0.
	\end{tikzcd}
	\] 
	After applying the snake lemma, we get the sequence
	\[
	\begin{aligned}
	0\to& Z\Omega_{X/S}^a(\llog(D-D_1))\to Z\Omega_{X/S}^a(\llog(D))\to Z\Omega^{a-1}_{D_1/S}(\llog(D-D_1)|_{D_1})\\
	 \to &B\Omega^{a+1}_{X/S}(\llog(D-D_1))\to \dots.
	\end{aligned}
	\]
	It is left to check that the boundary sequence induced by the snake lemma is the zero map.  Since the claim is \'etale local and $X\to S$ is log smooth, we may assume that $S$ is affine, $X\cong \mathbb{A}^m_S$ with coordinates $T_1,\dots, T_m$ and $D_i=V(T_i)$ for each $i=1,\dots ,n$.
	We write $\delta_i:= \dlog   T_i$ for $i=1,\dots, n$ and $\delta_i:= dT_i$ for $i=n+1,\dots, m$.
	
	Let $\tilde\eta$ be a section of  $Z\Omega_{D_1/S}^{a-1}(\log(D-D_1)|_{D_1})$ and let $\eta\wedge \dlog   T_1\in \Omega^{a}_{X/S}(\log(D))$ be a preimage. We may assume that $\eta$ is a sum of elements of the form
	\[
	g_{i_1,\dots, i_{a-1}} \delta_{i_1}\wedge \dots \wedge \delta_{i_{a-1}}, \quad i_1,\dots,i_{a-1}\neq 1, \; T_1\not| g_{i_1,\dots,i_{a-1}},
	\]
	where 
	\[
	g_{i_1,\dots, i_{a-1}}\in \mathcal{O}_X(X) =\mathcal{O}_S(S)[T_1,\dots, T_{a-1}].
	\]
	We may also assume that all monomials of the polynomials $g_{i_1,\dots, i_{a-1}}$ are not divisible by $T_1$, since such monomials are trivial when restricting to $D_1$.
	 
	Since $d\tilde\eta=0$, the differential $d(\eta\wedge \dlog   T_1)=d\eta \wedge \dlog   T_1$ lies in $\Omega^{a+1}_{X/S}(\llog (D-D_1))$. Hence $d\eta$ is necessarily divisible by $T_1$. But all monomials of $g_{i_1,\dots, i_{a-1}}$ are not divisible by $T_1$, implying that $d\eta=0$. 
\end{proof}

The vanishing result in \cref{AcyclicityProposition} is a key ingredient in the proof of the main theorem.
 \begin{proposition}[{Acyclicity of Blow-ups \cite[Theorem 6.1]{KelleyCohomology} }] \label{AcyclicityProposition}
 		Let $k$ be a perfect field of positive characteristic.
 		Let $X=\Spec(A)$ be a connected affine log smooth log scheme over $k$ whose log structure is defined by a normal crossing divisor $D\subset X$. Let $f:\bar{X}\to X=\Spec(A)$ be a blowup in a smooth center $Z\subset D$, such that the exceptional divisor $E$ intersects the strict transform $\bar{D}$ of the divisor $D$ transversally. Then \[H^i(\bar{X},\Omega^{n}_{\bar{X}/k}(\llog(E+\bar{D})))=0 \quad  \textrm{for } \quad i>0, n\geq 0.\]
 \end{proposition}

\section{Cohomology of discretely ringed adic spaces}\label{ChapterAdic}
\subsection{Generalities about adic spaces}
\subsubsection*{Discretely ringed adic spaces}
We will use adic spaces as defined in \cite[§1.1.]{HuberEtale}. An excellent overview on the subject may also be found in Wedhorn's book \cite{WedhornAdicSpaces}. We are only concerned with adic spaces which are locally of the form $\Spa(A,A^+)$, where $A$ has the discrete topology. We call such adic spaces \textit{discretely ringed}. 
\begin{definition}
Let $X\to S$ be a morphism of schemes. We define the set $\Spa(X,S)$ as the set of triples $P=(x,v,\epsilon)$ where $x$ is a point of $X$, $v$ an equivalence class of valuations on $k(x)$, and $\epsilon:\Spec(\mathcal{O}_v)\to S$ is a morphism fitting into the following diagram
\[
\begin{tikzcd}
	\Spec(k(x))\ar[r,"x"]\ar[d]&X\ar[d]\\
	\Spec(\mathcal{O}_v)\ar[r,"\epsilon"]&S.
\end{tikzcd}
\]
\end{definition}

For a given discretely ringed Huber pair $(A,A^+)$ 
\[
\Spa(\Spec(A),\Spec(A^+))\cong \Spa(A,A^+),
\]
by \cite[Lemma 3.1.2]{TemkinRZ}.

 We endow $\Spa(X,S)$ with the topology induced by open subsets of the form $\Spa(A,A^+)$ fitting into the diagram
 \[
 \begin{tikzcd}
 	\Spec(A)\ar[r,hook]\ar[d]& X\ar[d]\\
 	\Spec(A^+)\ar[r]&S
 \end{tikzcd}
 \]
 where $\Spec(A)\hookrightarrow X$ is an open immersion and $\Spec(A^+)\to S$ is of finite type and $(A,A^+)$ is a Huber pair. This also endows $\Spa(X,S)$ with the structure of an adic space. To elaborate, $\Spa(X,S)$ is the adic space $\mathcal{X}$ associated to the triple
 \[
 (\Spa(X,S),\mathcal{O}_{\mathcal{X}},(v)_{(x,v,\epsilon)})
 \]
 where $\mathcal{O}_{\mathcal{X}}(\Spa(A,A^+)):=A$.
 The assignment
 \[
 \Spa: \big\{\textrm{morphism of schemes} \big\}\to \big\{\textrm{Discretely ringed adic spaces}\big\}
 \]
 is functorial \cite[Lemma 2.1]{HTame}.

\subsection{The Riemann-Zariski open site}
 \subsubsection*{Horizontal specializations}
Let $\mathcal{X}$ be a discretely ringed adic space. We say that a point $y \in \mathcal{X}$ specializes to $x \in \mathcal{X}$ if every open neighborhood of $x$ contains $y$. We recall the definition of horizontal (or primary) specializations.
\begin{definition}[horizontal specialization]
	Let $\mathcal{X}=\Spa(A,A^+)$ be a discretely ringed  affinoid adic space. A point of $\mathcal{X}$ is a valuation $v:A\to \Gamma_{v}\cup \{0\}$ satisfying $v(a)\leq 1$ for all $a\in A^+$. Consider a convex subgroup $H\subset \Gamma_v$ which bounds $A$, i.e. for all $a\in A$, there exists $h\in H$ s.t. $v(a)\leq h$. One defines
	\[
	v|H:A\to H\cup \{0\}: a\mapsto \begin{cases}
		v(a) &\textrm{if }v(a)\in H, \\ 0 &\textrm{ otherwise.}
	\end{cases}
	\]
	This is a specialization of $v$. Such specializations are called \textit{horizontal} (or primary).
	
    A specialization $y \succ x$ in a discretely ringed adic space $\mathcal{Y}$ is horizontal if there exists an affinoid open set containing $x$ for which $y \succ x$ is horizontal.

    A point $x \in \mathcal{Y}$ is called \textit{Riemann-Zariski} (RZ) if it has no nontrivial horizontal specialization.
\end{definition}
The set of Riemann-Zariski points is called the Riemann-Zariski space. For a separated morphism $X\to S$ of qcqs schemes, we define the topological space
 \[\RZ(X,S)=\{\textrm{Riemann-Zariski points of } \Spa(X,S)\},\]
together with the underlying topology of $\Spa(X,S)$

\subsubsection*{Comparison between the open and Riemann-Zariski open sites}

 Consider the following morphisms
\[
\RZ(X,S)\xrightarrow{\iota}\Spa(X,S)\xrightarrow{\pi}\RZ(X,S).
\]
defined by Temkin \cite{TemkinRZ}. The first morphism is the inclusion, which is a topological embedding. The second morphism sends $x\in \Spa(X,S)$ to its RZ-point $x_\RZ$, which is the maximal horizontal specialization of $x$. The composition $\pi\circ \iota$ is the identity on $\RZ(X,S)$. In particular, $\RZ(X,S)$ is a retract of $\Spa(X,S)$.

The next proposition is featured in greater generality in \cite [Corollary 5.7]{HubnerBaseChange}
\begin{proposition}\label{OpenEqualRZ}
	Let $X\to S$ be a separated morphism of qcqs schemes. We have a natural isomorphism of cohomology groups 
	\[
	H^i(\Spa(X,S),\mathcal{F})\cong H^i(RZ(X,S),\pi_*\mathcal{F}).
	\]
\end{proposition}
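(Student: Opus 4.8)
The plan is to realise $\tilde\pi$ as the composite
$\Spa(X,S)\xrightarrow{\pi}\RZ(X,S)\xrightarrow{\gamma}\Spa(X,S)_\RZ$,
where $\gamma$ is the isomorphism of topoi from Lemma \ref{LemmaOpenRZ} and $\pi$ is Temkin's retraction. Then $H^i(\Spa(X,S)_\RZ,\mathcal F_\RZ)=H^i(\RZ(X,S),\pi_*\mathcal F)$, and the Leray spectral sequence
$E_2^{p,q}=H^p(\RZ(X,S),R^q\pi_*\mathcal F)\Rightarrow H^{p+q}(\Spa(X,S),\mathcal F)$
reduces the statement to the vanishing $R^q\pi_*\mathcal F=0$ for all $q>0$. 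So the entire content is the computation of the higher direct images of the retraction $\pi$, and I would prove this by a stalkwise argument.

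\textbf{Stalks of $R^q\pi_*$.} Both $\Spa(X,S)$ and $\RZ(X,S)$ are spectral spaces and $\pi$ is a spectral map (Temkin \cite{TemkinRZ}); hence for a quasi-compact open $\mathcal V\subseteq\RZ(X,S)$ the preimage $\pi^{-1}(\mathcal V)$ is a quasi-compact open, and $\pi^{-1}\big(\bigcap_j\mathcal V_j\big)=\lim_j\pi^{-1}(\mathcal V_j)$ for a cofiltered family of such. Since quasi-compact opens form a basis of $\RZ(X,S)$ closed under finite intersection, writing $G_x:=\bigcap_{\mathcal V\ni x}\mathcal V$ for the set of generizations of an $\RZ$-point $x$, the continuity of sheaf cohomology along cofiltered limits of spectral spaces gives
\[
(R^q\pi_*\mathcal F)_x=\colim_{\mathcal V\ni x}H^q\big(\pi^{-1}(\mathcal V),\mathcal F\big)=H^q\big(\pi^{-1}(G_x),\mathcal F|_{\pi^{-1}(G_x)}\big).
\]
Thus it suffices to show that the spectral space $\pi^{-1}(G_x)$ has no higher cohomology.

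\textbf{$\pi^{-1}(G_x)$ is a local spectral space.} I would show that $\pi^{-1}(G_x)$ has $x$ as its unique closed point and that the only open neighbourhood of $x$ in it is the whole space. Indeed $\pi\circ\iota=\id$, and by the argument in the proof of Lemma \ref{LemmaOpenRZ} the $\RZ$-point $\iota\pi(y)=y_\RZ$ is a horizontal specialization of $y$, so $y_\RZ\in\overline{\{y\}}$. If $y\in\pi^{-1}(G_x)$ then $y_\RZ=\pi(y)\in G_x$ is a generization of $x$, so $x\in\overline{\{y_\RZ\}}\subseteq\overline{\{y\}}$; hence $x$ lies in the closure of every point of $\pi^{-1}(G_x)$. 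These same relations together with the $T_0$ property force $\{x\}$ to be closed in $\pi^{-1}(G_x)$ and to be its only closed point, and consequently any open neighbourhood of $x$ in $\pi^{-1}(G_x)$ equals $\pi^{-1}(G_x)$. Therefore $\Gamma(\pi^{-1}(G_x),-)$ coincides with the stalk functor at $x$, which is exact, so $H^q(\pi^{-1}(G_x),-)=0$ for $q>0$. This kills all stalks of $R^q\pi_*\mathcal F$ for $q>0$, hence $R^q\pi_*\mathcal F=0$, and the proposition follows.

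\textbf{Main obstacle.} The delicate point is the middle step: identifying $(R^q\pi_*\mathcal F)_x$ with $H^q(\pi^{-1}(G_x),\mathcal F|_{\pi^{-1}(G_x)})$. This needs $\pi$ to be spectral, so that preimages of quasi-compact opens are quasi-compact, together with the limit theorem for sheaf cohomology on spectral spaces, both of which must be invoked carefully in this adic setting. Once that is in place, the remaining arguments are purely formal manipulations of the retraction $\pi$ and of specialization.
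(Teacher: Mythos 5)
Your proof is correct, but it takes a genuinely different route from the paper's. The paper argues globally on sections: it shows that $\pi_*$ coincides with the exact functor $\iota^*$ (restriction to the Riemann--Zariski subspace), the point being that the RZ-open $\mathcal{U}$ attached to an open $\mathcal{U}_\RZ\subseteq \RZ(X,S)$ is the \emph{smallest} open of $\Spa(X,S)$ containing $\mathcal{U}_\RZ$ (because every $x\in\mathcal{U}$ has its RZ-point $x_\RZ$ as a specialization, so any open containing $x_\RZ$ contains $x$); exactness of $\pi_*$ then kills $R^q\pi_*$ for $q>0$ with no finiteness hypotheses and no limit theorems. You instead kill the stalks of $R^q\pi_*$ at each RZ-point $x$ by the cohomology-and-limits theorem for spectral spaces, reducing to the ``tube'' $\pi^{-1}(G_x)$, which you correctly show is a local space (unique closed point $x$, whose only open neighbourhood in the tube is the whole tube), so that $\Gamma(\pi^{-1}(G_x),-)$ is the stalk functor at $x$ and is exact. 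This is in effect a pointwise incarnation of the same specialization fact the paper uses globally. What your route costs: you need $\Spa(X,S)$ and $\RZ(X,S)$ spectral (so $X\to S$ qcqs, or an extra localization step, hypotheses the paper's argument avoids entirely), plus the quasi-compact-limit machinery; also, rather than citing Temkin for spectrality of $\pi$, note that quasi-compactness of $\pi$ follows directly from Lemma \ref{LemmaOpenRZ}: $\pi^{-1}(\mathcal{U}_\RZ)=\mathcal{U}$, and any finite subfamily of an open cover of $\mathcal{U}$ whose traces cover $\mathcal{U}_\RZ$ already covers $\mathcal{U}$, again by the specialization property. What it buys: a robust fiberwise statement (cohomological triviality of the tubes $\pi^{-1}(G_x)$) that does not depend on $\pi$ being a retraction with the minimal-open property, so it would survive in situations where the paper's section-level identification $\pi_*=\iota^*$ is unavailable.
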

\begin{proof}
	It suffices to check that $R\pi_*\mathcal{F}$ is concentrated in degree zero. To prove that we check that $\pi_*$ is equal to $\iota^*$, which is exact.
	
	Let $\mathcal{U}$ be an open subset of $\RZ(X,S)$. Then $\mathcal{U}= \pi^{-1}(\mathcal{U})\cap \RZ(X,S)$. Furthermore, $\pi^{-1}(\mathcal{U})$ is the smallest open subset of $\Spa(X,S)$ containing $\mathcal{U}$: indeed, it is the set of horizontal generalizations of points in $\mathcal{U}$, and every open subset containing $\mathcal{U}$ must contain all these generalizations. 
	
	By definition, $\iota^*\mathcal{F}$ is the sheafification of
	\[
	\mathcal{U}\mapsto \textrm{colim}_{\mathcal{V}} \mathcal{F}(\mathcal{V}),
	\]
	where the colimit is taken over $\mathcal{V}$ open in $\Spa(X,S)$, such that $\mathcal{U}\to \RZ(X,S)$ factors through $\mathcal{V}\cap \RZ(X,S)$. The open $\pi^{-1}(\mathcal{U})$ is initial for such open morphisms, hence 
	\[
	\textrm{colim}_{\mathcal{V}} \mathcal{F}(\mathcal{V})= \mathcal{F}(\pi^{-1}(\mathcal{U}))=\pi_*\mathcal{F}(\mathcal{U}).
	\] 
	It follows that $\iota^*\mathcal{F}=\pi_*\mathcal{F}$, which concludes the proof.
\end{proof}
Let $X\to S$ be a separated morphism of qcqs schemes. In \cite{TemkinRZ}, Temkin defines the Riemann-Zariski space as the topological space
\begin{equation}\label{RZmaps}
	\RZ(X,S)=\lim_{X\to\bar{Y}\to S}\bar{Y},
\end{equation}
where $\bar{Y}$ ranges over all $X$-\textit{modifications} $\bar{Y}$ of $S$: these are factorizations $X\xrightarrow{f}\bar{Y}\xrightarrow{g} S$ of $X\to S$, such that $f$ is schematically dominant and $g$  is proper. We often write $\bar{Y}$ instead of $X\xrightarrow{f}\bar{Y}\xrightarrow{g} S$. He then shows that the two descriptions agree since $X\to S$ is separated \cite[Corollary 3.4.7 and Theorem 3.5.1]{TemkinRZ}.
\begin{notation}\label{Notation|Y}
	Let $\bar{Y}$ be an $X$-modification of $S$. Let $\mathcal{F}$ be a sheaf on the topological space $\Spa(X,S)=\Spa(X,\bar{Y})$. We define the notation
\[		
\mathcal{F}|_{\bar{Y}}:= \Psi_{Y,*}\mathcal{F},
\]
	where $\Psi_Y:\Spa(X,S) \xrightarrow{\pi} \RZ(X,S)\to \bar{Y}$ is the natural morphism associated to the description $ \RZ(X,S)\cong \lim_{X\to\bar{Y}\to S}\bar{Y} $ in  (\refeq{RZmaps}).
\end{notation}
We apply \cite[Chapter 0, Corollary 3.1.20]{FujiwaraFoundationsRigidGeometry} \cref{OpenEqualRZ} to obtain the following corollary.
\begin{corollary}\label{CohomologyIsColimit}
	Let $X\to S$ be a separated morphism of qcqs schemes. We have a natural isomorphism
	\[H^i(\Spa(X,S),\mathcal{F})\cong \textrm{colim}_{\bar{Y}} H^i(\bar{Y},\mathcal{F}|_{\bar{Y}})
	\]
	where the colimit is taken over all $X$-modifications $\bar{Y}$ of $S$.
\end{corollary}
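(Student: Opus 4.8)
The plan is to obtain the corollary by concatenating Proposition \ref{OpenEqualRZ} and Lemma \ref{Cohomology=Blowups} and then appealing to the continuity of sheaf cohomology along a cofiltered limit of spectral spaces.

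First I would use Proposition \ref{OpenEqualRZ} to rewrite the left-hand side: it provides a natural isomorphism $H^i(\Spa(X,S),\mathcal{F})\cong H^i(\Spa(X,S)_\RZ,\mathcal{F}_\RZ)$, so the problem is transferred to the Riemann-Zariski open site. Next, since $X\to S$ is separated and qcqs, Lemma \ref{Cohomology=Blowups} gives that $\Psi\colon\Spa(X,S)_\RZ\to\lim_{\bar Y}\bar Y$ is a homeomorphism, where $\bar Y$ ranges over the $X$-modifications of $S$; hence $H^i(\Spa(X,S)_\RZ,\mathcal{F}_\RZ)\cong H^i\!\big(\lim_{\bar Y}\bar Y,\ \Psi_*\mathcal{F}_\RZ\big)$.

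The remaining, and really the only substantive, step is to compute the cohomology of the limit space. I would first record that the $X$-modifications of $S$ form a cofiltered system: two modifications $\bar Y_1,\bar Y_2$ are both dominated by the schematic closure of the image of $X$ in $\bar Y_1\times_S\bar Y_2$, and the system admits a cofinal small subsystem (e.g.\ of blow-ups of $S$ in finitely presented centres disjoint from $X$), so that both $\lim_{\bar Y}\bar Y$ and the colimit in the statement are well defined. Each transition morphism $\bar Y'\to\bar Y$ is proper between qcqs schemes, hence induces a spectral map of spectral spaces. Writing $p_{\bar Y}\colon\lim_{\bar Y}\bar Y\to\bar Y$ for the projection, the standard continuity of cohomology for cofiltered limits of spectral spaces along spectral maps (cf.\ \cite{Stacks} and the topological arguments of \cite{TemkinRZ}) gives, for any abelian sheaf $\mathcal{G}$ on the limit,
\[
\mathcal{G}\ \cong\ \colim_{\bar Y} p_{\bar Y}^* p_{\bar Y,*}\mathcal{G}
\qquad\text{and}\qquad
H^i\!\big(\lim_{\bar Y}\bar Y,\ \mathcal{G}\big)\ \cong\ \colim_{\bar Y} H^i\!\big(\bar Y,\ p_{\bar Y,*}\mathcal{G}\big).
\]
Applying this with $\mathcal{G}=\Psi_*\mathcal{F}_\RZ$ and using $p_{\bar Y}\circ\Psi=\Psi_{\bar Y}$, so that $p_{\bar Y,*}\Psi_*\mathcal{F}_\RZ=\Psi_{\bar Y,*}\mathcal{F}_\RZ=\mathcal{F}|_{\bar Y}$ by Notation \ref{Notation|Y}, produces exactly the asserted isomorphism, with naturality inherited from that of the two inputs.

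I expect the main obstacle to be making the continuity step precise — concretely, verifying that the topos of sheaves on $\lim_{\bar Y}\bar Y$ is the $2$-limit of the topoi of the $\bar Y$, so that every sheaf on the limit is a filtered colimit of pullbacks from finite levels and cohomology commutes with that colimit. This is exactly where the hypotheses "separated" and "qcqs" enter: they force all $\bar Y$ to be qcqs (hence spectral) and all transition maps to be proper (hence spectral), which is precisely what the continuity theorem requires. Everything else in the argument is formal once Proposition \ref{OpenEqualRZ} and Lemma \ref{Cohomology=Blowups} are available.
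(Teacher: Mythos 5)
Your argument is correct and is essentially the paper's own proof: the paper likewise obtains the corollary simply by combining Proposition \ref{OpenEqualRZ} with the homeomorphism of Lemma \ref{Cohomology=Blowups}, together with Notation \ref{Notation|Y}, leaving the continuity of sheaf cohomology along the cofiltered limit of spectral spaces implicit. The only difference is that you make that continuity step (and the cofilteredness of the system of modifications) explicit, which is precisely the standard ingredient the paper takes for granted.
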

 \begin{proof}
 	By abstract nonsense \cite[Chapter 0, Corollary 3.1.20]{FujiwaraFoundationsRigidGeometry}, we have an isomorphism
	\[
	H^i(\RZ(X,S),\pi_*\mathcal{F}) \cong \textrm{colim}_{\bar{Y}} H^i(\bar{Y},\pi_*\mathcal{F}|_{\bar{Y}}).
	\]
	By \cref{OpenEqualRZ}, we have an isomorphism 
	\[
	H^i(\Spa(X,S),\mathcal{F})\cong H^i(\RZ(X,S),\pi_*\mathcal{F}).\qedhere
	\]
 \end{proof}

\subsection{Unramified sheaves}
In this section we define unramified sheaves, which relate sheaves on logarithmic schemes to sheaves on discretely ringed adic spaces. For more details see section 6 in \cite{LogDiff}.
\subsubsection*{The log site}
We fix, as usual, a perfect field $k$ of positive characteristic $p$. 

Consider a separated morphism of qcqs schemes $X\to S$ over $k$, where $X$ is smooth over $k$. A \textit{log smooth presentation} of $\mathcal{X}=\Spa(X,S)$ is a pair $(X,\bar{X})$ with $X$ open in $\bar{X}$, such that $\mathcal{X}=\Spa(X,\bar{X})$ and $\bar{X}$ is log smooth over $k$ when endowed with the compactifying log structure associated to $X\subset \bar{X}$. In particular, a discretely ringed adic space with a log smooth presentation is also smooth over $\Spa(k,k)$.

Hübner defines in \cite{LogDiff} the site $(X,S)_\llog$, whose objects are log smooth schemes $\bar{Y}$ such that we have the following commutative diagram
\[
\begin{tikzcd}
	Y\ar[r]\ar[d]&X\ar[d]\\
	\bar{Y}\ar[r]&S
\end{tikzcd}
\]
where $Y\to \bar{Y}$ is an open immersion, $\bar{Y}$ carries the compactifying log structure associated to $Y$ and $\Spa(Y,\bar{Y})\to \Spa(X,S)$ defines an open immersion. In particular $Y\to X$ is an open immersion of schemes and $\bar{Y}\to S$ is the normalization in $Y$ of a scheme of finite type over $S$.

Morphisms in $(X,S)_\llog$ are morphisms of log schemes. Coverings in $(X,S)_\llog$ are defined to be surjective families $(\bar{Y}_i\to \bar{Y})$ of (strict) open immersions.

\begin{example}
	The first example of interest are logarithmic differentials. Consider the presheaf $\Omega^{n,\log}$ defined by
	\[
	\bar{Y}\mapsto \Omega^{n,\llog}_{\bar{Y}}(\bar{Y}).
	\]
	This is a sheaf on $(X,S)_\llog$ since $\Omega^{n,\llog}_{\bar{Y}}$ is a Zariski sheaf on $\bar{Y}$.
\end{example}

\begin{definition}[{\cite[Definition 6.1]{LogDiff}}]
	An \textit{isomorphism in codimension one} in $(X,S)_\llog$ is a morphism $\bar{Y}'\to \bar{Y}$ of log schemes such that $\bar{Y}'_{\textrm{tr}}\cong \bar{Y}_{\textrm{tr}}\times_{\bar{Y}}\bar{Y}'$ and there exists an open subscheme $U\subset \bar{Y}$ containing all points of codimension $\leq 1$ in $\bar{Y}$ such that $U\times_{\bar{Y}}\bar{Y}'$ is isomorphic to $U$. 
	
	We write $\bar{Y}\sim_1\bar{Y}'$.
\end{definition}
We can now define unramified sheaves.
\begin{definition}[Unramified sheaves, {\cite[Definition 6.2]{LogDiff}}]
A sheaf $\mathcal{F}$ on $(X,S)_\llog$ is \textit{unramified} if for every open immersion $\bar{Y}'\to \bar{Y}$ in $(X,S)_\llog$ with dense image 
\[
\mathcal{F}(\bar{Y})\to \mathcal{F}(\bar{Y}')
\]
is injective, and for each isomorphism in codimension one  $\bar{Y}\sim_1\bar{Y}'$
\[
\mathcal{F}(\bar{Y})\to \mathcal{F}(\bar{Y}')
\]
is an isomorphism.
\end{definition}
\begin{example}[{\cite[Lemma 6.10]{LogDiff}}]
	\begin{itemize}
			\item The presheaf $\bar{\mathcal{O}}:\bar{Y}\mapsto \mathcal{O}_{\bar{Y}}(\bar{Y})$ is unramified.
			\item The sheaf of log differentials $\Omega^{n,\llog}$ is unramified.
			\item The sheaf of exact differentials $Z\Omega^{n,\llog}= \ker(\Omega^{n,\llog}\xrightarrow{d}\Omega^{n+1,\llog})$ is unramified. 
			
			\item The sheaf $B\Omega^{n,\llog}=d(\Omega^{n-1,\llog})$ is unramified. This will be proven later in \cref{BOmegaUnramified} using the Cartier operator.

	\end{itemize}
	More generally, any locally free $\bar{\mathcal{O}}$-module in $(X,S)_\llog$ is unramified.
\end{example}
\begin{lemma}\label{kernelOfUnramifiedIsUnramified}
	Consider the  exact sequence of sheaves in $(X,S)_\llog$
	\[
	0\to \mathcal{F}\to \mathcal{G} \xrightarrow{f}\mathcal{H}.
	\]
	If $\mathcal{G}$ and $\mathcal{H}$ are unramified, then the same is true for $\mathcal{F}$.
\end{lemma}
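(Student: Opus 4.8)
The plan is to use that the global-sections functor on the site $(X,S)_\llog$ is left exact. Thus exactness of $0\to\mathcal{F}\to\mathcal{G}\xrightarrow{f}\mathcal{H}$ yields, for every object $\bar{Y}$ of $(X,S)_\llog$, an identification $\mathcal{F}(\bar{Y})=\ker\big(\mathcal{G}(\bar{Y})\xrightarrow{f}\mathcal{H}(\bar{Y})\big)$, and every morphism $\bar{Y}'\to\bar{Y}$ induces a morphism of such left-exact sequences of abelian groups. Both conditions in the definition of ``unramified'' for $\mathcal{F}$ then follow by an elementary diagram chase from the corresponding properties of $\mathcal{G}$ and $\mathcal{H}$.

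Concretely, I would first fix an open immersion $\bar{Y}'\to\bar{Y}$ in $(X,S)_\llog$ with dense image and contemplate the commutative diagram with exact rows
\[
\begin{tikzcd}
0\ar[r]&\mathcal{F}(\bar{Y})\ar[r]\ar[d]&\mathcal{G}(\bar{Y})\ar[r,"f"]\ar[d]&\mathcal{H}(\bar{Y})\ar[d]\\
0\ar[r]&\mathcal{F}(\bar{Y}')\ar[r]&\mathcal{G}(\bar{Y}')\ar[r,"f"]&\mathcal{H}(\bar{Y}').
\end{tikzcd}
\]
The composite $\mathcal{F}(\bar{Y})\to\mathcal{G}(\bar{Y})\to\mathcal{G}(\bar{Y}')$ is injective, being the kernel inclusion followed by a map that is injective because $\mathcal{G}$ is unramified; since this composite equals $\mathcal{F}(\bar{Y})\to\mathcal{F}(\bar{Y}')\to\mathcal{G}(\bar{Y}')$ and the last arrow is injective, it follows that $\mathcal{F}(\bar{Y})\to\mathcal{F}(\bar{Y}')$ is injective. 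This establishes the first required property.

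Next I would treat an isomorphism in codimension one $\bar{Y}\sim_1\bar{Y}'$. In the same diagram the middle vertical arrow is now an isomorphism (as $\mathcal{G}$ is unramified) and the right vertical arrow is an isomorphism (as $\mathcal{H}$ is unramified), in particular injective. I would then run the standard four/five-lemma argument for left-exact rows: injectivity of $\mathcal{F}(\bar{Y})\to\mathcal{F}(\bar{Y}')$ is exactly the case just handled, and for surjectivity I would take a section $t\in\mathcal{F}(\bar{Y}')\subseteq\mathcal{G}(\bar{Y}')$, lift it through the middle isomorphism to $s\in\mathcal{G}(\bar{Y})$, note that $f(s)\in\mathcal{H}(\bar{Y})$ maps to $f(t)=0$ in $\mathcal{H}(\bar{Y}')$ and hence vanishes by injectivity of $\mathcal{H}(\bar{Y})\to\mathcal{H}(\bar{Y}')$, so $s\in\ker f=\mathcal{F}(\bar{Y})$ and maps to $t$. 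Hence $\mathcal{F}(\bar{Y})\to\mathcal{F}(\bar{Y}')$ is an isomorphism, which is the second required property.

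I do not expect a genuine obstacle here. The only points meriting a word of care are that $\Gamma$ really computes the kernel presheaf in this situation (because $\mathcal{F}=\ker f$ already as sheaves, so no sheafification intervenes and left exactness of $\Gamma$ suffices) and that the relation $\sim_1$ is used symmetrically, so that the vertical comparison maps in the diagram are the ones whose behaviour is controlled by the hypotheses on $\mathcal{G}$ and $\mathcal{H}$. Everything else is formal.
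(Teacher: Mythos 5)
Your proof is correct and is essentially the paper's own argument: the same commutative diagram with left-exact rows, injectivity of $\mathcal{F}(\bar{Y})\to\mathcal{F}(\bar{Y}')$ deduced from injectivity of the $\mathcal{G}$-restriction, and the codimension-one case handled by the four/five-lemma chase using that the $\mathcal{G}$- and $\mathcal{H}$-restrictions are isomorphisms (the paper simply cites the five lemma where you chase explicitly). The only wording to adjust is ``exactly the case just handled'': a codimension-one isomorphism need not be a dense open immersion, but your injectivity chase only used injectivity of the middle vertical, so it applies verbatim.
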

\begin{proof}
	\begin{itemize}
			\item \textbf{Injectivity: } This condition holds because $\mathcal{F}$ is a subsheaf of the unramified sheaf $\mathcal{G}$: Consider an open immersion $\bar{Y}'\to \bar{Y}$ with dense image. We have the diagram 
			\[
			\begin{tikzcd}
	 			\mathcal{F}(\bar{Y})\arrow[r]\arrow[d]&\mathcal{F}(\bar{Y}')\arrow[d]\\
				\mathcal{G}(\bar{Y})\arrow[r]&\mathcal{G}(\bar{Y}')
			\end{tikzcd}
			\]
			The lower horizontal and left vertical arrow are injections. Hence the upper horizontal arrow is also an injection.
			\item \textbf{Bijection for isomorphisms  in codimension one:} Let $\bar{Y}\sim_1 \bar{Y}'$ be an isomorphism in codimension one. We have the following diagram
			\[
			\begin{tikzcd}
				0\ar[r]&\mathcal{F}(\bar{Y}')\ar[r]\ar[d]&\mathcal{G}(\bar{Y}')\ar[r]\ar[d,"\cong"]&\mathcal{H}(\bar{Y}')\ar[d,"\cong"]\\
				0\ar[r]&\mathcal{F}(\bar{Y})\ar[r]&\mathcal{G}(\bar{Y})\ar[r]&\mathcal{H}(\bar{Y}).
			\end{tikzcd}
			\]
			Since $\mathcal{G}(\bar{Y}')\to \mathcal{G}(\bar{Y})$ and $\mathcal{F}(\bar{Y}')\to \mathcal{F}(\bar{Y})$ are isomorphisms we may apply the 5-lemma to obtain that $\mathcal{F}(\bar{Y}')\to \mathcal{F}(\bar{Y})$ is also an isomorphism. \qedhere
%
	\end{itemize}
\end{proof}
\subsubsection*{Induced sheaves on adic spaces}
An unramified sheaf $\mathcal{F}$ in $(X,S)_\llog$ induces a presheaf on adic spaces denoted $\mathcal{F}_{\lim}$. Before stating the definition of $\mathcal{F}_{\lim}$ we need the following lemma.
\begin{lemma}[{\cite[Lemma 6.3]{LogDiff}}]
	Let $\mathcal{F}$ be an unramified sheaf on $(X,S)_\llog$. If $\bar{Y}'\to \bar{Y}$ induces an isomorphism $\Spa(Y',\bar{Y}')\to \Spa(Y,\bar{Y})$, then the map $\mathcal{F}(\bar{Y})\to \mathcal{F}(\bar{Y}')$ is an isomorphism.
\end{lemma}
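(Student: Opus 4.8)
\emph{Proof plan.} The plan is to show that $f\colon\bar Y'\to\bar Y$ is an \emph{isomorphism in codimension one} in the sense recalled before Definition~6.2 of \cite{LogDiff}; granting this, the assertion is exactly what the definition of an unramified sheaf gives when applied to the morphism $f$. So everything reduces to verifying the two conditions defining $\bar Y'\sim_1\bar Y$: that $\bar Y'_{\textrm{tr}}\to \bar Y_{\textrm{tr}}\times_{\bar Y}\bar Y'$ is an isomorphism, and that $f$ is an isomorphism over an open subscheme $U\subseteq\bar Y$ containing every point of codimension $\le 1$.

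For the first condition I would use that, for the compactifying log structure, $\bar Y_{\textrm{tr}}=Y$ and $\bar Y'_{\textrm{tr}}=Y'$, and that $Y$ is recovered from $\Spa(Y,\bar Y)$ as the subspace of points carrying the trivial valuation, together with its structure sheaf. Since the given isomorphism $\Spa(Y',\bar Y')\to\Spa(Y,\bar Y)$ is the one induced by $f$ (and the underlying $Y'\to Y$), functoriality of $\Spa$ sends trivial valuations to trivial valuations in both directions, so it identifies $Y'$ with $Y$, hence with $f^{-1}(Y)=\bar Y_{\textrm{tr}}\times_{\bar Y}\bar Y'$. In particular $f$ is birational.

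The main work, and the step I expect to be the real obstacle, is to deduce that $f$ is proper. Here I would invoke Temkin's description (recalled above) of the points of $\RZ(Y,\bar Y)$ as equivalence classes of valuations $v$ on the function ring of $Y$ bounded by $\mathcal{O}_{\bar Y}$, equipped with a morphism $\Spec(\mathcal{O}_v)\to\bar Y$: the hypothesis that $f$ induces an isomorphism on $\Spa$ says that every such datum for $\bar Y$ lifts, compatibly, to one for $\bar Y'$, which is precisely the lifting required in the valuative criterion of properness for $f$. Since $\bar Y$ and $\bar Y'$ are normalizations in $Y$ of schemes of finite type over $S$ — and $S$ is, as throughout the paper, Noetherian and well behaved enough that these normalizations are again Noetherian and of finite type — the morphism $f$ is separated and of finite type, so the valuative criterion applies and $f$ is proper. (Alternatively, from $\RZ(Y',\bar Y')\cong\RZ(Y,\bar Y)$ one produces a common $Y$-modification $\bar Z\to\bar Y'$, which is then also a $Y$-modification of $\bar Y$ via $\bar Z\to\bar Y'\xrightarrow{f}\bar Y$, and concludes properness of $f$ from that.) Thus $f$ is a modification.

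Finally I would check the second condition. As $\bar Y$ and $\bar Y'$ are log smooth over the perfect field $k$ they are log regular, hence normal; a proper birational morphism $f$ of normal Noetherian schemes is an isomorphism over an open subscheme containing all points of codimension $\le 1$ of $\bar Y$. Indeed, over a generic point of a component $f$ is an isomorphism by birationality, and over the generic point $\eta$ of a codimension-one point $\mathcal{O}_{\bar Y,\eta}$ is a discrete valuation ring and $f^{-1}(\Spec\mathcal{O}_{\bar Y,\eta})$ is a normal integral scheme proper and birational over $\Spec\mathcal{O}_{\bar Y,\eta}$, hence equal to it (a section exists by the valuative criterion, and its image is a closed subscheme meeting the generic point of an integral scheme, so it is everything); since the isomorphism locus of a proper morphism of finite type is open, it is the required open $U$. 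With $U$ in hand, $U\times_{\bar Y}\bar Y'\cong U$, so $f$ is an isomorphism in codimension one and the lemma follows from Definition~6.2 of \cite{LogDiff}.
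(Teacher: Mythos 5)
The paper itself gives no proof of this lemma (it is imported wholesale from \cite[Lemma 6.3]{LogDiff}), so I can only assess your argument on its own terms. Your reduction --- show that $f\colon \bar Y'\to\bar Y$ is an isomorphism in codimension one and then quote the definition of an unramified sheaf --- is a reasonable route, but the two steps you yourself identify as the substance are not actually carried out. The properness step is the main one: the valuative criterion for $f$ demands a lift for every test diagram $\Spec K\to \bar Y'$, $\Spec \mathcal{O}_v\to\bar Y$, where $\Spec K$ may land at \emph{any} point of $\bar Y'$, in particular on the boundary $\bar Y'\setminus Y'$; points of $\Spa(Y,\bar Y)$ only encode valuations on residue fields of points of $Y$, so the hypothesis hands you the lifting property only for valuations supported on $Y'$. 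It is therefore not ``precisely the lifting required in the valuative criterion''. This can be repaired --- $Y'$ is dense in $\bar Y'$ and everything is Noetherian, separated and of finite type, so one may invoke the refinement of the valuative criterion to valuations supported at generic points, or argue via a Nagata compactification $\bar Y'\subset P$ over $\bar Y$: a point $z\in P\setminus\bar Y'$ lies in $\overline{\{u\}}$ for a generic point $u$ of $Y'$, a valuation ring of $k(u)$ dominating $\mathcal{O}_{\overline{\{u\}},z}$ gives a point of $\Spa(Y,\bar Y)$, and surjectivity of the Spa map plus uniqueness of centers forces $z\in\bar Y'$ --- but some such argument must be supplied, and your parenthetical alternative (a common $Y$-modification dominating $\bar Y'$) presupposes essentially what is to be proved.

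The other gap concerns the first condition of $\sim_1$, which is the equality $\bar Y'_{\textrm{tr}}\cong \bar Y_{\textrm{tr}}\times_{\bar Y}\bar Y'$, i.e. a statement about trivial loci and about $f^{-1}$ on the nose. Identifying $Y'$ with $Y$ through the trivially valued points of the adic isomorphism is fine, but (a) for a compactifying log structure the trivial locus need not equal the chosen open (if $\bar Y\setminus Y$ has components of codimension $\geq 2$ on the normal scheme $\bar Y$, the trivial locus is strictly larger than $Y$), and (b) even granting $\bar Y_{\textrm{tr}}=Y$ and $\bar Y'_{\textrm{tr}}=Y'$, the abstract isomorphism $Y'\cong Y$ does not yet give $Y'=f^{-1}(Y)$: you must exclude boundary divisors of $\bar Y'$ that $f$ maps into $Y$, which needs a separate argument (for instance injectivity of the Spa map applied to suitable composite valuations, or properness together with $f_*\mathcal{O}_{\bar Y'}=\mathcal{O}_{\bar Y}$, the latter using that $\bar Y$ is integrally closed in $j_*\mathcal{O}_Y$). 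Relatedly, in your codimension-one step you assert that $f^{-1}(\Spec\mathcal{O}_{\bar Y,\eta})$ is integral; a priori it could have extra components inside the closed fibre, and excluding them again uses finiteness of the fibre over $\eta$ and $f_*\mathcal{O}_{\bar Y'}=\mathcal{O}_{\bar Y}$. None of this looks fatal, and the strategy is likely completable, but as written the crux --- that an isomorphism on $\Spa$ forces $f$ to be a modification that is an isomorphism in codimension one --- is asserted rather than proved.
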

This enables us to associate a presheaf on $\Spa(X,S)$ to each unramified sheaf.
\begin{definition}
	Let $\mathcal{F}$ be an unramified sheaf on $(X,S)_\llog$. We define the presheaf $\mathcal{F}_{\lim}$ on $\Spa(X,S)$ as 
	$$\mathcal{F}_{\lim}(\mathcal{U})= \lim\limits_{\bar{Y}\in (X,S)_\llog/\mathcal{U}}\mathcal{F}(\bar{Y}),$$
	where $(X,S)_\llog/\mathcal{U}$ denotes all objects $\bar{Y}$ in $(X,S)_\llog$, such that $\Spa(\bar{Y}_\textrm{tr},\bar{Y})\to \Spa(X,S)$ factors through $\mathcal{U}$.
\end{definition}
Actually, $\mathcal{F}_{\lim}$ is not only a presheaf.
\begin{proposition}[{\cite[Proposition 8.9]{LogDiff}}]
	The presheaf $\mathcal{F}_{\lim}$ is a sheaf on the topological space $\Spa(X,S)$.
\end{proposition}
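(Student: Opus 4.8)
\emph{Plan.} The plan is to check the two sheaf axioms on a basis of opens and then pass to arbitrary opens formally, exploiting that limits commute with limits. For the basis I take $\mathcal{B}=\{\mathcal{U}_{\bar Y}:=\Spa(\bar Y_{\textrm{tr}},\bar Y)\mid \bar Y\in (X,S)_\llog\}$. The first ingredient I need is geometric: that $\mathcal{B}$ is a basis of $\Spa(X,S)$, and more precisely that for each $\bar Y$ the log smooth $Y$-modifications $\bar Y'\to\bar Y$ form a cofinal family in the cofiltered category $(X,S)_\llog/\mathcal{U}_{\bar Y}$. This follows from Temkin's presentation of $\Spa(X,S)$ as an inverse limit of $X$-modifications (as used in Lemma \ref{Cohomology=Blowups} and Corollary \ref{CohomologyIsColimit}) together with the cofinality of log smooth presentations among all $X$-modifications, which in our setting is a consequence of resolution of singularities. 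Granting this, on a basic open $\mathcal{U}_{\bar Y}$ we have $\mathcal{F}_{\lim}(\mathcal{U}_{\bar Y})=\lim_{\bar Y'}\mathcal{F}(\bar Y')$, the limit running over log smooth $Y$-modifications $\bar Y'$ of $\bar Y$ only; by the lemma preceding the definition of $\mathcal{F}_{\lim}$, the transition maps coming from modifications that induce an isomorphism of adic spaces are already isomorphisms, so this limit is a stable value of $\mathcal{F}$.

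Next, the sheaf axiom on $\mathcal{B}$. Let $\mathcal{U}_{\bar Y}=\bigcup_l\mathcal{U}_{\bar Y_l}$ be a covering by basic opens; since $\mathcal{U}_{\bar Y}$ is quasi-compact, we may take it finite. Choose a log smooth $Y$-modification $\bar Y'\to\bar Y$ dominating all the $\bar Y_l$; then each $\mathcal{U}_{\bar Y_l}$ equals $\mathcal{U}_{\bar W_l}$ for a Zariski-open $\bar W_l\subseteq\bar Y'$, and the $\bar W_l$ cover $\bar Y'$ because the projection $\Spa(X,S)\to\bar Y'$ is surjective. For every further log smooth $Y$-modification $\bar Y''\to\bar Y'$ the preimages of the $\bar W_l$ form a Zariski cover of $\bar Y''$, and since $\mathcal{F}$ is a Zariski sheaf on $\bar Y''$ (it is a sheaf on $(X,S)_\llog$, whose coverings are strict open coverings) we obtain an exact equalizer sequence $\mathcal{F}(\bar Y'')\to\prod_l\mathcal{F}(\bar W_l'')\rightrightarrows\prod_{l,m}\mathcal{F}(\bar W_{lm}'')$. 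Taking the limit over all such $\bar Y''$, and using that limits commute with limits together with the cofinality statement of the previous paragraph applied to $\mathcal{U}_{\bar Y}$, to each $\mathcal{U}_{\bar Y_l}=\mathcal{U}_{\bar W_l}$ and to their intersections, this becomes the equalizer sequence $\mathcal{F}_{\lim}(\mathcal{U}_{\bar Y})\to\prod_l\mathcal{F}_{\lim}(\mathcal{U}_{\bar Y_l})\rightrightarrows\prod_{l,m}\mathcal{F}_{\lim}(\mathcal{U}_{\bar Y_l}\cap\mathcal{U}_{\bar Y_m})$, which is the sheaf axiom for $\mathcal{F}_{\lim}$ on the basis. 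Finally, a presheaf satisfying the sheaf axiom for coverings of basis elements by basis elements, and whose value on an arbitrary open $\mathcal{U}$ is the limit of its values on the basic opens contained in $\mathcal{U}$, is a sheaf; the latter holds for $\mathcal{F}_{\lim}$ once one notes $(X,S)_\llog/\mathcal{U}=\colim_{\mathcal{U}_{\bar Y}\subseteq\mathcal{U}}(X,S)_\llog/\mathcal{U}_{\bar Y}$, which is again the cofinality of $\mathcal{B}$.

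The argument is formal once the cofinality inputs are in hand, so the expected main obstacle is precisely the geometric bookkeeping behind them: showing that an arbitrary open of $\Spa(X,S)$, and an arbitrary object of $(X,S)_\llog$ over it, are dominated by log smooth modifications of a single fixed log smooth presentation. This rests on Temkin's Riemann-Zariski theory, on flattening (to replace a modification by a dominating blow-up that extends across an open immersion), and on resolution of singularities (to render the resulting modification log smooth). By contrast, the separation axiom alone can be checked directly and without these: if $s\in\mathcal{F}_{\lim}(\mathcal{U})$ restricts to $0$ on an open cover, then for each $\bar Y\in(X,S)_\llog/\mathcal{U}$ the component $s_{\bar Y}\in\mathcal{F}(\bar Y)$ is killed after a Zariski refinement of that cover on $\bar Y$, hence $s_{\bar Y}=0$ because $\mathcal{F}$ is a Zariski sheaf that is injective for dense open immersions.
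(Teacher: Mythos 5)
The paper does not actually prove this statement: it is quoted from \cite[Proposition 8.9]{LogDiff}, where it is unconditional, and it is used unconditionally later (e.g.\ in Proposition \ref{ShortExactOnClog} and in the first assertion of Theorem \ref{C-1Shortexact}, neither of which assumes the resolution conjectures). Your argument, by contrast, is conditional in an essential way: both of your ``cofinality inputs'' --- that the opens $\Spa(\bar Y_{\textrm{tr}},\bar Y)$ with $\bar Y$ log smooth form a basis of $\Spa(X,S)$, and that finitely many such opens inside a fixed $\mathcal{U}_{\bar Y}$ can be realized as Zariski opens on a single log smooth $Y$-modification $\bar Y'$ --- are exactly the content of Remark \ref{DescriptionOfOpens}, i.e.\ they require Conjectures \ref{ConjResolution3} and \ref{ConjResolution2}/\ref{GeneralResolution}. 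Without resolution there is no reason that arbitrary $Y$-modifications are dominated by log smooth ones, nor that the basic opens form a basis, and these are not conveniences but the load-bearing steps: without them the limit over $(X,S)_\llog/\mathcal{U}$ cannot be broken up along an arbitrary open cover at all. So what you prove is a strictly weaker, conditional statement, not the proposition as stated and cited. (A smaller structural point: the mechanism that makes basic opens tractable is not cofinality of modifications but the fact, recorded in the remark following the proposition, that $\bar Y$ is a \emph{final} object of $(X,S)_\llog/\mathcal{U}_{\bar Y}$, whence $\mathcal{F}_{\lim}(\mathcal{U}_{\bar Y})=\mathcal{F}(\bar Y)$; restricting a limit over the whole over-category to the subfamily of modifications needs this finality, not just that modifications are ``cofinal among modifications''.)

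The claim that separatedness ``can be checked directly and without these'' is also not right as written. Your step ``$s_{\bar Y}$ is killed after a Zariski refinement of that cover on $\bar Y$'' presupposes a Zariski cover of $\bar Y$ refining the given adic cover, and such a refinement does not exist on $\bar Y$ itself in general --- adic covers are typically only refined after passing to a modification (that is the whole point of the Riemann--Zariski description in Lemma \ref{Cohomology=Blowups}), so you are implicitly re-using the machinery you claimed to avoid. The fallback via injectivity along dense open immersions can be made to work, but it needs an actual argument: one must produce a dense Zariski open $W\subseteq\bar Y$ with $\Spa(W_{\textrm{tr}},W)$ contained in a single member of the cover (for instance by noting that the complement of that member inside $\Spa(Y,\bar Y)$ is closed, contains no point centered at a generic point of $\bar Y$, and has closed image under the center map $\Spa(Y,\bar Y)\to\bar Y$, so one may take $W$ to be the complement of that image); only then does $s_{\bar Y}|_W=0$ together with injectivity for dense open immersions give $s_{\bar Y}=0$. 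As it stands, then, the proposal has a genuine gap both in the gluing half (resolution-dependence) and in the justification of the separation half.
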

\begin{remark}
	Note that if $\mathcal{U}=\Spa(Y_{\textrm{tr}},\bar{Y})$ is associated to a log smooth log scheme $\bar{Y}$, then $(X,S)_{\log}/\mathcal{U}$ has a final object, namely $\bar{Y}$. Hence $\mathcal{F}_{\lim}(\mathcal{U})=\mathcal{F}(\bar{Y})$
	
	The operation $(-)_{\lim}$ taking unramified sheaves to sheaves on $\Spa(X,S)$
	is functorial and left exact. 
\end{remark}
We now can focus on our examples of interest
\begin{example}
	Let $\mathcal{X}=\Spa(X,S)$ be a discretely ringed adic space. Consider the sheaf $\Omega^{n,\llog}_{\lim}$. We have a differential 
		\[
		d:\Omega^{n,\llog}_{\lim}\to\Omega^{n+1,\llog}_{\lim},
		\]
		defined by  taking the limit on all differentials.
		By left exactness of the $\lim$ functor on unramified sheaves we have an isomorphism
		\[
		(Z\Omega^{n,\llog})_{\lim}\cong \ker(\Omega^{n,\llog}_{\lim}\xrightarrow{d}\Omega^{n+1,\llog}_{\lim}).
		\]
		For simplicity we will write $Z\Omega^{n,\llog}_{\lim}$ for $\ker(\Omega^{n,\llog}_{\lim}\xrightarrow{d}\Omega^{n+1,\llog}_{\lim})$.
\end{example}

\section{Differentials over adic spaces}
\subsection{Rangers and definition of \texorpdfstring{$\Omega^+$}{Omega⁺}}
As usual, let \( \mathcal{X} = \Spa(X, S) \) be a discretely ringed adic space over a perfect field $k$ of positive characteristic $p$. The sheaf \( \Omega^+ \) was introduced in \cite{LogDiff} and is defined by
\[
\Omega^+(\mathcal{U}) := \left\{ \eta \in \Omega(\mathcal{U}) \;\middle|\; |\eta|_x \leq 1 \text{ for all } x \in \mathcal{U} \right\}
\]
for a suitable definition of the seminorm
\[
|\cdot|_x : \Omega_{A_{\supp(x)}} \to \mathcal{R}_{\Gamma_x},
\]
known as the \emph{Kähler seminorm} (see below). This construction originates in Temkin’s paper \cite{TemkinMetrization}, where he defines a similar Kähler seminorm on the module of differentials \( \Omega_{K/k} \) for an extension of real-valued fields \( K/k \) as
\[
|\eta|_\Omega:= \inf_{\eta=\sum f_idg_i}\max_i\{|f_i|_{K}|g_i|_K\}.
\]
This definition, however, does not directly extend to local Huber pairs \( (A, A^+) \), as the value group \( \Gamma_A \) may lack infima. To address this, Hübner introduced the concept of \emph{rangers} in \cite[§5]{LogDiff}, which were further applied to the study of adic curves in recent work by Hübner and Temkin \cite[2.1.5]{HTCurves}.

\begin{definition}[{Rangers, \cite[§5]{LogDiff}, \cite[2.1.5]{HTCurves}}]
	Let \( \Gamma \) be an ordered abelian group (or more generally, an ordered set). A \emph{ranger} is a pair \( (\Gamma', \gamma') \), where \( \Gamma' \) is an ordered group (or ordered set) containing \( \Gamma \), and \( \gamma' \in \Gamma' \). Two rangers \( (\Gamma_1, \gamma_1) \) and \( (\Gamma_2, \gamma_2) \) are \emph{equivalent} if there exists a ranger \( (\Gamma', \gamma') \) such that \( \Gamma' \subseteq \Gamma_1 \cap \Gamma_2 \), and \( \gamma' \) maps to \( \gamma_1 \) in \( \Gamma_1 \) and to \( \gamma_2 \) in \( \Gamma_2 \).

	The set of equivalence classes of rangers is denoted \( \mathcal{R}_\Gamma \). It is  a  totally ordered set.

\end{definition}

Rangers are used to define the Kähler seminorm on modules of differentials over \textit{local Huber pairs}.  Let us shortly recall the definition of local Huber pairs: Let $\mX$ be an adic space and $x\in \mX$. The \textit{localization} of $\mX$ is the adic space
\[
\mX_x:= \lim\limits_{x\in \mU\subseteq \mX} \mU. 
\]

If $\mX$ is a discretely ringed affinoid $\Spa(A,A^+)$, then 
\[
	\Spa(A,A^+)_x=\Spa(B,B^+),
\]
where $B=A_{\supp(x)}$ and $B^+$ is the preimage of $k^+(x)\subset k(x)=A_{\supp(x)}/\mm_{\supp(x)} $ in $A_{\supp(x)}=B$. In particular, $\mX$ is an affinoid adic space. 

More generally, a Huber pair $(A,A^+)$ is called \textit{local} if $A$ is local with maximal ideal $\mm$  and $A^+$ is the preimage of a valuation ring $\mO \subseteq A/\mm$ in $A$. In particular, $A^+$ is a local ring with a valuation $|\cdot|$ and a maximal ideal
\[
\mm^+=\{a\in A|, |a|<1\}.
\]
Note also that $A^+=\{a\in A|,|a|\leq 1\}$ and $\mm=\{a\in A,|a|=0\}.$

The localization of an adic space is local. For more details on local Huber pairs, see  \cite[Section 10]{HSTame}.

\begin{definition}[{\cite[§7]{LogDiff}}]\label{def:ValuationOmega}
	Let \( (A, A^+) \) be a local Huber pair over a perfect field \( k \) of positive characteristic. The \emph{Kähler seminorm}
	\[
	|\cdot|:\Omega_{A/k}\to \mathcal{R}_\Gamma
	\]
	is defined by
	\[
	|\omega|:= \inf\limits_{\omega=\sum f_idg_i}\max\limits_i\big\{ |f_i|_A|g_i|_A\big\},
	\]
	where $\Gamma$ is value group associated to $(A,A^+)$.
\end{definition}

The usual Kähler differentials \( \Omega^n_{X/k} \) define a coherent \( \mathcal{O}_{\mathcal{X}} \)-module
\[
\Omega^{n}_{\mathcal{X}/k}:\Spa(A,A^+)\mapsto \Omega_{A/k}^n,
\]
 Using rangers. we can also define the sheaf $\Omega^{+,n}_{\mathcal{X}/k}$.
\begin{definition}
	
	Let \( \mathcal{X} \) be a discretely ringed adic space over a field \( k \) of positive characteristic. For any open subset \( \mathcal{U} \subseteq \mathcal{X} \), define
	\[
	\Omega^+_{\mathcal{X}/k}(\mathcal{U}) := \left\{ \eta \in \Omega_{\mathcal{X}/k}(\mathcal{U}) \;\middle|\; |\eta|_v|\leq 1 \text{ for all } v \in \mathcal{U} \right\}.
	\]
	Then set
	\[
	\Omega^{+,n}_{\mathcal{X}/k} := \bigwedge\nolimits^n_{\mO^+_\mX} \Omega^+_{\mathcal{X}/k}.
	\]
	
	We often omit the subscript and simply write \( \Omega^{+,n} \) when the context is clear.
	\begin{definition}[Exact and closed differentials]
		We define the sheaf of \emph{exact differentials} as
		\[
		B\Omega^{+,n} := d(\Omega^{+,n-1}),
		\]
		and the sheaf of \emph{closed differentials} as
		\[
	Z\Omega^{+,n} := \ker(\Omega^{+,n} \xrightarrow{d} \Omega^{+,n+1}).
		\]
		
	\end{definition}
\end{definition}
The next property enables us to use $\Omega^{+,n}$ in the context of tame cohomology.
\begin{proposition}[{\cite[Proposition 7.9]{LogDiff}}]
	The presheaf $\Omega^{+,n}$ is a sheaf in the tame and strongly \'etale topology.
\end{proposition}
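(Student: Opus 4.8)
\emph{Plan of proof.} The statement asks for two things, for each of the two topologies: separatedness and gluing. The plan is to dispatch separatedness formally and then to reduce the gluing axiom to a purely local statement about the behaviour of the Kähler seminorm under unramified, resp.\ tamely ramified, extensions of valued fields. Separatedness is immediate: $\Omega^+_{\mathcal{X}/k}$ is by construction a subpresheaf of $\Omega_{\mathcal{X}/k}$, which is a coherent $\mathcal{O}_{\mathcal{X}}$-module and hence satisfies descent along the \'etale (a fortiori tame, a fortiori strongly \'etale) covers we consider; thus two sections of $\Omega^+$ agreeing on a cover already agree as sections of $\Omega_{\mathcal{X}/k}$, hence coincide. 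So the whole content is the gluing axiom.

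For gluing, let $\{f_i\colon\mathcal{U}_i\to\mathcal{U}\}$ be a cover in the tame, resp.\ strongly \'etale, topology and let $\eta_i\in\Omega^+(\mathcal{U}_i)$ be sections agreeing on all fibre products $\mathcal{U}_i\times_{\mathcal{U}}\mathcal{U}_j$. Descent for the coherent sheaf $\Omega_{\mathcal{X}/k}$ produces a unique $\eta\in\Omega_{\mathcal{X}/k}(\mathcal{U})$ with $f_i^\ast\eta=\eta_i$, and it remains to prove $|\eta|_x\le 1$ for every $x\in\mathcal{U}$, i.e.\ $\eta\in\Omega^+(\mathcal{U})$. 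Fix $x\in\mathcal{U}$; since the $f_i$ are jointly surjective there are an index $i$ and a point $y\in\mathcal{U}_i$ with $f_i(y)=x$, and since $f_i$ is \'etale the relative differentials vanish, so the image of $\eta$ in $\Omega_{k(x)/k}$, base-changed to $k(y)$, is carried to the image of $\eta_i$ in $\Omega_{k(y)/k}$. As $\eta_i\in\Omega^+(\mathcal{U}_i)$ we have $|\eta_i|_y\le 1$, so the image of $\eta$ in $\Omega_{k(x)/k}$ has Kähler seminorm $\le 1$ after base change along the extension $k(y)/k(x)$, which by definition of the site is unramified, resp.\ tamely ramified. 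Everything thus reduces to the following claim.

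\emph{Claim.} If $L/K$ is a finite extension of valued fields over $k$ which is unramified, resp.\ tamely ramified, then for $\omega\in\Omega_{K/k}$ one has $|\omega|_{\Omega,K}\le 1$ if and only if the image of $\omega$ in $\Omega_{L/k}$ has Kähler seminorm $\le 1$; in fact the base-change map is isometric. One inequality is formal: a representation $\omega=\sum f_i\,dg_i$ with $f_i,g_i\in K$ is also a representation over $L$, and $|{\cdot}|_L$ extends $|{\cdot}|_K$ under $\Gamma_K\hookrightarrow\Gamma_L$, so $|\omega\otimes 1|_{\Omega,L}\le|\omega|_{\Omega,K}$ and the ``$\le 1$ over $K$'' implication is clear. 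The substance is the reverse inequality: an $L$-representation must be improvable to a $K$-representation of no larger maximum. In the unramified case $\mathcal{O}_L$ is a localization of an \'etale $\mathcal{O}_K$-algebra and $\Gamma_L=\Gamma_K$, so writing a primitive element as $\mathcal{O}_K[\theta]/(f)$ with $f'(\theta)$ a unit one rewrites $d\theta$, hence $d\ell$ for any $\ell\in\mathcal{O}_L$, using differentials of elements of $K$ with coefficients bounded by the ones in the given $L$-representation. In the tamely ramified case one is reduced, after passing to strict henselizations (as in the definition of ``tamely ramified'') and an unramified extension, to $L=K(\varpi)$ with $\varpi^e=\pi$ and $e$ prime to $p$; here $d\log\varpi=e^{-1}\,d\log\pi$ has the same seminorm as $d\log\pi$ because $e$ is a unit, and every $\ell\in L$ is a $K$-linear combination of powers of $\varpi$, which converts an $L$-representation of $\omega$ into a $K$-representation without increasing the maximum.

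I expect the Claim, and within it the tamely ramified case, to be the main obstacle: it is exactly the point where wild ramification is excluded, since for a wildly ramified $L/K$ the different is nontrivial and the base-change map strictly decreases the Kähler seminorm — this is precisely why $\Omega^+$ fails to be a sheaf for the full \'etale topology but succeeds for the tame and strongly \'etale topologies. Granting the Claim we obtain $|\eta|_x\le 1$ for all $x\in\mathcal{U}$, so $\eta\in\Omega^+(\mathcal{U})$ restricts to the $\eta_i$, which establishes gluing and finishes the proof. For the analytic field case the required seminorm computation under tame extensions is due to Temkin \cite{TemkinMetrization}; the reduction of a point of a discretely ringed adic space to that case proceeds through the local ring $A_{\supp(x)}$ and its residue valued field, exactly as in the definition of the Kähler seminorm.
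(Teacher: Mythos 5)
The paper itself gives no proof of this proposition: it is imported verbatim from \cite[Proposition 7.9]{LogDiff}, and the only related machinery quoted here is the identification $\Omega^{\llog,n}_{(A,A^+)}\cong\Omega^{+,n}_{(A,A^+)}$ for local Huber pairs (\cite[Lemma 7.3]{LogDiff}) and the fact that $\Omega^{+,n}$ is the sheafification of $\Omega^{n,\llog}$. Measured against that, your overall architecture is reasonable: separatedness and the existence of the glued section do come for free from flat descent of the coherent sheaf $\Omega_{\mathcal{X}/k}$ (a tame or strongly \'etale cover of affinoids induces a faithfully flat \'etale ring map, since every prime of $A$ supports a trivial valuation and hence lifts), and you correctly isolate the real content as the invariance of the condition $|\eta|_x\le 1$ under unramified, resp.\ tamely ramified, extensions of the valued point, which is exactly where wild ramification is excluded. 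However, there are two genuine gaps. First, your reduction to a statement about the residue valued fields $k(y)/k(x)$ is asserted rather than proved: the seminorm $|\eta|_x$ is by definition an infimum over representations $\eta=\sum f_i\,dg_i$ with coefficients in the local ring $A_{\supp(x)}$, with values in the ranger set $\mathcal{R}_{\Gamma_x}$, whereas your Claim lives on $\Omega_{k(x)/k}$ and $\Omega_{k(y)/k}$. Comparing the infimum over $A$-representations with the one over residue-field representations, in both directions and at both $x$ and $y$, is precisely the nontrivial local computation that in \cite{LogDiff} is carried by identifying the unit ball with the image of logarithmic differentials of the Huber pair; without it your d\'evissage does not get off the ground.

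Second, the Claim itself — the heart of the matter — is only argued in a special case. Your tame argument presupposes a uniformizer, writing $L=K(\varpi)$ with $\varpi^e=\pi$, i.e.\ it is an argument for discretely valued fields, and it passes through strict henselization without verifying that the K\"ahler seminorm is unchanged under that (infinite, ind-\'etale) extension. The points of a discretely ringed adic space carry valuations of arbitrary rank with arbitrary, generally non-discrete, value groups; there tame extensions of the strictly henselian field are generated by finitely many radicals $a_1^{1/e_1},\dots,a_r^{1/e_r}$ with $e_j$ prime to $p$ rather than by a single root of a uniformizer, the infimum defining the seminorm need not be attained in $\Gamma_x$ (hence the rangers), and the bookkeeping that converts an $L$-representation into a $K$-representation of no larger maximum must be redone in that generality. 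That general statement (Temkin's metrization results treat the real-valued case you cite) is the substance of \cite[Proposition 7.9]{LogDiff}, and your sketch as written does not cover it; the unramified case via an \'etale generator with $f'(\theta)$ a unit is closer to complete but should likewise be phrased for general valuation rings, not only discrete ones. So the plan is sound and correctly locates the difficulty, but the two steps above are exactly the proof, and they are missing.
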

We also define a valuation on $\Omega^n_{A}$ with values in $\mathcal{R}_\Gamma$, generalizing \cref{def:ValuationOmega}.
\begin{definition}
	Let $(A,A^+)$ be a local Huber pair over a field $k$ of positive characteristic. Let $\Gamma$ be the value group associated to $(A,A^+)$. Let $n$ be an integer. The \emph{Kähler seminorm} on $\Omega_{A/k}^n$ is the map
	\[
	|\cdot|: \Omega_A^n\to \mathcal{R}_\Gamma
	\]
	defined by 
	\[
	|\omega|:= \inf\limits_{\omega=\sum f_idg_{1,i}\wedge \dots\wedge dg_{n,i}} \max\limits_i\big\{|f_i \cdot g_{1,i}\cdots g_{n,i}| \big\}.
	\]
\end{definition}
\begin{proposition}\label{prop:EquivalentDescriptionsOmega+n}
	For every integer $n$, we have the equality 
	\begin{equation}\label{eq:twoDefOmega+n}
			\Omega_{(A,A^+)}^{+,n}=\big\{ \omega \in \Omega_A^n ; |\omega|\leq 1 \big\}
	\end{equation}
\end{proposition}
\begin{proof}
	Let $\omega$ be a differential in $\Omega_{(A,A^+)}^{+,n}$. Then $\omega$ is a sum  of wedge products of elements of the form $fdg$ satisfying $|fg|\leq 1$ \cite[Lemma 7.3.]{LogDiff}(cf. \cref{lem:plusisLog}). Therefore $\omega=\sum\limits_{i}f_idg_{1,i}\wedge \dots \wedge dg_{n,i}$ such that $|f_ig_{1,i}\dots g_{n,i}|\leq 1$. Hence $\omega$ is in the RHS of  \eqref{eq:twoDefOmega+n}.
	
	Consider $\omega \in \Omega_A^n$ lying in the RHS of \eqref{eq:twoDefOmega+n}. According to \cite[Lemma 7.2]{LogDiff}, one may express $\omega$ as a sum $\sum\limits_{i}f_idg_{1,i}\wedge \dots \wedge dg_{n,i}$ such that $|f_ig_{1,i}\dots g_{n,i}|\leq 1$. Thus the proof of \cref{prop:EquivalentDescriptionsOmega+n} is reduced to the following claim.
	\begin{claim}\label{claim:EquivalentDescriptionsOmega+n}
		Let $f,g_1,\dots,g_n$ be elements in $A$ such that $|fg_1\dots g_n|\leq 1$, then $\eta:=fdg_1\wedge\cdots \wedge g_n\in \Omega_{(A,A^+)}^{+,n}$.
	\end{claim}
	We prove \cref{claim:EquivalentDescriptionsOmega+n} inductively. When $n=1$ this is true by definition. Assume that the claim is true for $n-1$. 
	
	\textrm{Case 1:} $|g_1|=\dots=|g_n|=0$. Then $fdg_1\wedge \dots \wedge d_{n-1}\in \Omega_{(A,A^+)}^{+,n-1}$ and $dg_n\in \Omega_{(A,A^+)}^{+}$. Hence $\eta\in\Omega_{(A,A^+)}^{+,n}$.
	
	\textrm{Case 2:} $|fg_1\dots g_{n-1}|=0$ and $|g_{n}|\neq 0$. Then express $\eta$ as the wedge product 
	\[
	(fg_n dg_1\wedge \dots\wedge dg_{n-1} )\wedge  (dg_n/g_n).
	\]
	Then $fg_ndg_1\wedge\dots \wedge dg_{n-1}\in \Omega_{(A,A^+)}^{+,n-1}$ due to the induction step and $dg_n/g_n\in \Omega_{(A,A^+)}^{+}$ by definition.
	
	\textrm{Case 3:} We may now assume, without loss of generality, that $|f|, |g_1|,\dots,|g_n| \neq 0$. Then 
	\[
	\eta= (fg_1\dots g_n)(dg_1/g_1)\wedge \dots \wedge (dg_n/g_n),
	\]
	lies in $\Omega_{(A,A^+)}^{+,n}$. 
\end{proof}
\subsection{Logarithmic differentials on adic spaces}
For every morphism of Huber pairs $(B,B^+)\to(A,A^+)$ we can define the associated logarithmic differential
\[
\Omega^{\textrm{log}}(\Spa(A,A^+))=\Omega_{(A,A^+)/(B,B^+)}^{\textrm{log}}=\Omega^{\llog}_{(A^+\cap A^\times\to A)/(B^+\cap B^\times\to B)}.
\]
In this article we will mostly consider $(B,B^+)=(k,k)$ for a  perfect field $k$ of characteristic $p>0$. 

We also define $\Omega^{n,\llog}:=\bigwedge^n\Omega^{\llog}$.
This is however not a sheaf, as shown in \cite[Example 4.4.]{LogDiff}.
\begin{lemma}[{\cite[Lemma 7.3.]{LogDiff}}]\label{lem:plusisLog}
	Let $k$ be a perfect field of characteristic $p$. Let $(A,A^+)$ be a local Huber pair.
	We have 
	\[
	\Omega^{n,\log}_{(A,A^+)}\cong \Omega^{n,+}_{(A,A^+)}.
	\]
\end{lemma}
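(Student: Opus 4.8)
The plan is to realise the map as ``forgetting the logarithmic structure'' and then to identify its image by a direct estimate on the Kähler seminorm. Write $M:=A^+\cap A^\times$. Recall that $\Omega^{\llog}_{(A,A^+)/k}$ is generated over $A^+$ by the symbols $da$ ($a\in A^+$) and $\dlog(m)$ ($m\in M$), subject to the relations of Lemma~\ref{DefinitionDifferentials}; since every $m\in M$ becomes invertible in $A$, the assignments $da\mapsto da$, $\dlog(m)\mapsto dm/m$ extend to a well-defined $A^+$-linear map
\[
\rho\colon\Omega^{\llog}_{(A,A^+)/k}\longrightarrow\Omega_{A/k},
\]
(it kills $d\alpha(m)-\alpha(m)\dlog(m)$ because $dm=m\cdot(dm/m)$ in $\Omega_{A/k}$), whence taking $n$-th wedge powers gives $\rho\colon\Omega^{\llog,n}_{(A,A^+)}\to\Omega^n_{A/k}$. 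I would first note that this lands inside $\Omega^{+,n}_{(A,A^+)}$: by submultiplicativity of the Kähler seminorm with respect to $\wedge$ it is enough to bound the generators, and $|da|_\Omega\le|1|_A\,|a|_A=|a|_A\le1$ for $a\in A^+$, while $|dm/m|_\Omega\le|m^{-1}|_A\,|m|_A=1$ for $m\in M$. So $\rho$ factors as $\Omega^{\llog,n}_{(A,A^+)}\to\Omega^{+,n}_{(A,A^+)}$, and everything reduces to proving that this map is bijective.

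For bijectivity I would pass to log smooth building blocks. Using the existence of log smooth presentations of discretely ringed adic spaces (cf.\ Remark~\ref{DescriptionOfOpens}), the local Huber pair $(A,A^+)$ is a filtered colimit of pairs $(R,R^+)$ attached to log smooth local situations: $R^+=\mathcal{O}_{\bar Y,\bar y}$ with $\bar Y$ smooth over $k$, $D=\bar Y\smallsetminus Y$ a simple normal crossing divisor through $\bar y$, and $R=\mathcal{O}_{Y,y}$ (equivalently $R^+$ with the equations of $D$ inverted). Both functors in question commute with this colimit: $\Omega^{\llog}$ because Kähler differentials commute with filtered colimits of rings, and $\Omega^{+}$ by compatibility of the Kähler seminorm with the transition maps of the system (the seminorm is an infimum over representations $\omega=\sum f_idg_i$ and the relevant value groups embed compatibly). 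Hence it suffices to treat the pair $(R,R^+)$.

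Now choose a regular system of parameters $T_1,\dots,T_d$ of $R^+$ with $D=V(T_1\cdots T_r)$. Then $\Omega^{\llog}_{R^+/k}$ is free over $R^+$ on $\dlog T_1,\dots,\dlog T_r,\,dT_{r+1},\dots,dT_d$, so $\Omega^{\llog,n}_{(R,R^+)}$ is free over $R^+$ on the wedges $\delta_I$ of these generators, and $\rho(\delta_I)$ is a unit multiple of $\bigwedge_{i\in I}dT_i$ in $\Omega^n_{R/k}$; since those form an $R$-basis of $\Omega^n_{R/k}$, the map $\rho$ is injective. For surjectivity onto $\Omega^{+,n}_{(R,R^+)}$ write $\eta\in\Omega^n_{R/k}$ uniquely as $\eta=\sum_I f_I\,\rho(\delta_I)$ with $f_I\in R$; one must show that $|\eta|_\Omega\le1$ at every point of $\Spa(R,R^+)$ forces all $f_I\in R^+$, i.e.\ that $\Omega^{+,n}_{(R,R^+)}$ is exactly the $R^+$-span of the $\rho(\delta_I)$. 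This is the genuinely technical point: a priori the infimum defining $|\eta|_\Omega$ could be strictly smaller than the ``monomial'' estimate $\max_I|f_I|_\Omega\,|\rho(\delta_I)|_\Omega$ because of cancellation among the many representations $\eta=\sum_j g_j\,dh_{j,1}\wedge\cdots\wedge dh_{j,n}$, and such cancellation could in principle let a non-integral form acquire seminorm $\le1$. Ruling this out is precisely the computation of the Kähler seminorm on a log smooth local ring via its monomial chart, in the spirit of Temkin's metrization theorem \cite{TemkinMetrization}: the forms $\rho(\delta_I)$ are ``orthogonal'' for every monomial valuation centred on $R^+$, the defining infimum is attained at the monomial representative, and hence $|\eta|_\Omega=\max_I|f_I|_\Omega\,|\rho(\delta_I)|_\Omega$ at each point; evaluating at the appropriate divisorial valuations then gives $f_I\in R^+$ for all $I$. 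Passing back to the colimit recovers the lemma.

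The main obstacle is exactly this last step --- showing that no cancellation in the seminorm infimum can occur, i.e.\ that the naive monomial estimate for $|\cdot|_\Omega$ is sharp on log smooth local rings. It is also the reason the reduction to log smooth models is essential: over the valuation ring $A^+$ one starts with there is no normal form for differential forms, whereas on a log smooth local ring the $dT_i/T_i$ (for the boundary branches) and the $dT_j$ behave like an orthogonal basis for all relevant valuations, so the estimate cannot be improved.
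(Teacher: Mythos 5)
There is a genuine gap, and also a mismatch with how the paper handles this statement. The paper does not prove the lemma at all: it is quoted from \cite[Lemma 7.3]{LogDiff}, where it is an \emph{unconditional} statement about a single local Huber pair, proved by a direct manipulation with the valuation of the pair (using, e.g., that every $a\in A\setminus A^+$ is a unit of $A$ with $a^{-1}\in A^+\cap A^\times$, so that any representation $\sum f_i\,dg_i$ with $|f_i||g_i|\le 1$ can be rewritten in terms of $da$, $a\in A^+$, and $\dlog m$, $m\in A^+\cap A^\times$). Your argument, by contrast, is conditional: the passage to log smooth stages via Remark \ref{DescriptionOfOpens} uses Conjectures \ref{ConjResolution3} and \ref{GeneralResolution}, which the lemma does not assume. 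This also inverts the logical architecture of the paper: the whole point is that the comparison $\Omega^{+,n}\cong\Omega^{\llog,n}$ is unconditional on local Huber pairs (this lemma), while it is only the global statement (Conjecture \ref{Hypothesis_L}, resp.\ \cite[Theorem 8.12]{LogDiff} for log smooth pairs) that needs resolution; you are using the latter kind of input to prove the former. Moreover the reduction itself is shaky: for a general point (higher-rank valuation, non-closed support) the stalk pair $(\mathcal{O}_{\mathcal{X},x},\mathcal{O}^+_{\mathcal{X},x})$ is not a filtered colimit of pairs of the simple form you describe, and ``$\Omega^+$ commutes with the colimit'' is not innocuous: the seminorm over $A$ is the infimum of the stagewise seminorms, so $|\omega|_\Omega\le 1$ in the limit does not visibly give $|\omega|_\Omega\le 1$ at some stage --- this is the same attainment problem you defer. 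There is also a quantifier mismatch at the end: membership in $\Omega^{+,n}_{(A,A^+)}$ is a condition at the valuation(s) of the local pair, whereas your final step needs $|\eta|_\Omega\le 1$ at \emph{every} point of $\Spa(R,R^+)$ for a fixed log smooth stage.

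More importantly, the mathematical core of the lemma --- the hard inclusion $\Omega^{+,n}_{(A,A^+)}\subseteq\mathrm{im}(\Omega^{\llog,n}_{(A,A^+)})$ --- is exactly the step you state without proof, namely the sharpness of the monomial estimate for the Kähler seminorm (``no cancellation in the defining infimum'') on a log smooth local ring; you yourself flag it as the main obstacle, so the proposal does not contain a proof. Note also that the characteristic-$p$ hypothesis in the statement is never used in your argument, yet in characteristic $p$ cancellation does occur: $d(g^p)=0$, so for instance $f\,dt=f\,d(t+s^p)$ can strictly decrease $\max_i|f_i||g_i|$ at a given valuation. This does not contradict the lemma (such representations can still be rewritten in terms of the allowed generators), but it shows that the ``orthogonality of $\dlog T_i$, $dT_j$ for every monomial valuation'' you invoke is precisely where the real work and the characteristic-$p$ input must enter; only the easy direction (your map $\rho$ landing in $\Omega^{+,n}$) is actually established in your write-up.
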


This leads to the proposition:
\begin{proposition}[{\cite[Proposition 7.11]{LogDiff}}]
	Let $\mathcal{X}$ be a discretely ringed adic space over a perfect field $k$ of characteristic $p$. Then $\Omega^{+,n}$ is the sheafification of $\Omega^{n,\llog}$.
\end{proposition}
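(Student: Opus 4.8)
The plan is to construct a natural morphism of presheaves $\rho\colon\Omega^{n,\llog}\to\Omega^{+,n}$ and to prove that it induces an isomorphism on every stalk. Granting this, the statement is purely formal: $\Omega^{+,n}$ is a sheaf on the open topology of $\mathcal X$ (by \cite[Proposition 7.9]{LogDiff} it is even a sheaf for the tame and strongly \'etale topology, and open coverings are tame coverings), so both the sheafification unit $\Omega^{n,\llog}\to(\Omega^{n,\llog})^{\mathrm{sh}}$ and $\rho$ induce isomorphisms on stalks; hence the factorisation $(\Omega^{n,\llog})^{\mathrm{sh}}\to\Omega^{+,n}$ is a stalkwise isomorphism of sheaves and therefore an isomorphism.

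To construct $\rho$, note that on an affinoid $\mathcal U=\Spa(A,A^{+})$ restriction of logarithmic forms to the trivial locus yields a canonical $k$-linear map $\Omega^{\llog,n}_{(A,A^{+})}\to\Omega^{n}_{A/k}=\Omega^{n}_{\mathcal X/k}(\mathcal U)$, and these maps are compatible with the restriction maps of the two presheaves, hence glue to $\rho_{0}\colon\Omega^{n,\llog}\to\Omega^{n}_{\mathcal X/k}$. To see that $\rho_{0}$ lands in the subsheaf $\Omega^{+,n}$ one checks, for a section $\eta$ over $\mathcal U$, that $|\rho_{0}(\eta)|_{y}\le 1$ for each $y\in\mathcal U$; this depends only on the germ at $y$, so passing to the local Huber pair $(\mathcal O_{\mathcal X,y},\mathcal O^{+}_{\mathcal X,y})$ it becomes the assertion that the image of $\Omega^{\llog,n}_{(\mathcal O_{\mathcal X,y},\mathcal O^{+}_{\mathcal X,y})}$ in $\Omega^{n}_{\mathcal O_{\mathcal X,y}/k}$ is contained in $\Omega^{n,+}_{(\mathcal O_{\mathcal X,y},\mathcal O^{+}_{\mathcal X,y})}$, which is part of \cite[Lemma 7.3]{LogDiff}. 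So $\rho_{0}$ factors as $\rho\colon\Omega^{n,\llog}\to\Omega^{+,n}$.

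It remains to compute the stalks at a point $y\in\mathcal X$. Logarithmic K\"ahler differentials and their exterior powers commute with filtered colimits of (pre)log rings, and the local Huber pair $(\mathcal O_{\mathcal X,y},\mathcal O^{+}_{\mathcal X,y})$ is the filtered colimit of the $(A,A^{+})$ over affinoid neighbourhoods $\Spa(A,A^{+})\ni y$; this gives
\[
(\Omega^{n,\llog})_{y}\;\cong\;\Omega^{\llog,n}_{(\mathcal O_{\mathcal X,y},\,\mathcal O^{+}_{\mathcal X,y})}.
\]
On the other hand $\Omega^{+,n}$ is a subsheaf of the coherent sheaf $\Omega^{n}_{\mathcal X/k}$, so $(\Omega^{+,n})_{y}$ injects into $\Omega^{n}_{\mathcal O_{\mathcal X,y}/k}$; since every germ there is represented by a section of $\Omega^{+,n}$ on some neighbourhood, that is, by a form of K\"ahler seminorm $\le 1$ at all points, its value at $y$ has seminorm $\le 1$, so this injection has image inside $\Omega^{n,+}_{(\mathcal O_{\mathcal X,y},\mathcal O^{+}_{\mathcal X,y})}$. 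Finally, \cite[Lemma 7.3]{LogDiff} says that the map $\Omega^{\llog,n}_{(\mathcal O_{\mathcal X,y},\mathcal O^{+}_{\mathcal X,y})}\to\Omega^{n}_{\mathcal O_{\mathcal X,y}/k}$ underlying $\rho$ at $y$ is injective with image exactly $\Omega^{n,+}_{(\mathcal O_{\mathcal X,y},\mathcal O^{+}_{\mathcal X,y})}$; combined with the two previous remarks this forces $(\Omega^{+,n})_{y}\to\Omega^{n,+}_{(\mathcal O_{\mathcal X,y},\mathcal O^{+}_{\mathcal X,y})}$ to be an isomorphism and $\rho_{y}$ to be an isomorphism onto it, so $\rho$ is a stalkwise isomorphism. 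The main obstacle I anticipate is this last stalk analysis for $\Omega^{n,\llog}$, i.e.\ matching the formula defining the presheaf on affinoids with the logarithmic differentials of the local Huber pair (in particular checking that the monoid $A^{+}\cap A^{\times}$ is compatible with the colimit), together with verifying that $\rho$ really agrees on stalks with the isomorphism of \cite[Lemma 7.3]{LogDiff}; once these compatibilities are established everything else is formal, and in particular no openness of the integrality loci is required.
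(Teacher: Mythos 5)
The paper itself gives no proof of this proposition; it is quoted directly from \cite[Proposition 7.11]{LogDiff}, so there is no in-paper argument to compare yours against. Your reconstruction --- build the canonical comparison map $\rho$ into $\Omega^n_{\mathcal{X}/k}$, identify the stalks of both $\Omega^{n,\llog}$ and $\Omega^{+,n}$ at a point $y$ with the corresponding objects of the local Huber pair $(\mathcal{O}_{\mathcal{X},y},\mathcal{O}^+_{\mathcal{X},y})$, and then invoke \cite[Lemma 7.3]{LogDiff} --- is the natural route, it is consistent with the ingredients this paper does quote, and I see no gap in it; the genuinely non-formal points are exactly the two you flag yourself, namely that $\colim\,(A^+\cap A^\times)=\mathcal{O}^+_{\mathcal{X},y}\cap\mathcal{O}^\times_{\mathcal{X},y}$ (true: invertibility is detected at a finite stage of the colimit, morphisms of Huber pairs respect the plus rings, and log K\"ahler differentials commute with filtered colimits of prelog rings), and that the isomorphism of Lemma 7.3 is induced by the same canonical map to $\Omega^n$ that defines $\rho$, without which your ``composite is an isomorphism'' step would not close. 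One caution about the formula as transcribed in this paper: read literally, $\Omega^{\llog}_{(A^+\cap A^\times\to A)/(k^\times\to k)}$ has its monoid landing in $A^\times$, so the associated log structure is trivial and the module would collapse to $\Omega_{A/k}$; the intended prelog ring has underlying ring $A^+$, and your ``restriction to the trivial locus'' map is precisely the comparison map for that convention, so your argument is unaffected, but pin this convention down when verifying the compatibility with Lemma 7.3.
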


\begin{remark}
	Under the same assumptions, we have the following inclusion 
	\[
	\Omega^{+,n}_{\mathcal{X}/k}\subseteq \Omega^{n,\llog}_{\mathcal{X}/k,\lim} \subseteq \Omega^{n}_{\mathcal{X}/k},
	\]
	as $\Omega^{n,\log}_{\mathcal{X}/k,\lim}$ is a sheaf that contains $\Omega^{n,\log}_{\mathcal{X}/k}$.
\end{remark}

If $\mathcal{X}=\Spa(X,\bar{X})$ is associated to a log smooth pair $(X,\bar{X})$, then
\[
\Omega^{+,n}_{\mathcal{X}/k}(\Spa(X,\bar{X}))\to  \Omega^{n,\llog}_{\mathcal{X}/k,\lim}(\Spa(X,\bar{X}))\to \Omega^{n,\llog}_{\bar{X}/k}(\llog \bar{X}\setminus X).
\]
is an isomorphism. This is \cite[Theorem 8.12]{LogDiff}.
\begin{conjecture}\label{Hypothesis_L}
	Given a smooth discretely ringed adic space $\mathcal{X}=\Spa(X,S)$ over $k$, we conjecture that 
	\[
	\Omega_{\mathcal{X},\lim}^{n,\llog}\cong \Omega^{n,+}_{\mathcal{X}} 
	\] 	
	for every $n\in \mathbb{N}$. 
\end{conjecture}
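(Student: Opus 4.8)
The plan is to reduce Conjecture~\ref{Hypothesis_L} to the log smooth case \cite[Theorem 8.12]{LogDiff} by the usual "check on a basis of opens" argument. First I would observe that both objects are genuine sheaves on the topological space $\Spa(X,S)$: $\Omega^{+,n}_{\mathcal{X}/k}$ is a sheaf (in fact in the tame and strongly \'etale topologies) by \cite[Proposition 7.9]{LogDiff}, and $\Omega^{n,\llog}_{\mathcal{X}/k,\lim}$ is a sheaf by \cite[Proposition 8.9]{LogDiff}; moreover they both sit inside $\Omega^{n}_{\mathcal{X}/k}$, with $\Omega^{+,n}_{\mathcal{X}/k}\subseteq \Omega^{n,\llog}_{\mathcal{X}/k,\lim}$. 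So it is enough to show that this canonical inclusion is a bijection on sections over every member of a basis of the topology of $\Spa(X,S)$.

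Second, I would produce the right basis. Granting Conjectures~\ref{ConjResolution3} and~\ref{ConjResolution2}, Remark~\ref{DescriptionOfOpens} shows that the affinoid opens $\mathcal{U}=\Spa(U,\bar{U})$ for which $(U,\bar{U})$ is a log smooth pair---that is, $\bar{U}$ is smooth over $k$, the complement $\bar{U}\setminus U$ is a simple normal crossing divisor, and $\bar{U}$ carries the compactifying log structure of $U\subset\bar{U}$---form a basis. Fix one such $\mathcal{U}$. Since $U$ is exactly the trivial locus of the log structure of $\bar{U}$, the slice category $(X,S)_{\llog}/\mathcal{U}$ has $\bar{U}$ as a final object, and therefore $\Omega^{n,\llog}_{\mathcal{X}/k,\lim}(\mathcal{U})=\Omega^{n,\llog}_{\bar{U}}(\bar{U})$. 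On the other hand $(U,\bar{U})$ is a log smooth presentation of $\mathcal{U}$, so $\mathcal{U}$ is smooth over $\Spa(k,k)$ and \cite[Theorem 8.12]{LogDiff} applies verbatim to $\mathcal{U}$: the inclusion $\Omega^{+,n}_{\mathcal{X}/k}(\mathcal{U})\to\Omega^{n,\llog}_{\mathcal{X}/k,\lim}(\mathcal{U})$ is an isomorphism.

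Combining the two steps, $\Omega^{+,n}_{\mathcal{X}/k}\hookrightarrow\Omega^{n,\llog}_{\mathcal{X}/k,\lim}$ is an isomorphism on every open in a basis of $\Spa(X,S)$, hence an isomorphism of sheaves, which is the assertion of Conjecture~\ref{Hypothesis_L}. In particular, since Conjecture~\ref{ConjResolution3} is known in dimension $\leq 3$ and Conjecture~\ref{ConjResolution2} for centres of dimension $\leq 2$, the same argument proves the statement unconditionally whenever $\dim X\leq 3$.

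The \textbf{main obstacle} is precisely the use of resolution of singularities. The sheaf $\Omega^{n,\llog}_{\mathcal{X}/k,\lim}$ is built from the entire category $(X,S)_{\llog}$ of log smooth models, and the only opens on which it is known to agree with $\Omega^{+,n}_{\mathcal{X}/k}$ are those attached to log smooth pairs; covering an arbitrary smooth discretely ringed adic space by such opens is exactly what forces Conjectures~\ref{ConjResolution3} and~\ref{ConjResolution2} into the proof. An unconditional argument would require a handle on $\Omega^{n,\llog}_{\mathcal{X}/k,\lim}$, or a local computation of the Kähler seminorm defining $\Omega^{+,n}_{\mathcal{X}/k}$, that does not route through a smooth compactification with normal crossing boundary; it is not clear how to obtain one, which is why the statement is left as a conjecture.
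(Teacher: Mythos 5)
Your argument is exactly the paper's: the statement is left as a conjecture, but the proposition immediately following it proves it under Conjectures \ref{ConjResolution3} and \ref{ConjResolution2} by covering $\mathcal{X}$ with opens $\Spa(U,\bar{U})$ coming from log smooth pairs, on which $\Omega^{+,n}$ and $\Omega^{n,\llog}_{\lim}$ agree as subsheaves of $\Omega^{n}$ by \cite[Theorem 8.12]{LogDiff}. Your write-up is correct and, if anything, spells out the basis argument and the low-dimensional unconditional cases more explicitly than the paper does.
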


\begin{proposition}\label{Hypothesis_L_Resolution}
	Under the assumption of resolution of singularities (namely \cref{ConjResolution3} and \cref{ConjResolution2}) over $k$ ) \cref{Hypothesis_L} holds. 
\end{proposition}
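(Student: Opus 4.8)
The plan is to play the two sheaves against a well-chosen basis of the topology of $\Spa(X,S)$. Both $\Omega^{+,n}_{\mathcal{X}/k}$ and $\Omega^{n,\llog}_{\mathcal{X}/k,\lim}$ are sheaves on the topological space $\Spa(X,S)$, and, as noted in the Remark preceding Conjecture \ref{Hypothesis_L} (via the universal property of sheafification applied to $\Omega^{n,\llog}\to\Omega^{n,\llog}_{\lim}$, the target being a sheaf), there is a canonical inclusion of sheaves $\Omega^{+,n}_{\mathcal{X}/k}\hookrightarrow\Omega^{n,\llog}_{\mathcal{X}/k,\lim}$. A morphism of sheaves that is bijective on sections over every member of a basis of the topology is an isomorphism, so it suffices to produce a basis on which this inclusion is an equality.

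\textbf{Steps.} First I would invoke Remark \ref{DescriptionOfOpens}: granting Conjectures \ref{ConjResolution3} and \ref{ConjResolution2}, the affinoid opens $\mathcal{U}=\Spa(U,\bar U)$ with $(U,\bar U)$ a log smooth pair form a basis of the topology of $\Spa(X,S)$. Next, for such a $\mathcal{U}$, the log smooth scheme $\bar U$ is a final object of $(X,S)_\llog/\mathcal{U}$, so by the Remark following the definition of $(-)_{\lim}$ we get $\Omega^{n,\llog}_{\mathcal{X}/k,\lim}(\mathcal{U})=\Omega^{n,\llog}_{\bar U}(\bar U)$. Finally I would quote \cite[Theorem 8.12]{LogDiff}, which asserts precisely that $\Omega^{+,n}_{\mathcal{X}/k}(\Spa(U,\bar U))\to\Omega^{n,\llog}_{\mathcal{X}/k,\lim}(\Spa(U,\bar U))$ is an isomorphism whenever $(U,\bar U)$ is a log smooth pair. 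Hence the inclusion of sheaves is an isomorphism over every element of the basis, so it is an isomorphism; as $n$ was arbitrary, Conjecture \ref{Hypothesis_L} follows.

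\textbf{Main obstacle.} The difficulty does not lie in this formal chain but in the geometric input packaged into Remark \ref{DescriptionOfOpens}, namely producing the basis of log smooth affinoid opens. Concretely: one starts from a small affine open $U\subseteq X$, uses Nagata compactification (Proposition \ref{NagataCompactification}) to embed it in some compactification $\bar U$ over $S$, then applies Conjecture \ref{ConjResolution3} to resolve the singularities of $\bar U$ without altering its regular locus, and Conjecture \ref{ConjResolution2} to turn the boundary $\bar U\backslash U$ into a simple normal crossing divisor; a further blow-up in the boundary lets one in addition take $U\to\bar U$ affine, so that $\Spa(U,\bar U)$ is affinoid and $(U,\bar U)$ is a log smooth pair (using that $X$, hence $U$, is smooth over $k$). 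This is exactly where the two resolution hypotheses are consumed; once they are granted, the comparison reduces entirely to \cite[Theorem 8.12]{LogDiff} and the formal properties of the functor $(-)_{\lim}$.
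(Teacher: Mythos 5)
Your argument is correct and is essentially the paper's own proof: under Conjectures \ref{ConjResolution3} and \ref{ConjResolution2} the opens $\Spa(U,\bar U)$ with $(U,\bar U)$ a log smooth pair form a basis (Remark \ref{DescriptionOfOpens}), and on each such open the inclusion $\Omega^{+,n}\subseteq\Omega^{n,\llog}_{\lim}$ is an equality by \cite[Theorem 8.12]{LogDiff}, whence the sheaves agree. The only cosmetic difference is that you phrase the conclusion via a morphism being bijective on a basis, while the paper phrases it as two subsheaves of $\Omega^n$ agreeing on a covering; these are the same argument.
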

\begin{proof}
	Under the given assumptions, every open in $\mathcal{X}$ is covered by adic spaces of the form $\mathcal{U}=\Spa(U,\bar{U})$, for which the pair $(U,\bar{U})$ is a log smooth presentation of $\mathcal{U}$. Hence over this covering $\Omega^{+,n}$ and $\Omega^{n,\llog}_{\lim}$ agree as subsheaves of $\Omega^{n}$. The result follows.
\end{proof}
\begin{corollary}
	Let $\mathcal{X}=\Spa(X,S)$ be a smooth discretely ringed adic space over $k$. Under the assumption of resolution of singularities over $k$, we have $Z\Omega^{+,n}_{\mathcal{X}/k}\cong Z\Omega^{n,\llog}_{\mathcal{X}/k,\lim}$ and $B\Omega^{+,n}_{\mathcal{X}/k}\cong B\Omega^{n,\llog}_{\mathcal{X}/k,\lim}$ for every $n\in \mathbb{N}$.
\end{corollary}
\begin{proof}
	Under the assumption of resolution of singularities, we have $\Omega^{+,n}_{\mathcal{X}/k}\cong \Omega^{n,\llog}_{\mathcal{X}/k,\lim}$ (\cref{Hypothesis_L_Resolution}). The result follows by taking kernels and images of the $d$ map, and using the fact that $d$ is compatible with the isomorphism $\Omega^{+,n}_{\mathcal{X}/k}\cong \Omega^{n,\llog}_{\mathcal{X}/k,\lim}$ on opens of the form $\Spa(U,\bar{U})$ where $(U,\bar{U})$ is a log smooth presentation.
\end{proof}\newpage
\subsection{The Cartier Operator}
\subsubsection*{The Cartier operator for logarithmic differentials}
Let $k$ be a perfect field of positive characteristic. We write $S=\Spec(k)$.

Let $\mathcal{X}=\Spa(X,T)$ be a smooth discretely ringed adic space over $k$. Assume that $\mathcal{X}=\Spa(X,\bar{X})$ where $\bar{X}$ is a log smooth log scheme with the compactifying log structure associated to the open subscheme $X$. Denote the open immersion $X\to \bar{X}$ by $j$.

We start by introducing some notation. Consider the absolue Frobenius endomorphism
\[
F_{\textrm{abs}}: X\to X
\] 
induced by sending a section $\alpha$ in $\mathcal{O}_X$ to $\alpha^p$. We denote by $X^{(p)}$ the pullback
\[
\begin{tikzcd}
	X^{(p)}\ar[r,"W"]\arrow[d,"\pi^{(p)}"]&X\ar[d,"\pi"]\\
	S\ar[r,"F_{\textrm{abs}}"]&S
\end{tikzcd}
\]

The morphism $W$ induces an isomorphism on $X^{(p)}\to X$. The structure morphism $\pi^{(p)}:X^{(p)}\to S$ is associated to the morphism $\mathcal{O}_S\to \mathcal{O}_{X^{(p)}}(\cong\mathcal{O}_X):f\mapsto f^{\frac{1}{p}}$. On the level of topological spaces, the morphism $W$ is just the identity.

The absolute Frobenius $F_{\textrm{abs}}:X\to X$ induces the relative Frobenius $F:X\to X^{(p)}$. 

Now note that $\Omega_{X^{(p)}/S}=F_*\Omega_{X/S}$. Similarly, if $D\subseteq X$ is a normal crossing divisor, then $\Omega_{X^{(p)}/S}(\llog D^{(p)})=F_*\Omega_{X/S}(\llog D)$.

Let us first recall a theorem of Cartier, see \cite[Theorem 7.2]{KatzNilpotent}.
\begin{proposition}[Inverse Cartier operator]
	Assume that $X$ is smooth over the prefect field $k$. 
	
	There exists for each $n\in \mathbb{N}$ an isomorphism 
	\[
	\bar{C}^{-1}:\Omega^n_{X^{(p)}/S}\to F_*\big(Z\Omega^n_{X/S}/B\Omega^n_{X/S}\big)
	\]
	of $\mathcal{O}_{X^{(p)}}$-modules, which satisfies
	\begin{itemize}
		\item $\bar{C}^{-1}(1)=1$ when $n=0$.
		\item $\bar{C}^{-1}(df)=[f^{p-1}df]$ when $n=1$.
		\item $\bar{C}^{-1}(\omega \wedge \eta) = \bar{C}^{-1}(\omega)\wedge \bar{C}^{-1}(\eta)$.
	\end{itemize}
	If on the other hand $\bar{X}$ has the structure of a log smooth log scheme then the same holds for the log differentials:
	\[
	\bar{C}^{-1}: \Omega^{n,\llog}_{\bar{X}^{(p)}/S}\to F_*(Z\Omega^{n,\llog}_{\bar{X}/S}/B\Omega^{n,\log}_{\bar{X}/S}).
	\]
	This isomorphism is compatible with 
	\[
	j_*\bar{C}^{-1}: j_*\Omega^n_{X^{(p)}/S}\to j_*F_*(Z\Omega^n_{X/S}/B\Omega^n_{X/S}),
	\]
	where $j_*:X\to \bar{X}$ is the open immersion associated to the trivializing subscheme $X$ of $X$.
	\end{proposition}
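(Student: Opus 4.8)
The plan is to reduce the statement to an explicit computation on an étale-local chart and then to globalize by uniqueness of an operator with the listed properties. The assertion for the ordinary differentials $\Omega^n_{X/S}$ is precisely the classical theorem of Cartier, \cite[Theorem 7.2]{KatzNilpotentCA}, and I would recall its proof only as far as is needed for the logarithmic statement and the $j_*$-compatibility. Since $X$ is smooth over the perfect field $k$, étale-locally on $X$ we may choose coordinates $t_1,\dots,t_n$, so that $\Omega^1_{X/S}$ is free on $dt_1,\dots,dt_n$. One then defines $\bar{C}^{-1}$ by $\bar{C}^{-1}(1)=1$ and $\bar{C}^{-1}(df)=[f^{p-1}df]$, extending it to a morphism of graded algebras which is $\mathcal{O}_{X^{(p)}}$-linear (equivalently, $p$-semilinear over $\mathcal O_X$: $\bar{C}^{-1}(a\,\omega)=a^p\,\bar{C}^{-1}(\omega)$); note that $t^{p-1}dt$, and hence every element of the image, is a closed form. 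The one genuinely non-formal input is that $f\mapsto[f^{p-1}df]$ is additive modulo $B\Omega^1_{X/S}$: indeed $(f+g)^{p-1}d(f+g)-f^{p-1}df-g^{p-1}dg=d\big(\sum_{i=1}^{p-1}c_i\,f^i g^{p-i}\big)$ with $c_i\equiv\tfrac1p\binom pi\pmod p$, which together with $d(f^p)=0$ shows that the recipe is independent of the chosen coordinates. Evaluating on the explicit bases one sees that $\bar{C}^{-1}$ sends a basis of $\Omega^n_{X^{(p)}/S}$ to a basis of $F_*(Z\Omega^n_{X/S}/B\Omega^n_{X/S})$; this simultaneously shows that the target is locally free and that $\bar{C}^{-1}$ is an isomorphism, and uniqueness of a map with the stated three properties lets the local constructions glue.

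For the logarithmic statement I would run the same argument with charts in place of coordinates. After an étale localization, $\bar{X}$ has coordinates $t_1,\dots,t_n$ with $\bar{X}\setminus X=V(t_1\cdots t_r)$, so by Lemma \ref{DefinitionDifferentials} the module $\Omega^{1,\llog}_{\bar{X}/S}$ is free on $d\log t_1,\dots,d\log t_r,\,dt_{r+1},\dots,dt_n$, with $dt_i=t_i\,d\log t_i$ for $i\le r$. Define $\bar{C}^{-1}(d\log t_i)=[d\log t_i]$ for $i\le r$ (this makes sense since $d\log t_i$ is $d$-closed) and $\bar{C}^{-1}(dt_j)=[t_j^{p-1}dt_j]$ for $j>r$, and extend as before. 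Well-definedness now additionally requires that $d\log(uv)=d\log u+d\log v$ be respected, which is automatic since it is an equality already at the level of forms, hence of cohomology classes; consistency with the values on the $dt_i$ for $i\le r$ is the computation $\bar{C}^{-1}(t_i\,d\log t_i)=t_i^p[d\log t_i]=[t_i^{p-1}dt_i]$; and coordinate independence together with the Leibniz relation reduces verbatim to the non-logarithmic case. The isomorphism and gluing steps go through unchanged.

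Finally, for the compatibility with $j_*\bar{C}^{-1}$: over the trivializing open $X\subset\bar{X}$ the log structure is trivial, so $\Omega^{\bullet,\llog}_{\bar{X}/S}|_X=\Omega^{\bullet}_{X/S}$, and the logarithmic recipe restricts to the ordinary one. Indeed, applying the ordinary $\bar{C}^{-1}$ to $d\log t_i=t_i^{-1}dt_i$ (with $t_i$ invertible on $X$) gives $t_i^{-p}[t_i^{p-1}dt_i]=[t_i^{-1}dt_i]=[d\log t_i]$, which is exactly the value prescribed by the logarithmic construction; hence the square relating the log operator on $\bar{X}$ and $j_*$ of the ordinary operator on $X$ commutes on the overlap, and by the sheaf property it commutes after applying $j_*$. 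The main obstacle in all of this is purely the well-definedness and gluing of the local recipe — that the prescriptions on generators extend to a single semilinear algebra homomorphism respecting every defining relation of $\Omega^n_{X/S}$, resp.\ $\Omega^{n,\llog}_{\bar{X}/S}$, and are compatible under étale coordinate changes; the one nontrivial computational ingredient, the additivity of $f\mapsto[f^{p-1}df]$ modulo exact forms, is classical.
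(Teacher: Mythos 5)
Your proposal is correct in substance, but note that the paper does not prove this proposition at all: it simply recalls the classical statement from \cite[Theorem 7.2]{KatzNilpotentCA} and asserts the logarithmic analogue (which is Kato's logarithmic Cartier isomorphism for log smooth schemes of Cartier type; Cartier type is automatic here because the log structure comes from an snc divisor over a perfect field). So your route is genuinely different in that it reconstructs the argument rather than citing it. What you supply --- the additivity of $f\mapsto[f^{p-1}df]$ modulo $B\Omega^1$ via the universal polynomial $\tfrac1p\big((X+Y)^p-X^p-Y^p\big)$, the chart-level definition $\bar{C}^{-1}(\dlog t_i)=[\dlog t_i]$ with the consistency check $\bar{C}^{-1}(t_i\,\dlog t_i)=[t_i^{p-1}dt_i]$, the unit computation $u^{-p}[u^{p-1}du]=[\dlog u]$ giving chart-independence, and the restriction argument for the $j_*$-compatibility --- is exactly the standard construction, and the $j_*$-compatibility paragraph is the only part the paper actually needs beyond the citations. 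What your write-up buys is self-containedness; what the paper's citation buys is that the genuinely nontrivial input is delegated cleanly.

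That nontrivial input is the one place where your sketch is thin: the assertion that ``evaluating on the explicit bases one sees that $\bar{C}^{-1}$ sends a basis to a basis'' is the actual content of Cartier's theorem. It requires computing $Z\Omega^\bullet/B\Omega^\bullet$ locally, which one does by using \'etale coordinates to reduce to a polynomial algebra, where Frobenius makes $\mathcal{O}_X$ finite free over $\mathcal{O}_{X^{(p)}}$ with monomial basis, followed by a K\"unneth-type argument; and in the logarithmic chart the rank-one computation changes (for $k[t]$ with log pole at the origin, $Z^1/B^1$ is free over $k[t^p]$ on $[\dlog t]$ rather than on $[t^{p-1}dt]$), so ``the isomorphism step goes through unchanged'' is not literally true and should either be replaced by this short computation or by a citation to Kato's log Cartier isomorphism. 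Likewise, for gluing in the log case the three listed properties do not by themselves determine the map (they say nothing about $\dlog$ of monoid sections), so you should either add the property $\bar{C}^{-1}(\dlog m)=[\dlog m]$ to the uniqueness statement or rely, as you implicitly do, on the direct chart-independence check. With those two points made explicit your argument is complete.
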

	We apply the fact that $\Omega_{X^{(p)}/S}^{n,\llog}$ is isomorphic to $F_*\Omega_{X/S}^{n,\llog}$ as an $\mathcal{O}_{X^{(p)}}$-module, and hence as a group, to directly obtain the following proposition.
\begin{proposition}\label{CartierOnLog}
	There exists a unique family of additive maps $C:Z\Omega_{\bar{X}/k}^{n,\llog}\to\Omega_{\bar{X}/k}^{n,\llog}$ with the properties 
	\begin{itemize}
		\item $C(1)=1$ in $\mathcal{O}_{\bar{X}}=\Omega_{\bar{X}/k}^{0,\llog}$,
		\item $C(f^p \omega)=fC(\omega)$ for each $f\in \Omega_X, \omega \in Z\Omega_{X/k}^{r,\llog}$,
		\item $C(\omega\wedge \omega')=C(\omega)\wedge C(\omega')$ for closed $\omega$ and $\omega'$ in $Z\Omega_{\bar{X}/k}^{r,\llog}$,
		\item $C(\omega)=0$ if and only if $\omega=d\eta$, where $\eta$ is a logarithmic differential,
		\item $C(f^{p-1}df)=df$.
	\end{itemize}
	  Furthermore, this is compatible with the pushforward of the Cartier operator
	  \[
	  j_*C: j_*Z\Omega_{X/k}\to j_*\Omega_{X/k}.
	  \]
\end{proposition}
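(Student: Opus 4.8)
The plan is to obtain $C$ simply by inverting the logarithmic inverse Cartier operator $\bar{C}^{-1}$ of the preceding proposition. Since $k$ is perfect, $\Omega^{n,\llog}_{\bar{X}^{(p)}/S}$ is identified with $F_*\Omega^{n,\llog}_{\bar{X}/S}$, and under this identification $\bar{C}^{-1}$ becomes an isomorphism of sheaves of abelian groups
\[
\bar{C}^{-1}\colon \Omega^{n,\llog}_{\bar{X}/S}\;\xrightarrow{\ \sim\ }\;Z\Omega^{n,\llog}_{\bar{X}/S}/B\Omega^{n,\llog}_{\bar{X}/S}.
\]
I then \emph{define} $C\colon Z\Omega^{n,\llog}_{\bar{X}/S}\to\Omega^{n,\llog}_{\bar{X}/S}$ as the composite of the canonical projection $Z\Omega^{n,\llog}_{\bar{X}/S}\to Z\Omega^{n,\llog}_{\bar{X}/S}/B\Omega^{n,\llog}_{\bar{X}/S}$ with the inverse of $\bar{C}^{-1}$; it is additive because $\bar{C}^{-1}$ is.

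The five properties are then each a translation of a property of $\bar{C}^{-1}$ through these identifications. The normalisation $C(1)=1$ and the formula $C(f^{p-1}df)=df$ are immediate from $\bar{C}^{-1}(1)=1$ and $\bar{C}^{-1}(df)=[f^{p-1}df]$. Multiplicativity of $C$ on closed forms follows from multiplicativity of $\bar{C}^{-1}$. The identity $C(f^p\omega)=fC(\omega)$ is the content of the $\mathcal{O}_{\bar{X}^{(p)}}$-linearity of $\bar{C}^{-1}$, once one unwinds that, under the identification $\mathcal{O}_{\bar{X}^{(p)}}\cong\mathcal{O}_{\bar{X}}$ provided by perfectness of $k$, the relative Frobenius induces the $p$-th power map on structure sheaves, so that the twist lands the $p$-th power on the correct side. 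Finally, $C$ is by construction the composite of a surjection with kernel $B\Omega^{n,\llog}_{\bar{X}/S}$ and an isomorphism, whence $\ker C=B\Omega^{n,\llog}_{\bar{X}/S}=d\big(\Omega^{n-1,\llog}_{\bar{X}/S}\big)$, which is the fourth property.

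Compatibility with $j_*C$ is then formal: the preceding proposition states that $\bar{C}^{-1}$ is compatible with $j_*\bar{C}^{-1}\colon j_*\Omega^n_{X^{(p)}/S}\to j_*F_*\big(Z\Omega^n_{X/S}/B\Omega^n_{X/S}\big)$, and passing to inverses and precomposing with the (evidently $j_*$-compatible) projections shows that $C$ restricts to $j_*C$ over the trivial locus $X$. For uniqueness, suppose $C'$ satisfies the same properties, and work étale-locally on a chart so that $\Omega^{1,\llog}_{\bar{X}/k}$ is free with basis $\dlog t_1,\dots,\dlog t_r,dt_{r+1},\dots,dt_m$ with $D=V(t_1\cdots t_r)$. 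From $t_i^{p-1}dt_i=t_i^p\,\dlog t_i$ the formulas $C(f^p\omega)=fC(\omega)$ and $C(f^{p-1}df)=df$ force $C(\dlog t_i)=\dlog t_i$, and together with multiplicativity they force the value of $C$ on every wedge product $\mu$ of the $\dlog t_i$ and the $t_j^{p-1}dt_j$. By the Cartier isomorphism, $Z\Omega^{n,\llog}_{\bar{X}/k}/B\Omega^{n,\llog}_{\bar{X}/k}$ is generated by the classes of such $g^p\mu$; hence every local section of $Z\Omega^{n,\llog}_{\bar{X}/k}$ has the form $\sum_i g_i^p\mu_i+d\eta$, and $C\big(\sum_i g_i^p\mu_i+d\eta\big)=\sum_i g_iC(\mu_i)$ is determined independently of the chosen operator, so $C=C'$. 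The main obstacle is not depth but care: one must keep the Frobenius twist $\bar{X}^{(p)}$ straight so that $\mathcal{O}$-linearity of $\bar{C}^{-1}$ yields $C(f^p\omega)=fC(\omega)$ with the $p$-th power in the right place, and the uniqueness step relies on the explicit local description of closed logarithmic forms furnished by the Cartier isomorphism.
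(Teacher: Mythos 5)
Your proposal is correct and follows essentially the same route as the paper: both obtain $C$ by composing the projection $Z\Omega^{n,\llog}\to Z\Omega^{n,\llog}/B\Omega^{n,\llog}$ with the inverse of the logarithmic inverse Cartier operator $\bar{C}^{-1}$, translating its $\mathcal{O}_{\bar{X}^{(p)}}$-linearity, multiplicativity and bijectivity into the listed properties. Your explicit chart-level uniqueness argument merely spells out what the paper compresses into the remark that unicity of $\bar{C}^{-1}$ is equivalent to that of $C$.
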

\begin{proof}
	This follows directly from the previous proposition. The $\mathcal{O}_{X{(p)}}$-linearity translates to the second condition. The fact that $\bar{C}^{-1}$ is an isomorphism corresponds to the fourth condition. The uniqueness of $\bar{C}^{-1}$ is equivalent to that of $C$.
\end{proof}
The next proposition is a motivation for the use of logarithmic differentials.
\begin{proposition}[{\cite[Lemma 2.4.6]{sato2007logarithmic}}]\label{C-1ShortExactLog}
	Let $X$ be a log smooth log scheme over a perfect field $k$. The sequence 
	\[
	0\to j_*\nu_{X_{\textrm{tr}}}(r)\to Z\Omega^{r,\log}_{X/k}\xrightarrow{C-1}\Omega_{X/k}^{r,\log}\to 0
	\]
	is exact in  $\Sh(X_\et)$. The sheaf $j_*\nu_{X_{\textrm{tr}}}(r)$ is the pushforward of $\nu_{X_{\textrm{tr}}}$ via $j: X_\textrm{tr}\to X$. 	
\end{proposition}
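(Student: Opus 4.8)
The plan is to verify the three exactness assertions \'etale-locally on $X$, where they may be checked on stalks. Since $X$ is log smooth over the perfect field $k$, I would first pass to an \'etale chart in which $X$ is smooth over $k$ and the boundary $D:=X\setminus X_{\mathrm{tr}}$ is a simple normal crossing divisor $D=D_1\cup\cdots\cup D_n$; then $\Omega^{\bullet,\log}_{X/k}=\Omega^{\bullet}_{X/k}(\log D)$, all the sheaves in sight are coherent and locally free, and the middle term $Z\Omega^{r,\log}_{X_{\mathrm{tr}}/k}$ is the sheaf of closed logarithmic $r$-forms on $X$, regarded inside $j_*Z\Omega^{r}_{X_{\mathrm{tr}}/k}$ along $j\colon X_{\mathrm{tr}}\hookrightarrow X$. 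On $X_{\mathrm{tr}}$ the log structure is trivial, $C$ is the classical Cartier operator, and the sequence restricts to Milne's Artin--Schreier sequence $0\to\nu_{X_{\mathrm{tr}}}(r)\to Z\Omega^{r}_{X_{\mathrm{tr}}/k}\xrightarrow{C-1}\Omega^{r}_{X_{\mathrm{tr}}/k}\to 0$, exact in $\Sh(X_{\mathrm{tr},\et})$ by the classical theory. Thus everything reduces to (i) identifying $\ker(C-1)$ on $Z\Omega^{r,\log}_{X/k}$ with $j_*\nu_{X_{\mathrm{tr}}}(r)$, and (ii) showing $C-1$ is surjective onto $\Omega^{r,\log}_{X/k}$.

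For (i): since $\Omega^{r,\log}_{X/k}$ and $Z\Omega^{r,\log}_{X/k}$ are locally free and $X_{\mathrm{tr}}$ is dense, the restriction maps into $j_*\Omega^{r}_{X_{\mathrm{tr}}/k}$ are injective, and $C-1$ on $X$ restricts to $C-1$ on $X_{\mathrm{tr}}$; hence $\ker(C-1)=Z\Omega^{r,\log}_{X/k}\cap j_*\nu_{X_{\mathrm{tr}}}(r)$ inside $j_*\Omega^{r}_{X_{\mathrm{tr}}/k}$, and it remains only to show $j_*\nu_{X_{\mathrm{tr}}}(r)\subseteq Z\Omega^{r,\log}_{X/k}$, i.e.\ that a $C$-fixed closed form on $X_{\mathrm{tr}}$ has at most logarithmic poles along $D$. \'Etale-locally on $X_{\mathrm{tr}}$ such a form is a sum of wedges $\dlog u_1\wedge\cdots\wedge\dlog u_r$ with the $u_i$ invertible on $X_{\mathrm{tr}}$; as $X$ is log regular, hence normal, each $u_i$ is a section of $\mathcal{M}_X^{\gp}$, so $\dlog u_i$ is a genuine logarithmic differential on $X$, and by the identity $C(f^{p-1}df)=df$ of Proposition~\ref{CartierOnLog} these $\dlog$-wedges are closed and $C$-fixed; one concludes by \'etale descent. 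Alternatively I would run this d\'evissage through the residue short exact sequences of Proposition~\ref{ResidueSequences} and Corollary~\ref{ResidueSequenceClosedForms}, inducting on the number of irreducible components of $D$ to reduce to the case $D=\emptyset$, i.e.\ to Milne's classical sequence on the smooth scheme $X$, using that $C$ is compatible with the residue maps.

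Part (ii) is where the real work lies and is the main obstacle. It is again \'etale-local, and it is the logarithmic counterpart of the classical surjectivity of $1-C$ on $Z\Omega^{r}$. I would deduce it from the logarithmic Cartier isomorphism $Z\Omega^{r,\log}_{X/k}/B\Omega^{r,\log}_{X/k}\xrightarrow{\sim}\Omega^{r,\log}_{X/k}$ induced by $C$ (the inverse of the isomorphism $\bar C^{-1}$ stated above), combined with a d\'evissage along the canonical filtration $B_1\Omega^{r,\log}\subseteq B_2\Omega^{r,\log}\subseteq\cdots\subseteq Z_2\Omega^{r,\log}\subseteq Z_1\Omega^{r,\log}=Z\Omega^{r,\log}_{X/k}$ by iterated (co)boundary subsheaves, on whose graded pieces $C$ is computed via the Cartier isomorphism; this reduces surjectivity to the case $r=0$, namely surjectivity of the Artin--Schreier operator $F-1$ on $\mathcal{O}_X$ in the \'etale topology, which holds because $T\mapsto T^{p}-T-a$ defines a finite \'etale covering. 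The logarithmic directions contribute only $\dlog$-classes, which are fixed by $C$ and so do not interfere, so the classical argument of Illusie and Milne adapts; this is exactly the content of \cite[Lemma~2.4.6]{sato2007logarithmic}. Once the three exactness statements are verified on an \'etale-local log smooth chart they hold in $\Sh(X_\et)$, which finishes the argument; the subtle point throughout is making the Cartier-and-Artin--Schreier d\'evissage of Step~(ii) go through cleanly in the presence of the logarithmic poles.
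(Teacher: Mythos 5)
The paper offers no internal proof of this proposition --- it is quoted from \cite[Lemma 2.4.6]{sato2007logarithmic} --- so your sketch has to stand on its own, and its crucial step has a genuine gap. The problem is part (i), the inclusion $j_*\nu_{X_{\mathrm{tr}}}(r)\subseteq Z\Omega^{r,\llog}_{X/k}$, i.e.\ that a $C$-fixed closed form on the interior has at most logarithmic poles along $D$. A section of $j_*\nu_{X_{\mathrm{tr}}}(r)$ over an \'etale $U\to X$ is a section of $\nu(r)$ over $U_{\mathrm{tr}}$, and it is a sum of wedges $\dlog u_1\wedge\cdots\wedge\dlog u_r$ only after passing to a further \'etale cover $V\to U_{\mathrm{tr}}$; the $u_i$ are units on $V$, not on $U_{\mathrm{tr}}$, and $V\to U_{\mathrm{tr}}$ need not extend to an \'etale cover of $U$ --- it may be wildly ramified along $D$, and controlling exactly this failure is the whole content of the statement. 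So the assertion ``each $u_i$ is a section of $\mathcal{M}_X^{\gp}$'' is unjustified, and no bound on the poles along $D$ follows. The standard repair is different: since $\Omega^{r,\llog}_{X/k}$ is locally free and $X$ normal, membership can be tested at the codimension-one points of $D$ after strict henselization; there the stalk of $j_*\nu(r)$ is $\nu(r)(K)$ with $K$ the fraction field of a strictly henselian discrete valuation ring, and the Bloch--Gabber--Kato theorem ($\dlog\colon K^M_r(K)/p\xrightarrow{\sim}\nu(r)(K)$) gives a presentation by $\dlog$'s of elements of $K^\times$, which visibly lie in $\Omega^{r,\llog}$. This field-level input (equivalently Kato's filtration of $\nu(r)$ of a local field) is precisely what Sato's argument rests on and is missing from your sketch; your alternative one-sentence d\'evissage through Proposition \ref{ResidueSequences} and Corollary \ref{ResidueSequenceClosedForms} is not carried out either (one must show the residue of a $C$-fixed form is again $C$-fixed and establish exactness at the middle of the corresponding sequence for $\nu$).

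By contrast, part (ii), which you single out as the main obstacle, is the easy half, and the filtration d\'evissage you propose is both unnecessary and not clearly workable as stated: the reduction ``to $r=0$ along $B_\bullet\subseteq Z_\bullet$'' is not spelled out, and the naive iteration of the Cartier isomorphism does not terminate. The standard argument --- used verbatim in this paper for the adic analogue in the proposition preceding Theorem \ref{C-1Shortexact} --- is that \'etale-locally $\Omega^{r,\llog}_{X/k}$ is generated by forms $h\,\dlog g_1\wedge\cdots\wedge\dlog g_r$, and $\gamma^p\,\dlog g_1\wedge\cdots\wedge\dlog g_r$ with $\gamma^p-\gamma=\pm h$ (an Artin--Schreier, hence \'etale, extension) is a closed preimage under $C-1$, using the properties of Proposition \ref{CartierOnLog}. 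In summary: the global skeleton (reduce to an snc chart, restrict to the interior to recover Milne's sequence, identify the kernel, prove surjectivity) is right, but the log-pole boundedness in (i) is a real gap that cannot be closed by the ``\'etale-locally $\dlog$'' definition alone.
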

\subsubsection*{Cartier operator on adic spaces}
In this section we study the properties of the Cartier operator on adic spaces. Consider a discretely ringed adic space $\mathcal{X}$ over a perfect field $k$.
Recall that we have the inclusions 
\[
\Omega_{\mathcal{X}}^{+,n}\subseteq \Omega_{\mathcal{X},\lim}^{n,\llog}\subseteq \Omega^n_{\mathcal{X}}.
\]
To prove some of the needed properties of the Cartier operator we sometimes need to work with $\Omega^{+}_{\mathcal{X}}$ and sometimes with $\Omega^{n,\llog}_{\mathcal{X},\lim}$. The existence of a Cartier operator may first be checked on the site $(X,S)_\llog$.

\begin{proposition}
	The Cartier operator defined in \cref{CartierOnLog} induces a surjective morphism 
	\[
	C: Z\Omega^{n,\llog}\to \Omega^{n,\llog}
	\]
	of sheaves on $(X,S)_\llog$. Its kernel is $B\Omega^{n,\llog}$.
\end{proposition}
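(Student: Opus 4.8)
The plan is to define $C$ objectwise as the Cartier operator of Proposition~\ref{CartierOnLog} on each log smooth $\bar Y$ occurring in $(X,S)_\llog$, to check that these assemble into a morphism of sheaves, and then to verify surjectivity and the kernel statement locally in the topology of the site. Concretely, for each object $\bar Y$ I would take the Cartier operator $C_{\bar Y}\colon Z\Omega^{n,\llog}_{\bar Y/k}\to\Omega^{n,\llog}_{\bar Y/k}$ of Proposition~\ref{CartierOnLog}, a morphism of Zariski sheaves on $\bar Y$, and pass to global sections, noting that by construction $Z\Omega^{n,\llog}(\bar Y)=\Gamma\bigl(\bar Y,Z\Omega^{n,\llog}_{\bar Y/k}\bigr)$ and likewise for $\Omega^{n,\llog}$. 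To see that these commute with the restriction maps along a morphism $f\colon\bar Y'\to\bar Y$ of the site — i.e.\ that $C$ is a morphism of presheaves — one uses that functoriality of the Cartier operator for morphisms of log smooth log $k$-schemes can be read off from the characterizing properties of $\bar C^{-1}$ in Proposition~\ref{CartierOnLog}: all of the operations involved (the differential $d$, wedge products, raising functions to the $p$-th power, and $\dlog$ of log sections) are preserved by pullback of logarithmic forms along $f$, so $f^{\sharp}$ intertwines $\bar C^{-1}_{\bar Y}$ with $\bar C^{-1}_{\bar Y'}$, hence $C_{\bar Y}$ with $C_{\bar Y'}$. Since $Z\Omega^{n,\llog}$ and $\Omega^{n,\llog}$ are already sheaves on $(X,S)_\llog$, this morphism of presheaves is automatically a morphism of sheaves.

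Next, surjectivity is checked locally on the site. Let $\bar Y$ be an object and $s\in\Omega^{n,\llog}(\bar Y)$. By Proposition~\ref{CartierOnLog} the operator $C_{\bar Y}$ is surjective as a morphism of Zariski sheaves on $\bar Y$: under the identification $\Omega^{n,\llog}_{\bar Y^{(p)}/S}\cong F_*\Omega^{n,\llog}_{\bar Y/S}$ it is the composite of the quotient $Z\Omega^{n,\llog}_{\bar Y/k}\twoheadrightarrow Z\Omega^{n,\llog}_{\bar Y/k}/B\Omega^{n,\llog}_{\bar Y/k}$ with the inverse of the Cartier isomorphism $\bar C^{-1}$. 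Hence there is a Zariski open covering $\{U_i\}$ of $\bar Y$ and sections $\tilde s_i\in Z\Omega^{n,\llog}_{\bar Y/k}(U_i)$ with $C_{\bar Y}(\tilde s_i)=s|_{U_i}$. Each $U_i$, endowed with the log structure restricted from $\bar Y$ — which is precisely the compactifying log structure associated to $Y\cap U_i\subset U_i$ — is again an object of $(X,S)_\llog$, and $\{U_i\to\bar Y\}$ is a covering there, being a surjective family of strict open immersions, each inducing an open immersion $\Spa(Y\cap U_i,U_i)\to\Spa(Y,\bar Y)$. Thus $s$ lies locally in the image of $C$, so $C$ is surjective.

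Finally I would identify the kernel. Since $C(d\eta)=0$ for every logarithmic $\eta$ (fourth property in Proposition~\ref{CartierOnLog}), the morphism $C$ vanishes on the image subsheaf $B\Omega^{n,\llog}=\mathrm{im}\bigl(d\colon\Omega^{n-1,\llog}\to\Omega^{n,\llog}\bigr)\subseteq Z\Omega^{n,\llog}$, which yields an inclusion $B\Omega^{n,\llog}\hookrightarrow\ker C$ of subsheaves of $Z\Omega^{n,\llog}$. For the reverse inclusion, $\ker C(\bar Y)=\{\omega\in Z\Omega^{n,\llog}(\bar Y):C_{\bar Y}\omega=0\}$, which by the same fourth property applied to the Zariski sheaves on $\bar Y$ equals $\Gamma\bigl(\bar Y,B\Omega^{n,\llog}_{\bar Y/k}\bigr)$, i.e.\ the Zariski-locally exact logarithmic $n$-forms on $\bar Y$. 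Any such $\omega$ is, on a suitable Zariski covering $\{U_i\}$ of $\bar Y$, of the form $d\eta_i$ with $\eta_i\in\Omega^{n-1,\llog}_{\bar Y/k}(U_i)=\Omega^{n-1,\llog}(U_i)$, and since $\{U_i\to\bar Y\}$ is a covering of $(X,S)_\llog$ as in the previous paragraph, $\omega$ lies locally in the image of $d$ on the site, i.e.\ $\omega\in B\Omega^{n,\llog}(\bar Y)$. Hence $\ker C=B\Omega^{n,\llog}$.

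I expect the only genuine subtlety to be the passage between the Zariski-sheaf statements of Proposition~\ref{CartierOnLog} on each individual $\bar Y$ and the corresponding local statements in the topology of $(X,S)_\llog$ — which rests on the fact that a Zariski open covering of $\bar Y$, equipped with the restricted log structures, is a covering in $(X,S)_\llog$ — together with the functoriality of the Cartier operator that is needed to make $C$ a morphism of presheaves in the first place; beyond these points everything is a direct application of Proposition~\ref{CartierOnLog}.
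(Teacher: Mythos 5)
Your proposal is correct and follows exactly the route the paper takes: the paper's proof is the one-line observation that the statement is a direct consequence of Proposition \ref{CartierOnLog}, and your argument simply fills in the routine details (objectwise definition, compatibility with pullback, and the fact that Zariski coverings of each $\bar Y$ are coverings of the site, so surjectivity and the identification of the kernel with the image sheaf $B\Omega^{n,\llog}$ can be checked Zariski-locally). In particular your care about $\ker C$ being the \emph{sheaf-theoretic} image of $d$ is exactly the right reading of the statement.
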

\begin{proof}
	This is a direct consequence of \cref{CartierOnLog}. 
\end{proof}

\begin{corollary}\label{BOmegaUnramified}
	The sheaf $B\Omega^{n,\llog}$ is unramified.  
\end{corollary}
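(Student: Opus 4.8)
The plan is to realize $B\Omega^{n,\llog}$ as the kernel of a morphism between two sheaves that are already known to be unramified, and then to invoke the general stability result Lemma~\ref{kernelOfUnramifiedIsUnramified}.

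First I would record that the preceding proposition provides a short exact sequence of sheaves on $(X,S)_\llog$
\[
0\to B\Omega^{n,\llog}\to Z\Omega^{n,\llog}\xrightarrow{C}\Omega^{n,\llog}\to 0,
\]
since there the Cartier operator $C$ is shown to be a surjective morphism of sheaves on $(X,S)_\llog$ with kernel exactly $B\Omega^{n,\llog}$. In particular the truncated left-exact sequence $0\to B\Omega^{n,\llog}\to Z\Omega^{n,\llog}\xrightarrow{C}\Omega^{n,\llog}$ is exact, which is precisely the hypothesis demanded by Lemma~\ref{kernelOfUnramifiedIsUnramified}.

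Next I would appeal to the list of examples of unramified sheaves recorded earlier: both the sheaf of closed logarithmic differentials $Z\Omega^{n,\llog}=\ker(\Omega^{n,\llog}\xrightarrow{d}\Omega^{n+1,\llog})$ and the sheaf $\Omega^{n,\llog}$ itself are unramified. Applying Lemma~\ref{kernelOfUnramifiedIsUnramified} with $\mathcal{G}=Z\Omega^{n,\llog}$, $\mathcal{H}=\Omega^{n,\llog}$ and $f=C$ then immediately yields that $\mathcal{F}=B\Omega^{n,\llog}$ is unramified, which is the assertion.

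There is essentially no obstacle here: the only points to be sure of are that $C$ is genuinely a morphism of sheaves on $(X,S)_\llog$, so that its kernel is the sectionwise kernel used in Lemma~\ref{kernelOfUnramifiedIsUnramified}, and that $B\Omega^{n,\llog}$ as defined coincides with $\ker C$ and not merely with its sheafification --- both facts are supplied by the previous proposition. If one wished to avoid the Cartier operator entirely, one could instead try to check the two defining conditions of ``unramified'' directly for $B\Omega^{n,\llog}=d\big(\Omega^{n-1,\llog}\big)$ (injectivity of restriction along dense open immersions, and invariance under isomorphisms in codimension one), but this appears strictly harder than the two-line argument above, so I would not pursue it.
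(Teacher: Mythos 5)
Your argument is correct and is essentially the paper's own proof: the paper likewise realizes $B\Omega^{n,\llog}$ as the kernel of the surjective Cartier morphism $C:Z\Omega^{n,\llog}\to\Omega^{n,\llog}$ of unramified sheaves on $(X,S)_\llog$ and applies Lemma~\ref{kernelOfUnramifiedIsUnramified}. Your additional remarks (that $C$ is genuinely a morphism of sheaves on $(X,S)_\llog$ and that $B\Omega^{n,\llog}=\ker C$ sectionwise) are exactly the points supplied by the preceding proposition, so nothing further is needed.
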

\begin{proof}
	This follows from the fact that the $B\Omega^{n,\llog}$ is the kernel of the surjective morphism $C$ between unramified sheaves and from \cref{kernelOfUnramifiedIsUnramified}.
\end{proof}
Applying the functor $(\cdot)_{\lim}$ yields the following Proposition.
\begin{proposition}\label{ShortExactOnClog}
	There exists an exact sequence of sheaves on $\Spa(X,S)$
\[
0\to B\Omega_{\mathcal{X}/k,\lim}^{n,\llog}\to Z\Omega_{\mathcal{X}/k,\lim}^{n,\llog}\xrightarrow{C}\Omega_{\mathcal{X}/k,\lim}^{n,\llog}.
\]
\end{proposition}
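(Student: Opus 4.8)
The plan is to obtain this sequence simply by applying the functor $(\cdot)_{\lim}$ to the exact sequence of unramified sheaves on the log site $(X,S)_\llog$ that was just established, using the fact that $(\cdot)_{\lim}$ is left exact.

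First I would record the input on $(X,S)_\llog$. By the preceding proposition the Cartier operator induces a surjective morphism $C\colon Z\Omega^{n,\llog}\to\Omega^{n,\llog}$ of sheaves on $(X,S)_\llog$ whose kernel is $B\Omega^{n,\llog}$, so that
\[
0\to B\Omega^{n,\llog}\to Z\Omega^{n,\llog}\xrightarrow{C}\Omega^{n,\llog}\to 0
\]
is exact in $\Sh\big((X,S)_\llog\big)$. All three terms are unramified: $\Omega^{n,\llog}$ and $Z\Omega^{n,\llog}$ appear in the list of examples of unramified sheaves, and $B\Omega^{n,\llog}$ is unramified by Corollary \ref{BOmegaUnramified}. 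In particular, being realized as the kernel of a morphism of unramified sheaves, $B\Omega^{n,\llog}$ is genuinely a sheaf on the log site by Lemma \ref{kernelOfUnramifiedIsUnramified}; this is what makes it a legitimate argument of $(\cdot)_{\lim}$.

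Next I would invoke functoriality and left exactness of $(\cdot)_{\lim}$ (recorded in the remark following its definition). Functoriality sends $C$ to a morphism of sheaves on $\Spa(X,S)$, which is exactly the map also denoted $C$ in the statement, while left exactness means $(\cdot)_{\lim}$ preserves kernels, hence $\big(B\Omega^{n,\llog}\big)_{\lim}\cong\ker\big((Z\Omega^{n,\llog})_{\lim}\xrightarrow{C}(\Omega^{n,\llog})_{\lim}\big)$. Unwinding the notation $B\Omega_{\mathcal{X}/k,\lim}^{n,\llog}=\big(B\Omega^{n,\llog}\big)_{\lim}$ and likewise for the other two terms, this is precisely the asserted exactness of
\[
0\to B\Omega_{\mathcal{X}/k,\lim}^{n,\llog}\to Z\Omega_{\mathcal{X}/k,\lim}^{n,\llog}\xrightarrow{C}\Omega_{\mathcal{X}/k,\lim}^{n,\llog}.
\]

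I do not expect a genuine obstacle here: the real content has already been deposited in Corollary \ref{BOmegaUnramified} and in the general formalism of $(\cdot)_{\lim}$. The one point worth flagging is that $C$ need \emph{not} remain surjective after applying $(\cdot)_{\lim}$, since the functor is only left exact — this is why the displayed sequence carries no terminal zero. Surjectivity of $C$ (equivalently, of $C-1$) on the limit sheaves, i.e. the short exact sequence $0\to\nu_{\mathcal{X}}(n)\to Z\Omega^{+,n}_{\mathcal{X}/k}\xrightarrow{C-1}\Omega^{+,n}_{\mathcal{X}/k}\to 0$ appearing in step 1 of the proof sketch of Theorem \ref{MainTheorem}, is a strictly stronger and harder statement that must be treated separately.
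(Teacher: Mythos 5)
Your proposal is correct and is exactly the paper's argument: the proposition is obtained by applying the left exact functor $(\cdot)_{\lim}$ to the exact sequence $0\to B\Omega^{n,\llog}\to Z\Omega^{n,\llog}\xrightarrow{C}\Omega^{n,\llog}$ of unramified sheaves on $(X,S)_\llog$, with the unramifiedness of $B\Omega^{n,\llog}$ supplied by Corollary \ref{BOmegaUnramified}. Your remark that surjectivity is lost (hence no terminal zero, and the surjectivity questions are deferred to the later statements about $\Omega^{+}$) matches the paper's structure as well.
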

\begin{corollary}\label{ShortExactOnC}
	Under the assumption of \cref{Hypothesis_L} we have a short exact sequence of sheaves on $\mathcal{X}:=\Spa(X,S)$
	\[
	0\to B\Omega_{\mathcal{X}/k}^{+,n}\to Z\Omega_{\mathcal{X}/k}^{+,n}\xrightarrow{C}\Omega_{\mathcal{X}/k}^{+,n}\to 0.
	\]
\end{corollary}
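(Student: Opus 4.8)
The plan is to deduce the statement from Proposition \ref{ShortExactOnClog}, which already furnishes the left-exact sequence
\[
0\to B\Omega_{\mathcal{X}/k,\lim}^{n,\llog}\to Z\Omega_{\mathcal{X}/k,\lim}^{n,\llog}\xrightarrow{C}\Omega_{\mathcal{X}/k,\lim}^{n,\llog},
\]
so that only two things remain: to translate every term into its $\Omega^{+}$-avatar, and to prove that $C$ is surjective on the right-hand term.

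First I would record that the isomorphism $\Omega_{\mathcal{X},\lim}^{m,\llog}\cong\Omega^{+,m}_{\mathcal{X}}$ provided by Conjecture \ref{Hypothesis_L} is compatible with the de Rham differential for every $m$. Indeed both sheaves sit inside $\Omega^{m}_{\mathcal{X}}$, and on both the operator $d$ is simply the restriction of the usual differential on $\Omega^{\bullet}_{\mathcal{X}}$; here $d$ does preserve $\Omega^{+}$, since the Kähler seminorm of $df$ is bounded by that of $f$. Consequently the identification carries $Z\Omega_{\mathcal{X},\lim}^{n,\llog}=\ker\big(d\colon\Omega_{\mathcal{X},\lim}^{n,\llog}\to\Omega_{\mathcal{X},\lim}^{n+1,\llog}\big)$ onto $Z\Omega^{+,n}_{\mathcal{X}}=\ker\big(d\colon\Omega^{+,n}_{\mathcal{X}}\to\Omega^{+,n+1}_{\mathcal{X}}\big)$, intertwines the two Cartier operators, and identifies $B\Omega_{\mathcal{X},\lim}^{n,\llog}=\ker C$ with $B\Omega^{+,n}_{\mathcal{X}}$, the image of $d\colon\Omega^{+,n-1}_{\mathcal{X}}\to\Omega^{+,n}_{\mathcal{X}}$ (which equals $\ker C$ by the last two properties in Proposition \ref{CartierOnLog}). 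Feeding this into Proposition \ref{ShortExactOnClog} yields exactness of
\[
0\to B\Omega_{\mathcal{X}/k}^{+,n}\to Z\Omega_{\mathcal{X}/k}^{+,n}\xrightarrow{C}\Omega_{\mathcal{X}/k}^{+,n}.
\]

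It then remains to show that $C\colon Z\Omega^{+,n}_{\mathcal{X}}\to\Omega^{+,n}_{\mathcal{X}}$ is an epimorphism of sheaves on $\Spa(X,S)$, which may be checked on stalks. Since $\Omega^{+,n}_{\mathcal{X}}$ is the sheafification of the presheaf $\Spa(A,A^+)\mapsto\Omega^{n,\llog}_{(A,A^+)}$ and sheafification commutes with the finite limit defining $Z\Omega^{+,n}$, the map $C$ on a stalk is the filtered colimit of the Cartier operators $C\colon Z\Omega^{n,\llog}_{(A,A^+)}\to\Omega^{n,\llog}_{(A,A^+)}$ over the affinoid neighbourhoods $\Spa(A,A^+)$ of the given point, so it is enough that these be cofinally surjective. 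For an affinoid that is a log smooth presentation $\mathcal{U}=\Spa(U,\bar U)$ this follows from \cite[Theorem 8.12]{LogDiff}, which identifies the sections of $\Omega^{+,n}$ and $Z\Omega^{+,n}$ over $\mathcal{U}$ with $\Omega^{n,\llog}_{\bar U}(\bar U)$ and $Z\Omega^{n,\llog}_{\bar U}(\bar U)$, together with Proposition \ref{CartierOnLog}: the inverse Cartier operator being an isomorphism, $C$ is surjective as a morphism of Zariski sheaves on $\bar U$, hence surjective on sections after a further Zariski shrinking. Equivalently, this step is the $(\cdot)_{\lim}$-incarnation of the surjectivity of $C\colon Z\Omega^{n,\llog}\to\Omega^{n,\llog}$ on $(X,S)_\llog$ established just before Corollary \ref{BOmegaUnramified}.

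I expect the genuine obstacle to be precisely this last step: controlling the passage from the log site $(X,S)_\llog$ to the topological space $\Spa(X,S)$, that is, knowing that the affinoids on which $\Omega^{+}=\Omega^{\llog}_{\lim}$ is represented by a single log smooth log scheme are cofinal among open neighbourhoods of a point (which is why Conjecture \ref{Hypothesis_L}, and ultimately resolution of singularities, is invoked), and that surjectivity of the Cartier operator is compatible with the colimit computing the stalk. By contrast, the translation carried out in the first two paragraphs is purely formal once the $d$-compatibility of the isomorphism of Conjecture \ref{Hypothesis_L} has been noted.
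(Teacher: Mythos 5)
Your treatment of left-exactness is the same as the paper's: under Conjecture \ref{Hypothesis_L} the terms of Proposition \ref{ShortExactOnClog} are identified with their $\Omega^{+}$-avatars (the paper does not even spell out the $d$-compatibility you record, which is a harmless refinement). For the surjectivity of $C$, however, you take a genuinely different and heavier route than the paper. You reduce to a cofinal family of log smooth presentations $\Spa(U,\bar U)$, invoke \cite[Theorem 8.12]{LogDiff} to identify sections with $\Omega^{n,\llog}_{\bar U}(\bar U)$, and then use surjectivity of the Cartier operator on the log smooth scheme $\bar U$ after Zariski shrinking. This works, but only under resolution of singularities (Conjectures \ref{ConjResolution3}, \ref{ConjResolution2}/\ref{GeneralResolution}), which is what guarantees that log smooth presentations form a basis of neighbourhoods (Remark \ref{DescriptionOfOpens}); the corollary as stated assumes only Conjecture \ref{Hypothesis_L}, so strictly speaking your argument proves the statement under stronger hypotheses --- you yourself flag this as ``the genuine obstacle,'' but the paper shows it can be bypassed entirely. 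The paper's proof of surjectivity is an elementary stalkwise computation needing no geometry at all: at a point $x$ with local Huber pair $(A,A^+)$, a stalk element is written as $\eta=\sum_i f_i\, dg_{i,1}\wedge\dots\wedge dg_{i,n}$ with $|f_i g_{i,1}\cdots g_{i,n}|_x\leq 1$, and the explicit inverse-Cartier preimage
\[
\gamma=\sum_i f_i^{p}(g_{i,1}\cdots g_{i,n})^{p-1}\, dg_{i,1}\wedge\dots\wedge dg_{i,n}
\]
again lies in $Z\Omega^{+,n}$ because $|(f_i g_{i,1}\cdots g_{i,n})^{p}|_x=|f_i g_{i,1}\cdots g_{i,n}|_x^{p}\leq 1$. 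So what your approach buys is a reduction to the already-established geometric comparison on log smooth models, at the cost of importing resolution; what the paper's approach buys is a self-contained, hypothesis-minimal argument exploiting the multiplicativity of the K\"ahler seminorm, which is worth adopting here.
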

\begin{proof}
	Since we assumed that $\Omega^{+,n}_{\mathcal{X}/k}\cong \Omega^{n,\log}_{\mathcal{X}/k,\lim}$ by \cref{Hypothesis_L}, the exactness of the LHS follows from \cref{ShortExactOnClog}. 
	
	The exactness of the RHS is checked on the level of stalks. Let $x\in \mathcal{X}$ and $(A,A^+)$ be the local Huber pairs associated to $x$. Let $\eta\in \Omega_{\mathcal{X}/k,x}^{+,n}\cong\Omega^{+,n}_{(A,A^+)/k}$. We can express $\eta$ as a sum 
	\[
	\eta = \sum_{i}f_i dg_{i,1}\wedge \dots \wedge dg_{i,n},
	\] 
	such that $f_i,g_{i,1},\dots ,g_{i,n}$ lie in $A$ and $|f_i\cdot g_{i,1} \dots  g_{i,n}|_x\leq 1$. Then 
	\[
	\gamma:= \sum_{i}f_i^{p}(g_{i,1}\cdots g_{i,n})^{p-1}dg_{i,1}\wedge \dots \wedge dg_{i,n},
	\]
	is a preimage of $\eta$ under $Z\Omega_{A/k}^{n}\xrightarrow{C}\Omega_{A/k}^{n}$. Since
	\[
	|(f_i\cdot g_{i,1} \dots  g_{i,n})^p|_x=|f_i\cdot g_{i,1} \dots  g_{i,n}|^p_x\leq 1,
	\]
	we deduce that $\gamma$ lies in $Z\Omega^{+,n}_{(A,A^+)/k}\cong Z\Omega^{+,n}_{\mathcal{X}/k,x}$.
	
	In conclusion, the Cartier operator $Z\Omega_{\mathcal{X}/k}^{+,n}\xrightarrow{C}\Omega_{\mathcal{X}/k}^{+,n}$ is surjective.
\end{proof}
\begin{definition}\label{Def:LogDeRhamWitt}
	The log de Rham-Witt sheaf $\nu_{\mathcal{X}}(n)$ is defined as the \'etale additive subsheaf of $\Omega_{\mathcal{X}}^{n}$ whose sections are \'etale locally generated by wedge products over $\mathbb{F}_p$ of the form
	\[
	df/f, \quad f\in \mathcal{O}_{\mathcal{X}}^\times.
	\]
	Alternatively, $\nu_{\mathcal{X}}(n)$ is the kernel of $C-1:Z\Omega_{\mathcal{X}/k}^n\to \Omega_{\mathcal{X}/k}^n$. 
	
	The latter definition is stated just after Remark 1.2. in \cite{MilneDuality}. The fact that it is equivalent to the first definition is stated in \cite[Lemma 1.6]{MilneValuesOfZeta}.
\end{definition}
In \cite{MilneValuesOfZeta}, one also defines the sheaves of $\mathbb{Z}/p^{m}\mathbb{Z}$-modules $\nu_{m}(n)$ on a given scheme $X$ over a perfect field $k$ of characteristic $p$. These are the subsheaves of the de Rham Witt sheaves $W_m\Omega^n_{X/k}$ generated by $\dlog \underline{f_1 }\wedge \dots \wedge \dlog \underline{f_n}$ (see the paragraph after Lemma 1.6 in \cite{MilneValuesOfZeta}).
\begin{definition}
	Over a discretely ringed adic space $\mathcal{X}$ over $k$ we define the sheaves
	\[
	\nu_{m,\mX}(n):\Op(\Spa(X,S))\to \mathbb{Z}/p^m\mathbb{Z}-\textrm{modules}: \Spa(U,T)\mapsto \nu_{m,\mX}(n)(U).
	\]
	These are \'etale sheaves and hence by restriction sheaves in the tame topology. We drop the subscript $\mX$ when the context is clear. We may also drop the index $m$ if $m=1$.
\end{definition}
\begin{remark}
	The sheaves $\nu_{r}(n)$ are Zariski-locally generated sub-$\mathbb{Z}/p^r\mathbb{Z}$-sheaves of the de Rham-Witt sheaves $W_r\Omega_\mX^n$ which are generated by $\dlog \underline{f_1} \wedge \dots \wedge \dlog \underline{f_n}$. This is \cite[Theorem 1.2]{morrowLogHodgeWitt}  .
\end{remark}
The next lemma is needed in order to generalize the purity theorem to $\mathbb{Z}/p^m\mathbb{Z}$-modules for any $m$.
\begin{lemma}\label{highermSES}
	We have a short exact sequence of sheaves on the topological space of $\mathcal{X}$
	\[
	0\to \nu_{m}(n)\to \nu_{m+m'}(n)\to \nu_{m'}(n)\to 0.
	\]
\end{lemma}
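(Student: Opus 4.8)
The plan is to obtain the sequence from the corresponding short exact sequence of logarithmic de Rham–Witt sheaves on smooth schemes over $k$, and to transport it along the comparison between such schemes and the adic space $\mathcal{X}$ recalled in this chapter.

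First I would reduce to a statement about stalks. Exactness of a sequence of abelian sheaves on the underlying topological space of $\mathcal{X}$ may be tested at each point $x\in\mathcal{X}$. By the very definition of $\nu_m(n)$ on $\mathcal{X}$, a fundamental system of affinoid neighbourhoods of $x$ consists of spaces $\Spa(A,A^+)$ with $\Spec A$ an open subscheme of $X$, and on such a neighbourhood $\nu_m(n)$ takes the value $\nu_m(n)(\Spec A)$, where on the right $\nu_m(n)$ is, as recalled before the statement, the subsheaf of $W_m\Omega^n_{\Spec A/k}$ generated by the forms $\dlog f_1\wedge\dots\wedge\dlog f_n$. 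Passing to the colimit over these neighbourhoods — exactly as in the treatment of $\nu(n)$ and $\Omega^{+}$ following \cite{LogDiff} — identifies the stalk $\nu_m(n)_x$ with the stalk of the logarithmic de Rham–Witt sheaf $\nu_m(n)$ on $X$ at $\supp(x)$, more precisely with its sections over the henselian local ring determined by $x$. The whole statement is thereby reduced to the analogous assertion for the sheaves $\nu_\bullet(n)$ on smooth $k$-schemes.

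Second I would invoke the classical exact sequence: for a smooth scheme $Y$ over the perfect field $k$ there is a canonical short exact sequence
\[
0\longrightarrow \nu_m(n)\xrightarrow{\ \underline{p}^{\,m'}\ }\nu_{m+m'}(n)\xrightarrow{\ R^{\,m}\ }\nu_{m'}(n)\longrightarrow 0 ,
\]
where $R$ is the restriction $W_{s+1}\Omega^n\to W_s\Omega^n$ and $\underline{p}^{\,m'}$ is the Verschiebung-type map attached to multiplication by $p^{m'}$; see e.g. \cite{MilneValuesOfZeta} and \cite{sato2007logarithmic}. Here $R^{m}$ is surjective because it sends a generator $\dlog[f_1]\wedge\dots\wedge\dlog[f_n]$ to the corresponding generator one level down, and the remaining exactness is checked on strictly henselian local rings, where $W_\bullet\Omega^n$ and its logarithmic part are explicitly controlled.

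Finally I would transport the sequence back. The operators $R$ and $\underline{p}^{\,m'}$ are functorial in $Y$, hence compatible with the transition maps of the colimits computing the stalks $\nu_\bullet(n)_x$; they therefore glue to morphisms of sheaves $\nu_m(n)\to\nu_{m+m'}(n)\to\nu_{m'}(n)$ on $\mathcal{X}$, and exactness at every stalk — which follows from the classical sequence together with the identification of the first step — gives exactness of the sequence of sheaves. I expect the only real obstacle to be this stalk identification, i.e. making precise that the sectionwise prescription $\Spa(A,A^+)\mapsto\nu_m(n)(\Spec A)$ reproduces, in the colimit over the neighbourhoods of $x$, exactly the stalk of the scheme-theoretic $\nu_m(n)$ over which the classical sequence is exact; this is where the comparison results recalled in this chapter (following \cite{LogDiff}) are essential, while everything else is bookkeeping around well-documented facts about de Rham–Witt sheaves.
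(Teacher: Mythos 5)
Your route is essentially the paper's: the paper proves this lemma in one line by citing Milne's scheme-level short exact sequence (Lemma 1.7 of the cited paper of Milne) and noting that, since $\nu_m(n)$ on $\mathcal{X}$ is defined by $\Spa(U,T)\mapsto \nu_m(n)(U)$, the adic statement is a direct consequence. Your extra stalkwise bookkeeping is in the same spirit and merely more detailed; the only imprecision is that stalks on the topological space of $\mathcal{X}$ are filtered colimits over opens $\Spa(A,A^+)$, hence sections over localizations of $\mathcal{O}_{X,\supp(x)}$ rather than over henselian local rings, a point which does not change the structure of the argument you or the paper use.
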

\begin{proof}
	This is a direct consequence of the same result on schemes proven in \cite[Lemma 1.7]{MilneValuesOfZeta}.
\end{proof}

\begin{proposition}
	 Assume \cref{Hypothesis_L}. Then the morphism 
	\[
	Z\Omega^{+,n}_{\mathcal{X}}\xrightarrow{C-1} \Omega^{+,n}_{\mathcal{X}}
	\]
	is surjective in the tame and strongly \'etale topology over $\mathcal{X}$.
\end{proposition}
\begin{proof}
	Let us check the surjectivity of $C-1$. This is checked on stalks. Assume we have a discretely ringed local Huber pair $(A,A^+)$. Denote by $|\cdot|:A\to \Gamma$ the valuation associated to $(A,A^+)$. Consider an element
	 $$\eta= h \dlog f_1\wedge \dots \wedge  \dlog f_n \in \Omega_{(A,A^+)}^{+,n},$$
	 where $h$ lies in $A^+$.
	 
	 A preimage of this in $Z\Omega^n_A$ under $C-1$ is of the form 
	$$\eta'=\gamma^p \dlog f_1 \wedge \dots \wedge \dlog f_n,$$
	with $\gamma \in A^+$ satisfying $\gamma^p -\gamma = h$. Such a $\gamma$ exists since 
	$$\Spec A^+[T]/(T^p-T-h)$$
	is \'etale and hence induces a finite strongly \'etale morphism of adic spaces
	\[
	\Spa(A[T]/(T^p-T-h), A^+[T]/(T^p-T-h))\to \Spa(A,A^+).
	\] 
	It is left to check that $\eta'$ lies in $\Omega^+_{(A,A^+)}$. To prove this, we need to check that $|\gamma^p|\leq 1$. Assume that $|\gamma|>1$. Then $|\gamma^p|=|\gamma|^p>|\gamma|$. Hence $1<|\gamma|^p=|\gamma^p-\gamma|=|h|\leq 1$, which is a contradiction.
	
	It follows that $|\gamma^p|\leq 1$. Therefore $\eta'$ lies in $Z\Omega_{(A,A^+)}^{+,n}$
\end{proof}
\begin{theorem}\label{C-1Shortexact}
	The sheaf $\nu(n)$ is the kernel of 
	\[
	Z\Omega_{\mathcal{X},\lim}^{n,\llog}\xrightarrow{C-1}\Omega_{\mathcal{X},\lim}^{n,\llog}.
	\]
	In particular, if \cref{Hypothesis_L} holds, then we have a short exact sequence
	\[
		0\to\nu(n)\to Z\Omega^{+,n}_{\mathcal{X}}\xrightarrow{C-1} \Omega^{+,n}_{\mathcal{X}}\to 0,
	\]
	which further implies that $\nu(n)$ is a sheaf in the tame topology.
\end{theorem}
\begin{proof}
Note that $\nu(n)$ is the kernel of $Z\Omega_{\mathcal{X}}^n\xrightarrow{C-1}\Omega_{\mathcal{X}}^n$. Furthermore $\nu(n)$ is the subsheaf of $Z\Omega_{\mathcal{X}}^n$ generated by sums of wedge products of logarithmic differentials, i.e. elements of the form $\dlog f=df/f$, for $f$ a section in $\mathcal{O}_X$. These elements have valuation $\leq 1$ for any point $x\in \mathcal{X}$ since $|df|_x\leq |f|_x$ by definition.
	
	But we already know that $Z\Omega^{+,n}_{\mathcal{X}}\subseteq Z\Omega^{n,\llog}_{\mathcal{X},\lim}$ with equality under the assumption of \cref{Hypothesis_L}.
\end{proof}

\begin{remark}
	\begin{itemize}
		\item An alternative less conceptual proof of \cref{C-1Shortexact} uses \cref{C-1ShortExactLog} together with assumption of resolution of singularities.
		\item We will present in a future article an alternative proof of the existence of the Cartier operator $C:Z\Omega_\mX^{+,n}\to \Omega_\mX^{+,n}$ without the need of resolution of singularities or \cref{Hypothesis_L}. The proof relies on a similar result of Gabber presented in the appendix of \cite{KSTVoerst}.
	\end{itemize}

\end{remark}

\subsection{Cohomology of \texorpdfstring{$\Omega^{+,n}$}{Omega⁺}}
In this subsection we will check that the tame and the strongly \'etale cohomology of $\Omega^{+,n}_{\mathcal{X}/k}$ agree with the Riemann-Zariski open cohomology. Before doing so, we recall the definitions of the tame and strongly \'etale topology on adic spaces. 
 \subsubsection*{Tame and strongly \'etale topology}
 We assign to an adic space $\mathcal{X}$ the tame site $\mathcal{X}_t$ and the strongly \'etale site $\mathcal{X}_\textrm{s\'et}$ as defined in \cite{HTame}. We recall their definitions.
 \begin{definition}[{\cite[Definition 3.3.]{HTame}}]
	Let $L/K$ be a finite separable extension of valued fields, whose valuation rings are $\mathcal{O}_L$ and $\mathcal{O}_K$. Let $p$ be the residue characteristic of $L$ and $K$. We say that $L/K$ is \textit{unramified} if the extension $\mathcal{O}_L$ is the localization of an \'etale $\mathcal{O}_K$-algebra. It is said to be \textit{ramified} if it is not unramified.
	
	The extension $L/K$ is called \textit{tamely ramified} if $L^{\textrm{sh}}/K^{\textrm{sh}}$ is a prime-to-$p$ extension, where $L^{\textrm{sh}}$ and $K^{\textrm{sh}}$ are the strict henselizations of $L$ and $K$ respectively.  
	
	An \'etale morphism $f:\mathcal{Y}\to \mathcal{X}$ of adic space is \textit{tame} (resp. \textit{strongly}) \'etale if for every $x\in \mathcal{Y}$, the extension of valued fields $k(x)/k(f(x))$ is tamely ramified (resp. unramified).
	
	The tame and strongly \'etale coverings are \'etale coverings $\{\mathcal{X}_i\to \mathcal{X}\}$ whose morphisms are tame or strongly \'etale, respectively. These coverings induce the tame and strongly \'etale sites, denoted by $\mathcal{X}_{\tame}$ and $\mathcal{X}_{\sett}$, respectively.
 \end{definition}
 
We then have the chain of morphisms of sites
 \[
 \mathcal{X}_\et\xrightarrow{\phi} \mathcal{X}_{\tame}\xrightarrow{\psi} \mathcal{X}_{\sett}.
 \]
Assume that $\mathcal{X}$ is defined over $\mathbb{F}_p$. Given a $p$-torsion sheaf $\mathcal{F}$ on the tame site, we have
\[
H^i(\mathcal{X}_\tame, \mathcal{F}) \cong H^i(\mathcal{X}_\sett, \psi_*\mathcal{F}), \quad \forall i \geq 0,
\]
by \cite[Proposition 8.5]{HTame}.  Likewise, for a torsion sheaf $\mathcal{G}$ on the \'etale site with torsion prime to $p$, we have
 \[
 H^i(\mathcal{X}_\et, \mathcal{G}) \cong H^i(\mathcal{X}_\tame, \phi_* \mathcal{G}), \quad \forall i \geq 0,
 \]
 by \cite[Proposition 8.2]{HTame}.

Given a closed adic subspace $i:\mathcal{Z}\to \mathcal{X}$ we denote by $j:\mathcal{U}=\mathcal{X}\backslash \mathcal{Z}\to \mathcal{X}$ the natural open immersion. For $\tau\in \{\sett, \tame\}$, define 
\[
\begin{aligned}
	&i^!:&\{\textrm{abelian sheaves over }\mathcal{X}_\tau\}\to &\{\textrm{abelian sheaves on }\mathcal{X}_\tau \textrm{ with support on } \mathcal{Z}\}\\
	&&\mathcal{F}\mapsto &\ker (\mathcal{F}\to j_*j^*\mathcal{F}).
\end{aligned}
\]
Furthermore one defines the cohomology with support 
\[
H^n_{\mathcal{Z},\tau}(\mathcal{X},-)=R^n(\Gamma(\mathcal{X},i^!(-)).
\]
This fits into the long exact sequence of excision:
\begin{equation}\label{eq:Excision}
	\cdots \to H^{n-1}_\tau(\mathcal{U}, \mathcal{F}) \to H^n_{\mathcal{Z},\tau}(\mathcal{X}, \mathcal{F}) \to H^n_\tau(\mathcal{X}, \mathcal{F}) \to H^n_\tau(\mathcal{U}, \mathcal{F}) \to \cdots
\end{equation}

\begin{remark}\label{rmk:tamevsstronglyet}
	Using Propositions 8.2 and 8.5 of \cite{HTame} and the five lemma, one obtains analogous isomorphisms for cohomology with support
	 \[
	H^i_{\mathcal{Z}}(\mathcal{X}_\tame,\mathcal{F})\cong H^i_{\mathcal{Z}}(\mathcal{X}_\sett,\psi_*\mathcal{F}), \quad \forall i\geq 0
	\]  
	when $\mathcal{F}$ is a $p$-torsion sheaf on $\mathcal{X}_\tame$. Similarly, for torsion sheaves $\mathcal{G}$ on $\mathcal{X}_\et$ with torsion prime to $p$, one has
	\[
	H^i_{\mathcal{Z}}(\mathcal{X}_\et,\mathcal{G})\cong H^i_{\mathcal{Z}}(\mathcal{X}_\tame, \phi_* \mathcal{G}), \quad \forall i\geq 0.
	\]
\end{remark}
\subsubsection*{Comparision with the Riemann-Zariski cohomology	} 
For simplicity we write $\Omega^{+,n}$ instead of $\Omega^{+,n}_{\mathcal{X}/k}$. We similarly drop the subscript on $\Omega^{n,\log}$.
\begin{proposition}\label{StronglyEtaleEqualOpen}
	Let $\mathcal{X}$ be a discretely ringed adic space. Then 
	\[
	H^i_{\textrm{s\'et}}(\mathcal{X},\Omega^{+,n})\cong H^i(\mathcal{X},\Omega^{+,n}).
	\]
\end{proposition}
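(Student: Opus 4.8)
The statement to prove is that for a discretely ringed adic space $\mathcal{X}$ one has
\[
H^i_{\sett}(\mathcal{X},\Omega^{+,n})\cong H^i(\mathcal{X},\Omega^{+,n}),
\]
i.e. the strongly étale cohomology of $\Omega^{+,n}$ agrees with the cohomology computed on the underlying topological space (equivalently, on the open/Riemann–Zariski site). The natural approach is to compare via the morphism of sites $\lambda:\mathcal{X}_{\sett}\to \mathcal{X}$ (the projection from the strongly étale site to the "topological" site of open immersions) and show that $R\lambda_*\Omega^{+,n}$ is concentrated in degree zero, with $\lambda_*\Omega^{+,n}=\Omega^{+,n}$ on the topological site. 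Since $\Omega^{+}$ is already known to be a sheaf in the strongly étale topology (Proposition 7.9 of \cite{LogDiff}, quoted in the excerpt), the identification $\lambda_*\Omega^{+,n}=\Omega^{+,n}$ is automatic; the content is the vanishing of the higher direct images.

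**Key steps.** First I would reduce the vanishing of $R^q\lambda_*\Omega^{+,n}$ for $q>0$ to a statement about stalks, hence to the following local claim: for a local discretely ringed Huber pair $(A,A^+)$ over $k$, the strongly étale cohomology $H^q_{\sett}(\Spa(A,A^+),\Omega^{+,n})$ vanishes for $q>0$. The crucial input is that a strongly étale covering is, by definition, built out of unramified extensions of valued fields, which (again by definition in the excerpt) are localizations of ordinary étale algebras; concretely, a cofinal system of strongly étale covers of $\Spa(A,A^+)$ is obtained by taking $\Spa(B,B^+)\to\Spa(A,A^+)$ with $B^+$ an étale $A^+$-algebra and $B=B^+\otimes_{A^+}A$. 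Second, I would use this to rewrite strongly étale cohomology of $\Spa(A,A^+)$ in terms of étale cohomology of $\Spec(A^+)$ with coefficients in the $\mathcal{O}$-module $\Omega^{+,n}$: since $\Omega^{+}$ is the "integral" subsheaf cut out by the Kähler seminorm and its formation commutes with étale base change along $A^+$-algebras (because the seminorm and the valuations are pulled back), $\Omega^{+,n}$ restricted to this cofinal system is a quasi-coherent-type module, and étale cohomology of a quasi-coherent sheaf on an affine scheme vanishes in positive degrees. Third, patching these local computations via the Leray spectral sequence for $\lambda$ gives $H^i_{\sett}(\mathcal{X},\Omega^{+,n})\cong H^i(\mathcal{X},\lambda_*\Omega^{+,n})=H^i(\mathcal{X},\Omega^{+,n})$, which is the assertion.

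**The main obstacle.** The delicate point is the passage from "strongly étale cohomology of the adic space $\Spa(A,A^+)$" to "étale cohomology of the affine scheme $\Spec(A^+)$ with quasi-coherent coefficients." One must verify that the strongly étale site of $\Spa(A,A^+)$ is equivalent (or at least cohomologically equivalent for the sheaf $\Omega^{+,n}$) to the small étale site of $\Spec(A^+)$ — this is where the definition of "unramified = localization of an étale algebra" does the work, but it requires care because $\Spa(A,A^+)$ sees the valuations, not just the scheme $\Spec(A^+)$, and one must check that the extra data does not produce new covers relevant to $\Omega^{+,n}$. Concretely I expect one wants: (i) every strongly étale cover is refined by one of the form $\{\Spa(B_i,B_i^+)\}$ with $B_i^+/A^+$ étale; (ii) for such $B^+$, $\Omega^{+,n}_{(B,B^+)}\cong \Omega^{+,n}_{(A,A^+)}\otimes_{A^+}B^+$; and (iii) Čech–to–derived-functor plus vanishing of higher cohomology of quasi-coherent sheaves on the affine $\Spec(A^+)$. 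Step (ii) is the technical heart — it amounts to showing the Kähler seminorm is compatible with unramified base change — and is presumably where one invokes the lemma $\Omega^{\llog,n}_{(A,A^+)}\cong\Omega^{n,+}_{(A,A^+)}$ (Lemma 7.3 of \cite{LogDiff}) together with the étale-base-change properties of logarithmic differentials.
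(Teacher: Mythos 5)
Your proposal follows essentially the same route as the paper: reduce to vanishing of higher strongly étale cohomology on a local adic space $\Spa(A,A^+)$, refine strongly étale covers to the Cartesian ones coming from étale covers of $\Spec(A^+)$ (the paper cites \cite[Proposition 12.3]{HTame} for this reduction), identify $\Omega^{+,n}$ with the logarithmic differentials $\Omega^{n,\llog}$ on such covers via the local comparison of \cite{LogDiff}, and conclude by viewing $\Omega^{n,\llog}$ as a quasi-coherent module over $\Spec(A^+)$, whose Čech complex is exact by faithfully flat descent. Your "technical heart" (compatibility of $\Omega^{+}$ with unramified base change) is handled in the paper exactly as you anticipate, by the quasi-coherence of $\Omega^{n,\llog}$ over $\Spec(A^+)$, so the argument is correct and matches the paper's proof.
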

In order to prove this claim, we first need to study the sheaf $\Omega^{+,n}$ over the strongly étale site associated to a discrete local Huber pair $(A,A^+)$. 

\begin{lemma}
Let $(A,A^+)$ be a discretely ringed local Huber pair. Then $\Omega^{n,+}(Y)=\Omega^{n,\log}(Y)$ for any Cartesian strongly \'etale map $Y\to \Spa(A,A^+)$.
\end{lemma}
\begin{proof}
By \cite[Corollary 12.2]{HTame}, every strongly \'etale covering of $\Spa(A,A^+)$  has a refinement by \textit{Cartesian} morphisms: these are strongly \'etale coverings 
\[
\Spa(B,B^+)\to \Spa(A,A^+)
\]
for which 
\begin{equation}
	\begin{tikzcd}
		\Spec(B) \ar[r]\ar[d]&\Spec(B^+)\ar[d]\\
		\Spec(A)\ar[r]& \Spec(A^+)
	\end{tikzcd}
\end{equation}
is Cartesian. By \cite[Proposition 11.10]{HTame} the morphism $A^+\to B^+$ is \'etale. We conclude that $\Spa(A,A^+)_{\textrm{Cart,s\'et}}$ and $\Spec(A^+)_\et$ are equivalent, where $\Spa(A,A^+)_{\textrm{Cart,s\'et}}$ is the site of Cartesian strongly étale maps over $\Spa(A,A^+)$. 

We have inclusions of presheaves on $\Spa(A,A^+)_{\textrm{s\'et}}$
\(\Omega^{n,\log}\subset \Omega^{+,n}\subset \Omega^n.\)
But $\Omega^{+,n}$ is the sheafification of $\Omega^{n,\log}$ in $\Spa(A,A^+)_\sett$ and therefore in $\Spa(A,A^+)_{\textrm{Cart,\sett}}$. The presheaf
\[
\Spec(A^+)_\et\ni Y \mapsto \Omega^{n,\log}(\Spa(Y\otimes_{A^+}A,Y))
\]
is a quasi-coherent sheaf in the étale topology (see \cite[Section 03DR]{Stacks}). This follows from the following two fact
\begin{itemize}
	\item By \cite[Chapter IV, Corollary 1.2.8]{Ogus}, $\Omega^{n}_Y(\log Y\otimes_{A^+}A)$ is a sheaf in the Zariski topology of $Y$.
	\item The log structure on $Y$ is induced by the log structure on $A$. Thus the morphism of log schemes $(Y\otimes_{A^+}A,Y)\to (\Spec A,\Spec A^+)$ is strict, which implies that $\Omega^{n,\log}(\Spa(Y\otimes_{A^+}A,Y))$ the pullback of $\Omega_{(A,A^+)}^{n,\log}$.
\end{itemize}
 Hence  $\Omega^{n,\log}$ is already a sheaf on $\Spa(A,A^+)_{\textrm{Cart,s\'et}}$. The claim follows.
\end{proof}

\begin{proof}[Proof of \cref{StronglyEtaleEqualOpen}]
	To prove the claim we need to check that 
	\[
	H^i_{\textrm{s\'et}}(\mathcal{X},\Omega^{+,n})=0
	\]
	when $\mathcal{X}=\Spa(A,A^+)$ is a discretely ringed local adic space. We follow the steps of \cite[Proposition 12.3]{HTame}. As in loc. cit. we only need to prove that the \v{C}ech complex for $\Omega^{+,n}$ associated to \textit{Cartesian} strongly \'etale coverings 
	\[
	\Spa(B,B^+)\to \Spa(A,A^+)
	\]
	is trivial. Over such coverings $\Omega^{+,n}=\Omega^{n,\log}$.
	
	 Taking integral closures commutes with \'etale base change. Therefore $B^+\otimes_{A^+}\dots\otimes_{A^+}B^+$ is integrally closed in $B\otimes_A\dots\otimes_AB$.
 \[\resizebox{\columnwidth}{!}{$
 \Spa(B,B^+)\times_{\Spa(A,A^+)} \cdots \times_{\Spa(A,A^+)}\Spa(B,B^+)\cong \Spa(B\otimes_A\dots\otimes_AB, B^+\otimes_{A^+}\dots\otimes_{A^+}B^+).$}
 \]  
 	Furthermore, we may interpret $\Omega^{n,\llog}$ as a quasi-coherent sheaf over $\Spec(A^+)$   \cite[Chapter IV, Corollary 1.2.8]{Ogus}). Therefore the \v{C}ech complex in question is equal to the \v{C}ech complex of the quasi-coherent sheaf $\Omega^{n,\log}$, which is exact since $A^+\to B^+$ is faithfully flat.
\end{proof}
We furthermore compare the strongly \'etale and tame cohomology of $\Omega^{+,n}$.
\begin{proposition}\label{TameEqualStronglyEtale}
	Let $\mathcal{X}$ be a discretely ringed adic space over a valued field $\Spa(k,k^+)$. Then 
	\[
	H^i_{\textrm{s\'et}}(\mathcal{X},\Omega^{+,n})=H^i_t(\mathcal{X},\Omega^{+,n}).
	\]
\end{proposition}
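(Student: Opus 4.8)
The plan is to deduce the identity from the comparison between tame and strongly \'etale cohomology for $p$-torsion sheaves proved in \cite{HTame}. The key preliminary observation is that $\Omega^{+,n}$ is a $p$-torsion abelian sheaf: it is a subsheaf of $\Omega^n_{\mathcal{X}/k}$, which is a sheaf of $\mathcal{O}_{\mathcal{X}}$-modules, and since $k$ has characteristic $p$ the structure sheaf $\mathcal{O}_{\mathcal{X}}$ is an $\mathbb{F}_p$-algebra, so multiplication by $p$ annihilates $\Omega^n_{\mathcal{X}/k}$, hence also its subsheaf $\Omega^{+,n}$. Moreover, by \cite[Proposition 7.9]{LogDiff}, $\Omega^{+,n}$ is a sheaf both on $\mathcal{X}_{\tame}$ and on $\mathcal{X}_{\sett}$.

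With this in hand, consider the morphism of sites $\psi:\mathcal{X}_{\tame}\to\mathcal{X}_{\sett}$. First I would note that $\psi_*\Omega^{+,n}=\Omega^{+,n}$ as sheaves on $\mathcal{X}_{\sett}$: by definition $\psi_*\Omega^{+,n}$ is the presheaf $\mathcal{U}\mapsto\Omega^{+,n}(\mathcal{U})$ on strongly \'etale opens $\mathcal{U}\to\mathcal{X}$, and this presheaf is already a sheaf on $\mathcal{X}_{\sett}$ by the previous paragraph. Since $k$ has characteristic $p$, $\mathcal{X}$ is defined over $\mathbb{F}_p$, so \cite[Proposition 8.5]{HTame} applies to the $p$-torsion tame sheaf $\Omega^{+,n}$ and yields
\[
H^i_t(\mathcal{X},\Omega^{+,n})=H^i(\mathcal{X}_{\tame},\Omega^{+,n})\cong H^i(\mathcal{X}_{\sett},\psi_*\Omega^{+,n})=H^i(\mathcal{X}_{\sett},\Omega^{+,n})=H^i_{\sett}(\mathcal{X},\Omega^{+,n}),
\]
which is exactly the asserted isomorphism.

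The point that genuinely needs the input of \cite{HTame}, and the reason the \v{C}ech computation used in Proposition \ref{StronglyEtaleEqualOpen} cannot simply be rerun, is the passage through tame coverings: for a tame covering $\Spa(B,B^+)\to\Spa(A,A^+)$ that is not strongly \'etale, $B^+$ need not be integrally closed in $B$, so the tensor powers $B^+\otimes_{A^+}\cdots\otimes_{A^+}B^+$ are not integrally closed in $B\otimes_A\cdots\otimes_A B$ and the relevant \v{C}ech complex for $\Omega^{+,n}$ is no longer that of a quasi-coherent sheaf on $\Spec(B^+)$. This is precisely the difficulty handled in \cite[§8]{HTame}. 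If one wishes for a self-contained argument instead of citing Proposition 8.5 directly, one can run the Leray spectral sequence for $\psi$: since $R^q\psi_*\Omega^{+,n}$ is the sheafification of $\mathcal{U}\mapsto H^q(\mathcal{U}_{\tame},\Omega^{+,n})$, it suffices to prove $H^q(\mathcal{U}_{\tame},\Omega^{+,n})=0$ for $q>0$ with $\mathcal{U}$ local, and establishing this vanishing — which forces one to reproduce the normalization bookkeeping of \cite[§8]{HTame} on top of Proposition \ref{StronglyEtaleEqualOpen} — is the main obstacle.
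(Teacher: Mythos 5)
Your proof is correct and follows essentially the same route as the paper: both reduce the statement to \cite[Proposition 8.5]{HTame} by observing that $\Omega^{+,n}$ is a $p$-torsion sheaf (the paper implicitly assuming, as you make explicit, that $k$ has characteristic $p$ and that $\psi_*\Omega^{+,n}=\Omega^{+,n}$). The extra remarks on why the \v{C}ech argument of Proposition \ref{StronglyEtaleEqualOpen} does not transfer to tame coverings are accurate but not needed for the proof itself.
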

\begin{proof}
	Under the assumption that $\mathcal{X}$ is defined over a field of characteristic $p$, the claim follows from \cite[Proposition 8.5]{HTame}, as $\Omega^{+,n}_\mathcal{X}$ is $p$-torsion. 
\end{proof}

Lastly, we aim to prove that  $H^{i}(\Spa(A,A^+),\Omega^{+,n})$ is trivial for log smooth Huber pairs $(A,A^+)$ with discrete topology. 
\begin{theorem}\label{CohomologyOfOmega+IsZero}
	Let $\mathcal{X}=\Spa(A,A^+)$ be a discretely ringed affinoid over a perfect field $k$ of positive characteristic $p$. Assume that $(A,A^+)$ is associated to a log smooth log scheme. Assume Conjectures \ref{ConjResolution3} and \ref{GeneralResolution}. Then
	\[
	H^i_t(\Spa(A,A^+),\Omega^{+,n})=H^i_{\textrm{s\'et}}(\Spa(A,A^+),\Omega^{+,n})=H^i(\Spa(A,A^+),\Omega^{+,n})=0,
	\] 
	for all $i\geq 1$.
\end{theorem}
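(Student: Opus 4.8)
The plan is to identify all three cohomology theories in the statement and then compute the resulting Riemann--Zariski open cohomology as a filtered colimit over modifications, where the only real geometric input will be the acyclicity Theorem \ref{AcyclicityProposition}.

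By Proposition \ref{StronglyEtaleEqualOpen} and Proposition \ref{TameEqualStronglyEtale} it is enough to prove $H^i(\Spa(A,A^+),\Omega^{+,n})=0$ for $i\geq 1$ in the open topology. Put $X=\Spec(A)$ and $\bar X=\Spec(A^+)$, so that $\mathcal X=\Spa(X,\bar X)$ with $\bar X$ an \emph{affine} log smooth (hence log regular) log scheme over $k$ whose log structure comes from the snc divisor $D=\bar X\setminus X$. Corollary \ref{CohomologyIsColimit} gives
\[
H^i(\mathcal X,\Omega^{+,n})\cong \colim_{\bar Y}H^i\!\big(\bar Y,\Omega^{+,n}|_{\bar Y}\big),
\]
the colimit running over the $X$-modifications $\bar Y$ of $\bar X$. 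Assuming Conjectures \ref{ConjResolution3} and \ref{GeneralResolution}, Remark \ref{DescriptionOfOpens} shows that the modifications obtained from $\bar X$ by a finite chain of blow-ups in smooth centers contained in the boundary (kept snc at every step) are cofinal. For such a $\bar Y$ the pair $(Y_{\mathrm{tr}},\bar Y)$ is log smooth, hence on every open of $\bar Y$ the sheaf $\Omega^{+,n}|_{\bar Y}$ agrees with $\Omega^{n,\llog}_{\bar Y}$ by \cite[Theorem 8.12]{LogDiff}; so $\Omega^{+,n}|_{\bar Y}\cong\Omega^{n,\llog}_{\bar Y}$. Thus everything reduces to proving $H^i(\bar Y,\Omega^{n,\llog}_{\bar Y})=0$ for $i\geq 1$ for every such chain $\bar Y=\bar X_m\to\cdots\to\bar X_1=\bar X$.

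Since $\bar X$ is affine and $\Omega^{n,\llog}_{\bar Y}$ is coherent with $g\colon\bar Y\to\bar X$ proper, the Leray spectral sequence shows it suffices to prove $R^ig_*\Omega^{n,\llog}_{\bar Y}=0$ for $i\geq1$, which I do by induction on $m$ (the case $m=1$ being $g=\mathrm{id}$). Factor $g=h\circ\pi_{m-1}$ with $h\colon\bar X_{m-1}\to\bar X$ and $\pi_{m-1}$ the last blow-up. The Grothendieck composition spectral sequence reduces the claim to: \emph{(a)} $R^q(\pi_{m-1})_*\Omega^{n,\llog}_{\bar X_m}=0$ for $q\geq1$, and \emph{(b)} $(\pi_{m-1})_*\Omega^{n,\llog}_{\bar X_m}\cong\Omega^{n,\llog}_{\bar X_{m-1}}$, for then $R^pg_*\Omega^{n,\llog}_{\bar X_m}\cong R^ph_*\Omega^{n,\llog}_{\bar X_{m-1}}$ vanishes for $p\geq1$ by the inductive hypothesis applied to the shorter chain $h$. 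Statement \emph{(a)} is local on $\bar X_{m-1}$: over a connected affine open $V$ the morphism $\pi_{m-1}^{-1}(V)\to V$ is a blow-up of a connected affine log regular log scheme in a regular center contained in its boundary with the exceptional divisor meeting the strict transform of the boundary transversally, so Theorem \ref{AcyclicityProposition} gives $H^q(\pi_{m-1}^{-1}(V),\Omega^{n,\llog})=0$ for $q\geq1$, and quasi-coherence of $R^q(\pi_{m-1})_*\Omega^{n,\llog}_{\bar X_m}$ together with the affineness of $V$ yields its vanishing. For \emph{(b)}, a blow-up in a smooth center contained in the boundary is an isomorphism in codimension one in $(X,S)_\llog$ (it is an isomorphism over the complement of the center, which has codimension $\geq2$ unless the blow-up is already an isomorphism, and it does not change the trace open), so unramifiedness of $\Omega^{n,\llog}$ forces $\Gamma(V,\Omega^{n,\llog})\xrightarrow{\sim}\Gamma(\pi_{m-1}^{-1}(V),\Omega^{n,\llog})$ for all affine opens $V\subseteq\bar X_{m-1}$, which is \emph{(b)}. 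Hence the colimit is a filtered colimit of zero groups.

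The geometric heart is contained entirely in Theorem \ref{AcyclicityProposition}; the main obstacle here is organizational. One must verify that the cofinal family of modifications can genuinely be taken to consist of chains of blow-ups in smooth centers lying in a boundary divisor that stays snc at every step, so that the transversality hypothesis of Theorem \ref{AcyclicityProposition} applies verbatim at every stage, and one must carefully match $\Omega^{+,n}|_{\bar Y}$ with the honest logarithmic differentials $\Omega^{n,\llog}_{\bar Y}$, which is where the $(-)_{\lim}$ formalism and \cite[Theorem 8.12]{LogDiff} enter. The passage from a single blow-up over an affine base to a tower of blow-ups over a no-longer-affine base is exactly what the Grothendieck spectral sequence together with the unramifiedness of $\Omega^{n,\llog}$ take care of.
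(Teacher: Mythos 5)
Your proof is correct and follows essentially the same route as the paper: reduce to open cohomology via Propositions \ref{StronglyEtaleEqualOpen} and \ref{TameEqualStronglyEtale}, pass to the colimit over modifications by Corollary \ref{CohomologyIsColimit} and Remark \ref{DescriptionOfOpens}, identify $\Omega^{+,n}|_{\bar{Y}}$ with $\Omega^{n,\llog}_{\bar{Y}}$ by \cite[Theorem 8.12]{LogDiff}, and kill the cohomology along the chain of blow-ups using Theorem \ref{AcyclicityProposition}. Your only deviation is organizational: you run the induction through $R^i g_*$ over the affine base via the Grothendieck spectral sequence and, usefully, make explicit the identification $(\pi_{m-1})_*\Omega^{n,\llog}_{\bar{X}_m}\cong\Omega^{n,\llog}_{\bar{X}_{m-1}}$ through unramifiedness, a step the paper's Leray-type argument leaves implicit.
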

\begin{proof}
	We proved in \cref{StronglyEtaleEqualOpen} and \cref{TameEqualStronglyEtale} that the tame, strongly \'etale and open cohomology groups of $\Omega^{+,n}$ agree.
	We have proved in \cref{CohomologyIsColimit} that the latter is isomorphic to
	 $\textrm{colim}_{\bar{Y}}H^i(\bar{Y},\Omega^{+,n}|_{\bar{Y}}),$
	where $\bar{Y}$ are $A$-modifications of $A^+$. As $A$ is of finite type over $A^+$, we may assume that these are compactifications. Under an adequate assumption of resolution of singularities (i.e. Conjectures \ref{ConjResolution3} and \ref{GeneralResolution}) we may assume that $\bar{Y}$ is log smooth over $k$ with the log structure associated to $\bar{Y}\backslash \Spec(A)$. Furthermore, as discussed in \cref{DescriptionOfOpens}, we may dominate these modifications by chains of blow ups at smooth centers, intersecting the boundary transversally.  Hence $\Omega^+|_{\bar{Y}}=\Omega^{n,\llog}_{\bar{Y}}$ by \cite[Theorem 8.12]{LogDiff}. Using \cref{AcyclicityProposition}, we obtain $H^i(\bar{Y},\Omega^+|_{\bar{Y}})=H^i(\bar{Y},\Omega^{n,\llog}_{\bar{Y}})=0$. This is proven by induction on the length of the chain of blow-ups:
	
	Let $\pi:\bar{Y}_2\to \bar{Y}_1$ be a morphism in the chain of blow ups. Denote its smooth center by $Z$. Note that $\Spec(A)$ is open in $\bar{Y}_i$ and that $\bar{Y}_i\backslash \Spec(A)$ is an snc divisor for $i=1,2$.  Assume that we know that $H^i(\bar{Y}_1, \Omega^{n,\log})=0$. Cover $\bar{Y}_1$ by affine opens $U_i$. Set $V_i:=\bar{Y}_2\times_{\bar{Y}_1}U_i$. By \cref{AcyclicityProposition}, we know that $H^i(V_i,\Omega^{n,\log}) =0$. We apply \cite[Proposition 02O5]{Stacks}, to conclude that $R^i\pi_*\Omega^{+,n}=0$ and hence
	\[
	H^i(\bar{Y}_2, \Omega^{+,n})=0. \qedhere
	\]
\end{proof}

\section{Gysin morphism and purity in positive characteristic}
\subsection{The Gysin morphism on \texorpdfstring{$\Omega^{+,n}$}{Omega⁺}}
In this section we define the Gysin isomorphism for the sheaf of $\mathcal{O}^+$-modules $\Omega^{+,n}$. From now on the functor $i^!$ is taken in the tame topology, unless we explicitly say otherwise.
\begin{theorem}\label{Ri!IsConcentratedInDegree1}
	Let $k$ be a perfect field of characteristic $p$. Assume resolution of singularities and embedded resolution of singularities with boundary over $k$. Let $(X,Z)$ be a smooth pair over $k$ of pure codimension $1$. Set  $\mathcal{X}:=\Spa(X,k)$ and $\mathcal{Z}:=\Spa(Z,k)$. Then the sheaf $R^ni^!\Omega^{+,n}_{\mathcal{X}}$ is trivial for $n\neq 1$.
\end{theorem}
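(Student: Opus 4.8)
The plan is to establish the stronger vanishing $R^j i^!\,\Omega^{+,m}_{\mathcal X}=0$ for all integers $m\geq 0$ and all $j\neq 1$; the case $j=m=n$ is then exactly the statement. Since $i^!$, and hence every $R^j i^!$, takes values in sheaves supported on $\mathcal Z$, it suffices to show that the stalk $(R^j i^!\,\Omega^{+,m}_{\mathcal X})_x$ vanishes at every point $x\in\mathcal Z$. By the standard description of the derived functors of the functor of sections with support in $\mathcal Z$, this stalk is the filtered colimit $\colim_{\mathcal V} H^j_{\mathcal Z\cap\mathcal V,\,t}(\mathcal V,\Omega^{+,m}_{\mathcal X})$ over a cofinal system of affinoid open neighbourhoods $\mathcal V$ of $x$ in $\mathcal X$, where cohomology with support is taken in the tame topology.

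The first step is to exhibit a convenient such cofinal system. Starting from any affinoid neighbourhood of $x$ carrying a log smooth presentation $(U,\bar U_0)$ --- these exist and form a basis of the topology by Remark \ref{DescriptionOfOpens}, granting Conjectures \ref{ConjResolution3} and \ref{GeneralResolution} --- we first shrink $U$ so that $Z\cap U$ is cut out by a single function, making both $U$ and $U\setminus Z$ affine. Applying the embedded resolution with boundary, Conjecture \ref{GeneralResolution}, to the triple consisting of $\bar U_0$, its boundary divisor $\bar U_0\setminus U$, and the closure of $Z\cap U$ in $\bar U_0$, we obtain a modification $\bar U\to\bar U_0$ which is an isomorphism over $U$ and for which $\bar U$ is smooth, $\bar U\setminus U$ is an snc divisor, and the strict transform $\bar Z_U$ of the closure of $Z\cap U$ is smooth and meets $\bar U\setminus U$ transversally. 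Setting $\mathcal V:=\Spa(U,\bar U)=\Spa(U,\bar U_0)$, the pair $(U,\bar U)$ is a log smooth presentation of $\mathcal V$, and the pair $(U\setminus Z,\bar U)$ is a log smooth presentation of the affinoid $\mathcal V\setminus\mathcal Z$, because the complement of $U\setminus Z$ in $\bar U$ is the snc divisor $(\bar U\setminus U)\cup\bar Z_U$. Such $\mathcal V$ are cofinal among open neighbourhoods of $x$.

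Next I would feed this into the excision long exact sequence for the closed immersion $\mathcal Z\cap\mathcal V\hookrightarrow\mathcal V$ with open complement $\mathcal V\setminus\mathcal Z$:
\[
\cdots\to H^{j-1}_t(\mathcal V\setminus\mathcal Z,\Omega^{+,m})\to H^j_{\mathcal Z\cap\mathcal V,\,t}(\mathcal V,\Omega^{+,m})\to H^j_t(\mathcal V,\Omega^{+,m})\to H^j_t(\mathcal V\setminus\mathcal Z,\Omega^{+,m})\to\cdots.
\]
By Theorem \ref{CohomologyOfOmega+IsZero}, applied to the two affinoid log smooth presentations $\mathcal V$ and $\mathcal V\setminus\mathcal Z$ just produced, one has $H^\ell_t(\mathcal V,\Omega^{+,m})=H^\ell_t(\mathcal V\setminus\mathcal Z,\Omega^{+,m})=0$ for all $\ell\geq 1$. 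Consequently $H^j_{\mathcal Z\cap\mathcal V,\,t}(\mathcal V,\Omega^{+,m})=0$ for every $j\geq 2$; and for $j=0$ the sequence identifies $H^0_{\mathcal Z\cap\mathcal V,\,t}(\mathcal V,\Omega^{+,m})$ with the kernel of the restriction map $\Omega^{+,m}(\mathcal V)\to\Omega^{+,m}(\mathcal V\setminus\mathcal Z)$, which is zero because $\Omega^{+,m}_{\mathcal X}$ is a subsheaf of the locally free, hence torsion-free, sheaf $\Omega^m_{\mathcal X}$ and $\mathcal V\setminus\mathcal Z$ is dense in the reduced space $\mathcal V$. Passing to the colimit over $\mathcal V$ gives $(R^j i^!\,\Omega^{+,m}_{\mathcal X})_x=0$ for all $j\neq 1$ and all $x\in\mathcal Z$, which proves the claim.

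I expect the main obstacle to be the second step: one must produce a cofinal family of compactifications that serve simultaneously as log smooth presentations of $\mathcal V$ and of $\mathcal V\setminus\mathcal Z$, i.e. one must compactify $X$ together with $Z$ so that $(\bar U\setminus U)\cup\bar Z_U$ becomes a normal crossing divisor without modifying $U$. This is precisely what Conjecture \ref{GeneralResolution} is designed to provide --- in particular the transversality in its conclusion is what is being used here. Granted this cofinal system, the rest is a formal consequence of the acyclicity result Theorem \ref{CohomologyOfOmega+IsZero} and the excision sequence, together with the elementary torsion-freeness argument in degree zero.
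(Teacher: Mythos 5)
Your overall strategy coincides with the paper's: reduce to stalks, use the excision sequence, invoke the resolution conjectures to manufacture neighbourhoods for which both the neighbourhood and its complement of $\mathcal{Z}$ are controlled, apply the acyclicity result (Theorem \ref{CohomologyOfOmega+IsZero}) to kill the outer terms, and treat $j=0$ separately. There is, however, a genuine gap in your first reduction step: the functor $i^!$ (hence each $R^j i^!$) is taken in the \emph{tame} topology, so $R^j i^!\Omega^{+,m}_{\mathcal{X}}$ is a sheaf on the tame site of $\mathcal{Z}$, and its vanishing must be tested on stalks of the tame topos, i.e.\ on filtered colimits of $H^j_{\mathcal{Z}\times_{\mathcal{X}}\mathcal{U},\,t}(\mathcal{U},\Omega^{+,m})$ over \emph{tame \'etale} neighbourhoods $\mathcal{U}\to\mathcal{X}$ of a geometric point, not merely over open affinoid neighbourhoods. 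Your colimit over open $\mathcal{V}$ only computes the stalk of the pushforward of $R^j i^!\Omega^{+,m}$ to the topological site, and vanishing of that pushforward does not imply vanishing of the tame sheaf; this is why the paper's proof takes the colimit explicitly over $x\in\mathcal{U}\xrightarrow{\textrm{tame}}\mathcal{X}$. The gap is repairable with the argument you already have: any tame \'etale neighbourhood is dominated by affinoids $\Spa(V,\bar V)$ with $V$ \'etale over $X$, the resolution conjectures apply to these just as to opens, excision and Theorem \ref{CohomologyOfOmega+IsZero} are insensitive to the difference, and your torsion-freeness argument in degree $0$ also goes through for \'etale $V$ (there $Z\times_X V$ is still cut out by a nonzerodivisor and $\Omega^m$ is still locally free). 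But as written the reduction is incomplete.

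A second, smaller point: Theorem \ref{CohomologyOfOmega+IsZero} is stated for affinoids $\Spa(A,A^+)$ associated to a log smooth log scheme, so in particular with $\Spec(A^+)$ affine. Your $\mathcal{V}\setminus\mathcal{Z}=\Spa(U\setminus Z,\bar U)$ acquires a log smooth presentation only through the non-affine blowup $\bar U$, so the theorem does not apply verbatim; the paper handles this by further replacing $\bar U$ by affine open subschemes (so that both $\bar U$ and $U\setminus Z$ become affine) before invoking it, and you need the same shrinking (or a mild strengthening of Theorem \ref{CohomologyOfOmega+IsZero}). Apart from these two points your argument matches the paper's; in particular your degree-$0$ argument via torsion-freeness of $\Omega^m$ on the reduced space is a reasonable, more elementary substitute for the paper's appeal to \cite[Lemma 8.8]{LogDiff} together with the identification $\Omega^{+}=\Omega^{\llog}_{\lim}$.
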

\begin{proof}
	Triviality of the sheaves $R^ni^!\Omega^{+,n}_{\mathcal{X}}$ for $n\geq 2$ can be checked on stalks. Let $x$ be a point in $\mathcal{Z}\subset\mathcal{X}$. Then when $n\geq 2$, the group $(R^ni^!\Omega^+_{\mathcal{X}})_x$ is isomorphic to the group
	\[
	\begin{split}
		\colim_{x\in\mathcal{U}\xrightarrow{\textrm{tame}} \mathcal{X}}H^n_{\mathcal{Z}\cap \mathcal{U}, t}(\mathcal{U},\Omega^{+,n})\cong &\colim_{x\in \mathcal{U}\xrightarrow{\textrm{tame}} \mathcal{X}}H^{n-1}_{t}(\mathcal{U}\backslash\mathcal{Z},\Omega^{+,n})\\ \cong&\colim_{x\in \mathcal{U}\xrightarrow{\textrm{tame}} \mathcal{X}}H^{n-1}(\mathcal{U}\backslash\mathcal{Z},\Omega^{+,n}) 
	\end{split}
\]
	where the colimit is taken over affinoids $\mathcal{U}$ containing $x$ and which are tame over $\mathcal{X}$.
	
	By assuming resolution of singularities (\cref{ConjResolution3}) as well as embedded resolution of singularities with boundary (\cref{GeneralResolution}), the category of tame neighborhoods $\{x\in \mathcal{U}\to \mathcal{X}\}$ is dominated by adic spaces of the form $\Spa(U,\bar{U})$, such that $(U\backslash Z,\bar{U})$ is a pair associated to a log smooth log scheme.
	
	
	 Since $Z$ has codimension $1$ in $X$, we may assume that both \( U \setminus Z \) and \( \bar{U} \) are affine, after replacing \( \bar{U} \) by an affine open subscheme. Then, by \cref{CohomologyOfOmega+IsZero}, we conclude that
	 $$H^{n-1}(\mathcal{U}\backslash \mathcal{Z},\Omega^{+,n})= H^{n-1}(\Spa(U \backslash Z,\bar{U}),\Omega^{+,n})=0 \textrm{ for } n\geq 2.$$
	 
	 It remains to check that \( i^! \Omega^{+,n}_{\mathcal{X}} = 0 \). By definition,
	 \[
	 i^! \Omega^{+,n}_{\mathcal{X}}(\mathcal{U}\cap \mathcal{Z})=\ker \big(\Omega^{+,n}_{\mathcal{X}}(\mathcal{U})\to  \Omega^{+,n}_{\mathcal{X}}(\mathcal{U}\backslash \mathcal{Z})\big)
	 \]
	 for every open $\mathcal{U}\subset \mathcal{X}$.
	
	By \cite[Lemma 8.8]{LogDiff}, $i^! \Omega^{n,\log}_{\mathcal{X},\lim}(\mathcal{U}\cap \mathcal{Z})$ is trivial. Since under our assumptions the equality \( \Omega^{n,\log}_{\mathcal{X}, \lim} = \Omega^{+,n} \) holds, we conclude that \( i^! \Omega^{+,n}_{\mathcal{X}}(\mathcal{U} \cap \mathcal{Z}) = 0 \).
\end{proof}

\begin{theorem}\label{ValueOfRi!OmegaCodim1}
	Let $k$ be a perfect field of characteristic $p$. Let $(X,Z)$ be a smooth pair over $k$ of pure codimension $1$.
	 Assume that Conjectures \ref{ConjResolution3} and \ref{GeneralResolution} hold.
	  Define $\mathcal{X}:=\Spa(X,k)$ and $\mathcal{Z}:=\Spa(Z,k)$. Then we have an isomorphism of $\mathcal{O}^+_\mathcal{Z}$-modules:
	\[
	R^1i^!\Omega_{\mathcal{X}}^{+,n}\to \Omega_{\mathcal{Z}}^{+,n-1} .
	\]
\end{theorem}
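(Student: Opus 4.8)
The plan is to read $R^{1}i^{!}\Omega^{+,n}_{\mathcal{X}}$ off the excision triangle and then to compute it using the residue sequence of Proposition~\ref{ResidueSequences} on log smooth compactifications. Write $j\colon\mathcal{U}=\mathcal{X}\setminus\mathcal{Z}\to\mathcal{X}$ for the complementary open immersion; since the points of $\mathcal{Z}$ are precisely those of $\mathcal{X}$ lying over $Z$, one has $\mathcal{U}\cong\Spa(X\setminus Z,k)$ and $j^{*}\Omega^{+,n}_{\mathcal{X}}=\Omega^{+,n}_{\mathcal{U}}$.

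\emph{Step 1: collapsing the excision triangle.} First I would show $R^{q}j_{*}\Omega^{+,n}_{\mathcal{U}}=0$ for $q\geq 1$. On stalks at $x\in\mathcal{Z}$ this group is $\colim H^{q}_{t}(\mathcal{V}\setminus\mathcal{Z},\Omega^{+,n})$, the colimit over tame \'etale neighbourhoods $\mathcal{V}$ of $x$; exactly as in the proof of Theorem~\ref{Ri!IsConcentratedInDegree1}, Conjectures~\ref{ConjResolution3} and~\ref{GeneralResolution} let us take $\mathcal{V}=\Spa(V,\bar{V})$ with $\bar{V}$ affine and $(\bar{V}\setminus V)\cup\bar{Z}_{V}$ an snc divisor ($\bar{Z}_{V}$ the closure of $Z\cap V$ in $\bar V$), so that $\mathcal{V}\setminus\mathcal{Z}=\Spa(V\setminus Z,\bar{V})$ is associated to a log smooth log scheme with affine total space and $H^{q}_{t}(\mathcal{V}\setminus\mathcal{Z},\Omega^{+,n})=0$ for $q\geq 1$ by Theorem~\ref{CohomologyOfOmega+IsZero}. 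Combining this with $i^{!}\Omega^{+,n}_{\mathcal{X}}=0$ and $R^{q}i^{!}\Omega^{+,n}_{\mathcal{X}}=0$ for $q\geq 2$ from Theorem~\ref{Ri!IsConcentratedInDegree1} (whose argument via Theorem~\ref{CohomologyOfOmega+IsZero} applies for every $n$), and with exactness of $i_{*}$ for a closed immersion, the long exact sequence of cohomology sheaves of the excision triangle $Ri_{*}Ri^{!}\Omega^{+,n}_{\mathcal{X}}\to\Omega^{+,n}_{\mathcal{X}}\to Rj_{*}\Omega^{+,n}_{\mathcal{U}}$ collapses to a short exact sequence
\[
0\to\Omega^{+,n}_{\mathcal{X}}\to j_{*}\Omega^{+,n}_{\mathcal{U}}\to i_{*}R^{1}i^{!}\Omega^{+,n}_{\mathcal{X}}\to 0 .
\]

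\emph{Step 2: the residue sequence.} By resolution of singularities (Conjectures~\ref{ConjResolution3} and~\ref{GeneralResolution}), the opens $\Spa(W,\bar{W})\subset\mathcal{X}$ with $\bar{W}$ affine and smooth, $D':=\bar{W}\setminus W$, $E:=\bar{Z}_{W}$ the closure of $Z\cap W$, and $D'+E$ an snc divisor, form a basis of $\mathcal{X}$. For such an open, \cite[Theorem 8.12]{LogDiff} identifies $\Omega^{+,n}_{\mathcal{X}}(\Spa(W,\bar{W}))=\Gamma\bigl(\bar{W},\Omega^{n}_{\bar{W}}(\llog D')\bigr)$, $j_{*}\Omega^{+,n}_{\mathcal{U}}(\Spa(W,\bar{W}))=\Gamma\bigl(\bar{W},\Omega^{n}_{\bar{W}}(\llog(D'+E))\bigr)$, and $i_{*}\Omega^{+,n-1}_{\mathcal{Z}}(\Spa(W,\bar{W}))=\Gamma\bigl(E,\Omega^{n-1}_{E}(\llog(D'|_{E}))\bigr)$, the last because $(Z\cap W,E)$ is then a log smooth presentation of $\Spa(W,\bar{W})\cap\mathcal{Z}$. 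The second residue sequence of Proposition~\ref{ResidueSequences} for $D=D'+E$ and $D_{1}=E$ is a short exact sequence of coherent sheaves on the affine scheme $\bar{W}$, so taking global sections yields
\[
0\to\Omega^{+,n}_{\mathcal{X}}\to j_{*}\Omega^{+,n}_{\mathcal{U}}\xrightarrow{\ \mathrm{res}\ }i_{*}\Omega^{+,n-1}_{\mathcal{Z}}\to 0
\]
over this basis; since the residue morphism is independent of the local equation chosen for $E$, it glues to a global map $\mathrm{res}$ and the sequence holds over all of $\mathcal{X}$.

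\emph{Step 3 and the main difficulty.} Comparing the two short exact sequences, both exhibit the quotient of $j_{*}\Omega^{+,n}_{\mathcal{U}}$ by $\Omega^{+,n}_{\mathcal{X}}$, so $\mathrm{res}$ descends to an isomorphism $i_{*}R^{1}i^{!}\Omega^{+,n}_{\mathcal{X}}\xrightarrow{\sim}i_{*}\Omega^{+,n-1}_{\mathcal{Z}}$; as $i_{*}$ is fully faithful and $\mathrm{res}$ is $\mathcal{O}^{+}$-linear, this yields the asserted $\mathcal{O}^{+}_{\mathcal{Z}}$-linear isomorphism $R^{1}i^{!}\Omega^{+,n}_{\mathcal{X}}\xrightarrow{\sim}\Omega^{+,n-1}_{\mathcal{Z}}$. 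I expect Step 2 to be the crux: one must check that resolution really furnishes a cofinal system of affine log smooth compactifications of $\mathcal{X}$ in which $\bar{Z}$ is transversal to the boundary, so that \cite[Theorem 8.12]{LogDiff} simultaneously computes all three of $\Omega^{+,n}_{\mathcal{X}}$, $j_{*}\Omega^{+,n}_{\mathcal{U}}$ and $\Omega^{+,n-1}_{\mathcal{Z}}$ as global logarithmic differentials, and that the residue map is canonical enough to glue across this system.
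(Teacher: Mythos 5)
Your proposal is correct and follows essentially the same route as the paper's own proof: excision identifies $R^1i^!\Omega^{+,n}_{\mathcal{X}}$ with the cokernel of $\Omega^{+,n}_{\mathcal{X}}\to j_*\Omega^{+,n}_{\mathcal{U}}$, and under Conjectures \ref{ConjResolution3} and \ref{GeneralResolution} the second residue sequence of Proposition \ref{ResidueSequences} on affine log smooth compactifications, read through \cite[Theorem 8.12]{LogDiff}, computes this cokernel as $i_*\Omega^{+,n-1}_{\mathcal{Z}}$. Your write-up is in fact slightly more careful than the paper's (whose displayed sequences have the first two arrows reversed, evidently typos), and your Step 1 vanishing of $R^qj_*$ for $q\geq 1$ is more than is needed for the degree-one statement.
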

\begin{proof}
	Define $\mathcal{U}:=\mathcal{X}\backslash \mathcal{Z}$ and let $j:\mathcal{U}\to \mathcal{X}$ be the natural inclusion. By the excision long exact sequence we know that $R^1i^!\Omega_{\mathcal{X}}^{+,n}$ is isomorphic to the cokernel of $j_*\Omega_{\mathcal{U}}^{+,n}\to  \Omega_{\mathcal{X}}^{+,n}$. 
	
	By resolution of singularities, we may assume that \( \mathcal{U} = \Spa(U, \bar{U}) \) where \( \bar{U} \) is smooth, and both \( D := \bar{U} \setminus U \) and \( \bar{U} \setminus (U \setminus Z) = D \cup \bar{Z} \) are simple normal crossings divisors. Here, \( \bar{Z} \subseteq \bar{U} \) denotes the schematic image of \( Z \cap U \) in $\bar{U}$.
	
	 We may now use the second residue sequence in \cref{ResidueSequences}, stating that 
	\[
	0\to \Omega^{n,\log}_{\bar{U}}(\log (D+\bar{Z}))\to \Omega^{n,\log}_{\bar{U}}(\log (D))\to \Omega^{n-1,\log}_{\bar{Z}}(D|_{\bar{Z}})\to 0.
	\]
	
	The global sections of these terms are identified with the sections of \( \Omega^{+,n}_{\mathcal{X}}(\mathcal{X}) \), \( \Omega^{+,n}_{\mathcal{U}}(\mathcal{U}) \), and \( \Omega^{+,n-1}_{\mathcal{Z}}(\mathcal{Z}) \), respectively. Therefore, we obtain an exact sequence
	\begin{equation}\label{SESOmega+1}
		0 \to \Omega^{+,n}_{\mathcal{X}} \to j_* \Omega^{+,n}_{\mathcal{U}} \to i_* \Omega^{+,n-1}_{\mathcal{Z}} \to 0,
	\end{equation}
	which completes the proof.
\end{proof}
\begin{remark}
	\begin{itemize}
		 \item 	In the case $n=0$, the claim is equivalent to $R^1i^! \mathcal{O}^+_{\mathcal{X}}\cong 0$. This is true since $j_*\mathcal{O}^+_{\mathcal{U}}\cong \mathcal{O}^+_{\mathcal{X}}$, even without assuming resolution of singularities.
		 \item  In a future work, we give a proof of \cref{ValueOfRi!OmegaCodim1} without the assumption of Resolution of singularities.
	\end{itemize}

\end{remark}
\begin{corollary}\label{ValuesOfRi!Omega+}
	Let $k$ be a perfect field of positive characteristic $p$. Let $(X,Z)$ be a smooth pair of pure codimension $r$. Assume that resolution of singularities holds (Conjectures \ref{ConjResolution3} and \ref{GeneralResolution}). Define $\mathcal{X}:=\Spa(X,k)$ and $\mathcal{Z}:=\Spa(Z,k)$. Then we have an isomorphism in $D(\mathcal{Z})$:
	\[
	\phi_{\mathcal{Z}/\mathcal{X}}:\Omega_{\mathcal{Z}}^{+,n-r}[-r]\to Ri^!\Omega_{\mathcal{X}}^{+,n}
	\]
\end{corollary}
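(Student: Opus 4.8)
\emph{Strategy.} The plan is to reduce the codimension-$r$ statement to the codimension-$1$ results already established (Theorems \ref{Ri!IsConcentratedInDegree1} and \ref{ValueOfRi!OmegaCodim1}) by factoring $i$ as a composition of smooth closed immersions of codimension $1$ and exploiting the functoriality of the exceptional pullback. Recall that for composable closed immersions $\mathcal{Z}\xrightarrow{i''}\mathcal{W}\xrightarrow{i'}\mathcal{X}$ in the tame topology one has $Ri_*=Ri'_*\circ Ri''_*$ (the pushforwards $i'_*$, $i''_*$ are exact, being pushforwards along closed immersions), so by uniqueness of adjoints $Ri^!\cong R(i'')^!\circ R(i')^!$ for $i=i'\circ i''$. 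Next, Zariski-locally on $X$ around any point $z\in Z$ the immersion $Z\hookrightarrow X$ admits an intermediate smooth divisor: since $Z/k$ is smooth of codimension $r$ in the smooth scheme $X/k$, the conormal sequence is exact with $\mathcal{I}_Z/\mathcal{I}_Z^2$ locally free of rank $r$, so locally $\mathcal{I}_Z=(g_1,\dots,g_r)$ with $dg_1,\dots,dg_r$ mapping to a free local basis of $i^*\Omega^1_{X/k}$ onto its image; after shrinking $X$ about $z$ the hypersurface $W:=V(g_1)$ is then smooth over $k$ of codimension $1$ in $X$ and contains $Z$ as a smooth closed subscheme of codimension $r-1$. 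Applying $\Spa(-,k)$ factors $\mathcal{Z}\hookrightarrow\mathcal{X}$ through $\mathcal{W}:=\Spa(W,k)$.

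\emph{Induction on $r$.} The base case $r=1$ is immediate: by Theorem \ref{Ri!IsConcentratedInDegree1} the complex $Ri^!\Omega^{+,n}_{\mathcal{X}}$ is concentrated in degree $1$, and by Theorem \ref{ValueOfRi!OmegaCodim1} that cohomology sheaf is $\Omega^{+,n-1}_{\mathcal{Z}}$, so $Ri^!\Omega^{+,n}_{\mathcal{X}}\cong\Omega^{+,n-1}_{\mathcal{Z}}[-1]$. For the inductive step, using the local factorization and the functoriality of $Ri^!$, together with the fact that $R(i'')^!$ is triangulated (hence commutes with shifts),
\[
Ri^!\Omega^{+,n}_{\mathcal{X}}\;\cong\;R(i'')^!\bigl(R(i')^!\Omega^{+,n}_{\mathcal{X}}\bigr)\;\cong\;R(i'')^!\bigl(\Omega^{+,n-1}_{\mathcal{W}}[-1]\bigr)\;\cong\;\bigl(R(i'')^!\Omega^{+,n-1}_{\mathcal{W}}\bigr)[-1]\;\cong\;\Omega^{+,n-r}_{\mathcal{Z}}[-(r-1)][-1]\;=\;\Omega^{+,n-r}_{\mathcal{Z}}[-r],
\]
where the second isomorphism is the codimension-$1$ case applied to $i':\mathcal{W}\to\mathcal{X}$ and the fourth is the inductive hypothesis (codimension $r-1$) applied to $i'':\mathcal{Z}\to\mathcal{W}$ with $\Omega^{+,(n-1)}_{\mathcal{W}}$ in place of $\Omega^{+,n}_{\mathcal{X}}$. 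In particular $R^ji^!\Omega^{+,n}_{\mathcal{X}}=0$ for $j\neq r$ and $R^ri^!\Omega^{+,n}_{\mathcal{X}}\cong\Omega^{+,n-r}_{\mathcal{Z}}$ locally, which already records the content of the statement up to checking that the local identifications are canonical.

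\emph{Gluing — the main obstacle.} What remains, and is the only real work, is to promote these Zariski-local isomorphisms to a single global isomorphism $\Omega^{+,n-r}_{\mathcal{Z}}[-r]\xrightarrow{\sim}Ri^!\Omega^{+,n}_{\mathcal{X}}$ in $D(\mathcal{Z})$, i.e. to check independence of the chosen flag $Z\subset W\subset X$. The codimension-$1$ isomorphism of Theorem \ref{ValueOfRi!OmegaCodim1} is itself canonical: it presents $R^1(i')^!\Omega^{+,n}_{\mathcal{X}}$ as the intrinsically defined cokernel of $j_*\Omega^{+,n}_{\mathcal{X}\setminus\mathcal{W}}\to\Omega^{+,n}_{\mathcal{X}}$, and then identifies it with $\Omega^{+,n-1}_{\mathcal{W}}$ via the residue map, which by the proof of Proposition \ref{ResidueSequences} is independent of the local equation of $W$. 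Hence the composite depends only on the chain of intermediate subschemes, and the point to verify is that two such chains induce the same map; equivalently, one constructs the Gysin morphism $\Omega^{+,n-r}_{\mathcal{Z}}[-r]\to Ri^!\Omega^{+,n}_{\mathcal{X}}$ directly (for instance by assembling the second residue sequences of Proposition \ref{ResidueSequences} along a flag, or via a Koszul-type resolution of $i_*\mathcal{O}_{\mathcal{Z}}$) and checks that it restricts on each flag-trivializing open to the composite isomorphism above. Once the map is known to be globally well defined, the property of being an isomorphism may be tested after restriction to a Zariski cover over which a flag exists, where it is exactly the isomorphism produced by the induction. I expect no further geometric input to be needed beyond this bookkeeping.
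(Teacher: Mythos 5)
Your proposal is correct and follows essentially the same route as the paper: induction on the codimension, a local factorization of $i$ through an intermediate smooth closed subscheme, the identity $Ri^!\cong R(i'')^!\circ R(i')^!$, and the codimension-$1$ case supplied by Theorems \ref{Ri!IsConcentratedInDegree1} and \ref{ValueOfRi!OmegaCodim1} (the paper factors with the divisorial step $Z\subset Z'$ at the bottom rather than your $W\subset X$ at the top, an immaterial difference). If anything you are more scrupulous than the paper, which works purely locally and leaves the gluing/canonicity of the local identifications implicit, exactly the point you flag via the intrinsic description of $R^1(i')^!$ as a cokernel and the equation-independence of the residue map.
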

\renewcommand\qedsymbol{$\underset{\cref{ValuesOfRi!Omega+} \textrm{ when } Z \textrm{ is a complete intersection}}{\square}$}
\begin{proof}
	 This is done by induction on the codimension of $Z$. When $\textrm{codim}_Z(X)=1$ this is proven by combining Theorems \ref{CohomologyOfOmega+IsZero} and \ref{ValueOfRi!OmegaCodim1}. For general codimension $r$, assume that $Z$ is a complete intersection of smooth schemes. Then there exists a smooth subscheme $Z'$, which is also a complete intersection in $X$, such that
	 $Z\xrightarrow{i_1} Z' \xrightarrow{i_2} X $,  and $\textrm{codim}_{Z'}(X)=r-1$. 
	   We obtain
	 \[
	 	Ri^!\Omega^{+,n}_{\mathcal{X}}\cong R(i_2 \circ i_1)^!\Omega^{+,n}_{\mathcal{X}}\cong Ri_1^! \big(Ri_2^! \Omega^{+,n}_{\mathcal{X}}\big)\cong Ri_1^! \Omega^{+,n-r+1}_{\Spa(Z',k)}[-r+1]\cong \Omega^{+,n-r}_{\mathcal{Z}}[-r]. \qedhere
	 \]
\end{proof}
\renewcommand\qedsymbol{$\square$}
Milne's proof of semi-purity for log de Rham-Witt sheaves \cite[Proposition 2.1]{MilneValuesOfZeta} uses the following proposition
\begin{proposition}
	Let $k$ be a field. 
	There is a family of homomorphisms $$\phi_{Z/X}:\Omega^{s-r}_{Z/k}\to R^{r}i^!\Omega_{X/k}^s,$$
	defined for every smooth pair $(X,Z)$ of codimension $r$, where $i:Z\to X$ is the inclusion. Furthermore, $\Phi$ is defined by  with the following properties
	\begin{itemize}
		\item $\phi_{Z/X}$ commutes with the differentials $d$.
		\item If $Z\xrightarrow{i_{Z/Y}} Y\xrightarrow{i_{Y/X}} X$ are closed immersions of smooth schemes over $k$, such that $(X,Z)$ is of codimension $r$ and $(Y,Z)$ is of codimension $t$, then $$R^{r-t}i^!_{Z/Y}(\phi_{Y/X})\circ \phi_{Z/Y}=\phi_{Z/X}.$$
		\item Assume that $r=1$. Let $z\in Z$ be a closed point and let $A=\Spec(\mathcal{O}_{X,z})$. Let $f=0$ be a local equation of $Z$ around $z$. Then $(R^1i^!\Omega^s_{X/k})_z$ is associated to the $A$-module $\Omega^s_{A[f^{-1}]}=\Omega_A^s$ and $(\phi_{Z/X})_z$ is associated to the residue map
		\[
		\omega\mapsto \omega \wedge \dlog(f): \; \Omega_{A/(f)}^{s-1}\to \Omega_{A[f^{-1}]}^s/\Omega_A^s.
		\]
	\end{itemize} 
\end{proposition}
However Milne gives the reference \cite[III.8]{HartshorneAmple}, which only states the proposition in characteristic $0$. The construction does however work in any characteristic (see the first part of the proof of \cite[Theorem III.8.1]{HartshorneAmple}). We obtain a morphism of sheaves over adic spaces
\[
\phi_{\mathcal{Z}/\mathcal{X}}:\Omega_{\mathcal{Z}/k}^{s-r}\to R^ri^!\Omega_{\mathcal{X}/k}^s,
\]
where $\mathcal{Z}=\Spa(Z,k)$ and $\mathcal{X}=\Spa(X,k)$.

\begin{proposition}\label{CommutingLocal}
	Under the assumptions of \cref{ValuesOfRi!Omega+}, the diagram
	\begin{equation}\label{DiagramOfPhis}
		\begin{tikzcd}
			\Omega_{\mathcal{Z}/k}^{+,s-r}\ar[r]\ar[d,swap,"\phi_{\mathcal{Z}/\mathcal{X}}"]&\Omega_{\mathcal{Z}/k}^{s-r}\ar[d,"\phi_{\mathcal{Z}/\mathcal{X}}"]\\ 
			R^ri^!\Omega_{\mathcal{X}/k}^{+,s}\ar[r]& R^ri^!\Omega_{\mathcal{X}/k}^s.
		\end{tikzcd}
	\end{equation}
	commutes.
\end{proposition}
\begin{proof}
	We start by assuming $r=1$. Consider a point $z\in \mathcal{Z}$. Let $(A,A^+)$ the local Huber pair associated to $\mathcal{X}_z$. There exists $f\in A$ such that $\mathcal{Z}_z=\Spa(A/(f),A^+/(f))$. The diagram \eqref{DiagramOfPhis} corresponds locally to the diagram
	\begin{equation}\label{DiagramOfPhisLocal}
		\begin{tikzcd}
			\Omega_{(A/(f),A^+/(f))}^{+,s-1}\ar[r]\ar[d,swap,"\omega\mapsto \omega\wedge\dlog f"]&\Omega_{A/(f)}^{s-1}\ar[d,"\omega\mapsto \omega\wedge\dlog f"]\\
			\Omega^{+,s}(\Spa(A[f^{-1}],A^+))/\Omega^{+,s}_{(A,A^+)}\ar[r]&\Omega^{s}_{A[f^{-1}]}/\Omega^{s}_{A}.
		\end{tikzcd}
	\end{equation}
	Since \eqref{DiagramOfPhisLocal} commutes, the diagram \eqref{DiagramOfPhis} also commutes for $r=1$. The claim for $r>1$ follows by induction.
\end{proof}
\begin{proof}[End of the proof of \cref{ValuesOfRi!Omega+}]
	In the proof of \cite[Théorème 3.5.8]{CohomologyLogGros}, Gros proves that the morphism
	\[
	\phi_{\mathcal{Z}/\mathcal{X}}:\Omega_{\mathcal{Z}/k}^{s-r}\to R^ri^!\Omega_{\mathcal{X}/k}^s,
	\]
	is an injection. Since $\Omega^{+,s-r}_{\mathcal{Z}/k}\to \Omega^{s-r}_{\mathcal{Z}/k}$ is also injective, the morphism 
	\[
	\Omega^{+,s-r}_{\mathcal{Z}/k}\to\Omega_{\mathcal{Z}/k}^{s-r}\xrightarrow{\Phi_{\mathcal{Z}/\mathcal{X}}}R^ri^!\Omega_{\mathcal{X}/k}^s
	\]
	is also injective and factors uniquely over $R^ri^!\Omega^{+,s}_{\mathcal{X}/k}$ locally (due to \eqref{DiagramOfPhisLocal}). We conclude that 
	\[
	\phi_{\mathcal{Z}/\mathcal{X}}: \Omega^{+,s-r}_{\mathcal{Z}/k}\to R^ri^!\Omega^{+,s}_{\mathcal{X}/k}
	\]
	is defined globally.
\end{proof}
\begin{remark}
	Note that, since the fact that diagram \eqref{DiagramOfPhis} commutes can be checked locally, \cref{CommutingLocal} holds even when $Z$ is not a complete intersection.
\end{remark}
\begin{corollary}\label{ValueOfRi!ZOmega}
	Assume the Conjectures \ref{ConjResolution3} and \ref{GeneralResolution}. 
	
	Let $i:Z\to X$ be a smooth closed immersion of pure codimension $r$. Let $i:\mathcal{Z}\to \mathcal{X}$ the associated inclusion of discretely ringed adic spaces. Then we have an isomorphism
	\[
	\phi_{\mathcal{Z}/\mathcal{X}}:Z\Omega^{n-r,+}_{\mathcal{Z}}[-r]\to R i^! Z\Omega^{n,+}  .
	\]
\end{corollary}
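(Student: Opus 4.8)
The plan is to transport the proof of Corollary \ref{ValuesOfRi!Omega+} from $\Omega^{+,\bullet}$ to the sheaf of closed forms $Z\Omega^{+,\bullet}$, with the single substitution of the residue sequence for closed forms (Corollary \ref{ResidueSequenceClosedForms}) in place of the second residue sequence of Proposition \ref{ResidueSequences}. First I would reduce to codimension one exactly as in loc. cit.: writing $Z\xrightarrow{i_1}Z'\xrightarrow{i_2}X$ locally with $\textrm{codim}_{Z'}(X)=1$ and using $R(i_2\circ i_1)^!=Ri_1^!\circ Ri_2^!$, the statement for codimension $r$ follows from the codimension-one case by induction on $r$. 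So the heart of the matter is codimension one.

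\textbf{The codimension-one case.} Put $j:\mathcal U=\mathcal X\setminus\mathcal Z\to\mathcal X$. By Remark \ref{DescriptionOfOpens}, $\mathcal X$ has a basis of affinoid opens $\Spa(V,\bar V)$ such that $(V\setminus Z,\bar V)$ is a log smooth presentation — so $\bar V$ is smooth and $D_V:=\bar V\setminus V$ together with the closure $\bar Z_V$ of $V\cap Z$ form an snc divisor — with $\bar V$ and $V\setminus Z$ affine. On such an open, Corollary \ref{ResidueSequenceClosedForms} applied to $\bar V$ with total divisor $D_V+\bar Z_V$ and distinguished component $\bar Z_V$ gives a short exact sequence whose three terms are the sections over $\Spa(V,\bar V)$ of $Z\Omega^{+,n}_{\mathcal X}$, of $j_*j^*Z\Omega^{+,n}_{\mathcal X}$, and of $i_*Z\Omega^{+,n-1}_{\mathcal Z}$, respectively (here $Z\Omega^{+,n}$ agrees with $(Z\Omega^{n,\log})_{\lim}$ by left exactness of $(-)_{\lim}$ together with Conjecture \ref{Hypothesis_L}, and the limit has the log smooth presentation as a final object). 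Hence there is an exact sequence of sheaves
\[
0\to Z\Omega^{+,n}_{\mathcal X}\to j_*j^*Z\Omega^{+,n}_{\mathcal X}\to i_*Z\Omega^{+,n-1}_{\mathcal Z}\to 0 .
\]
Injectivity of the first map gives $i^!Z\Omega^{+,n}_{\mathcal X}=0$, and the excision triangle $Ri^!\mathcal F\to\mathcal F\to Rj_*j^*\mathcal F\xrightarrow{+1}$ then identifies $R^1i^!Z\Omega^{+,n}_{\mathcal X}\cong i_*Z\Omega^{+,n-1}_{\mathcal Z}$ and $R^qi^!Z\Omega^{+,n}_{\mathcal X}\cong R^{q-1}j_*j^*Z\Omega^{+,n}_{\mathcal X}$ for $q\ge 2$.

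\textbf{The acyclicity input.} It remains to see that $R^{q-1}j_*j^*Z\Omega^{+,n}_{\mathcal X}=0$ for $q\ge 2$, i.e. the analogue of Theorem \ref{CohomologyOfOmega+IsZero} for closed forms: $H^i_t(\Spa(A,A^+),Z\Omega^{+,n})=0$ for $i\ge 1$ whenever $(A,A^+)$ is a log smooth discretely ringed affinoid. Since $Z\Omega^{+,n}=\ker(d\colon\Omega^{+,n}\to\Omega^{+,n+1})$ is a $p$-torsion sheaf in the tame topology, Propositions \ref{StronglyEtaleEqualOpen} and \ref{TameEqualStronglyEtale} go through for it — the only change is that the quasi-coherence used in Proposition \ref{StronglyEtaleEqualOpen} to make the \v{C}ech complex of a Cartesian strongly \'etale cover exact must be replaced by the observation that $Z\Omega^{n,\log}$ is an iterated extension of quasi-coherent sheaves via the Cartier and de Rham sequences, compatibly with the faithfully flat base change $A^+\to B^+$. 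Corollary \ref{CohomologyIsColimit} then reduces the vanishing to $H^i(\bar Y,Z\Omega^{n,\log}_{\bar Y})=0$ for $i\ge 1$, with $\bar Y$ running over the chains of blow-ups in smooth centres contained in the boundary furnished by Remark \ref{DescriptionOfOpens}. For a fixed such $\bar Y$ this follows by induction on $n$: for $n=0$ the sheaf $Z\Omega^{0,\log}_{\bar Y}=\ker(d\colon\mathcal O_{\bar Y}\to\Omega^1_{\bar Y})$ is the subsheaf of $p$-th powers, isomorphic as an abelian sheaf to $\mathcal O_{\bar Y}$ via Frobenius (as $\bar Y$ is reduced), hence acyclic by the $n=0$ case of Theorem \ref{AcyclicityProposition}; for $n\ge 1$ one feeds the exact sequences
\[
0\to Z\Omega^{n-1,\log}_{\bar Y}\to\Omega^{n-1,\log}_{\bar Y}\xrightarrow{d}B\Omega^{n,\log}_{\bar Y}\to 0 ,\qquad 0\to B\Omega^{n,\log}_{\bar Y}\to Z\Omega^{n,\log}_{\bar Y}\xrightarrow{C}\Omega^{n,\log}_{\bar Y}\to 0
\]
(the second being Proposition \ref{CartierOnLog}, which gives $\ker C=B\Omega^{n,\log}$ and surjectivity of $C$) into the long exact cohomology sequences, using Theorem \ref{AcyclicityProposition} for $\Omega^{m,\log}_{\bar Y}$ and the inductive hypothesis for $m<n$, to obtain first $H^{\ge 1}(\bar Y,B\Omega^{n,\log}_{\bar Y})=0$ and then $H^{\ge 1}(\bar Y,Z\Omega^{n,\log}_{\bar Y})=0$.

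\textbf{Main obstacle.} The delicate point is this acyclicity, and within it the comparison of tame, strongly \'etale and Riemann-Zariski cohomology for $Z\Omega^{+,n}$: unlike $\Omega^{+,n}$, the sheaf of closed forms is the kernel of the non-$\mathcal O$-linear operator $d$, so it is only a module over $p$-th powers, and the quasi-coherent descent used for $\Omega^{+,n}$ must be reworked through the extension argument above and checked against the relevant base changes. A minor additional point is to verify that the residue-sequence identifications are compatible with the boundary maps of the excision triangle, so that the isomorphism $R^1i^!Z\Omega^{+,n}_{\mathcal X}\cong i_*Z\Omega^{+,n-1}_{\mathcal Z}$ is the canonical one; granting this, the induction on codimension in the case $r\ge 2$ is formal, exactly as in Corollary \ref{ValuesOfRi!Omega+}.
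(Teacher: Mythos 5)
Your proposal is correct in substance and shares the paper's skeleton (reduction to codimension one by factoring $Z\to Z'\to X$ locally, vanishing of $i^!$ via the $\lim$-description, and identification of $R^1i^!Z\Omega^{+,n}_{\mathcal X}$ with $Z\Omega^{+,n-1}_{\mathcal Z}$ through the residue sequence for closed forms of Corollary \ref{ResidueSequenceClosedForms}), but it diverges at the key step of killing $R^qi^!$ for $q\ge 2$. The paper does this purely formally on the adic space: it feeds the two short exact sequences $0\to B\Omega^{+,n+1}\to Z\Omega^{+,n+1}\xrightarrow{C}\Omega^{+,n+1}\to 0$ and $0\to Z\Omega^{+,n}\to\Omega^{+,n}\xrightarrow{d}B\Omega^{+,n+1}\to 0$ into the long exact sequence for $Ri^!$ and inducts on $n$, using only the already established concentration of $Ri^!\Omega^{+,m}$ in degree one (Theorem \ref{Ri!IsConcentratedInDegree1}); no new cohomology comparison for closed forms is needed. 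You instead re-run the whole acyclicity chain for $Z\Omega^{+,n}$ itself (analogues of Propositions \ref{StronglyEtaleEqualOpen} and \ref{TameEqualStronglyEtale}, Corollary \ref{CohomologyIsColimit}, and a $Z\Omega$-version of Theorem \ref{AcyclicityProposition} proved by the Cartier/de Rham d\'evissage on the blow-ups). That route can be made to work, and it buys a genuinely stronger intermediate statement (vanishing of the tame cohomology of $Z\Omega^{+,n}$ on log smooth affinoids), but the obstacle you single out is real and is exactly what the paper's argument sidesteps: $Z\Omega^n$ is not quasi-coherent on the (strongly) \'etale site over $\mathcal O$ --- for an \'etale map $A^+\to B^+$ one has $Z\Omega^n_{B^+}\neq Z\Omega^n_{A^+}\otimes_{A^+}B^+$ in general --- so the faithfully flat \v{C}ech argument of Proposition \ref{StronglyEtaleEqualOpen} must indeed be reworked, either via the Frobenius twist (viewing $F_*Z\Omega^n$, $F_*B\Omega^n$ as coherent $\mathcal O^{(p)}$-modules) or via your iterated-extension device, and in the latter case you still owe the \v{C}ech-level surjectivity checks that make the extensions pass to the complexes. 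Note that the same two exact sequences you use on the blow-ups can be applied one level up, directly to $Ri^!$, which collapses your ``main obstacle'' entirely; you may want to adopt that shortcut even if you keep your argument for $R^1i^!$, which is the same as the paper's.
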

\begin{proof}
	We first prove the claim when \underline{$r=1$}. \\
	\textbf{Step 1: Proof that $Ri^! Z\Omega^{+,n}$ and $Ri^!B\Omega^{+,n}$ are concentrated in degree $1$:} 
	
	This is done by induction on  the degree $n$. For degree $n=0$, $Z\Omega^{+,0}=(\mathcal{O}^{+})^p$ consists of sections of $\mathcal{O}^+$ which are $p$-powers. This is isomorphic to $\mathcal{O}^+$ by the Frobenius morphism. The claim follows for $Z\Omega^{+,0}$. Since $B\Omega^{+,0}=0$, the case $n=0$ is complete.
	
	Note that since $Z\Omega^{+,n}\cong Z\Omega^{n,\log}_{\lim}$ and $B\Omega^{+,n}=B\Omega^{n,\log}_{\lim}$, we have $i^!Z\Omega^{+,n}=i^!B\Omega^{+,n}=0$. This follows for any sheaf $\mathcal{F}_{\lim}$ when $\mathcal{F}$ is unramified. Indeed \cite[Lemma 8.8]{LogDiff} states that for $\mathcal{U}\subset \mathcal{U}'$, one has an inclusion
	\[
	\mathcal{F}_{\lim}(\mathcal{U}')\subset \mathcal{F}_{\lim}(\mathcal{U}).
	\]
	For a given tame morphism $\mathcal{U}\to \mathcal{X}$ the group of section $i^!\mathcal{F}_{\lim}(\mathcal{U})$ is the kernel of $\mathcal{F}_{\lim}(\mathcal{U}\backslash\mathcal{Z})\to \mathcal{F}_{\lim}(\mathcal{U})$, which is injective. Hence $i^!\mathcal{F}_{\lim}(\mathcal{U})=0$.

	 It is left to check by induction that $R^{j}i^!Z\Omega^{+,n}=R^ji^!B\Omega^{+,n}=0$ when $j\geq 2$.
	
	Assume the claim holds for $n\in \mathbb{N}$. Consider the short exact sequence in \cref{ShortExactOnC}
	\[
	0\to B\Omega^{+,n+1}\to Z\Omega^{+,n+1}\xrightarrow{C} \Omega^{+,n+1}\to 0,
	\]
	as well as the short exact sequence 
	\[
	0\to Z\Omega^{+,n}\to \Omega^{+,n}\to B\Omega^{+,n+1}\to 0.
	\]
	If we consider the long exact sequence associated to the latter we obtain  that $R^ji^! B\Omega^{+,n+1}$ is trivial when $j\geq 2$. Applying the long exact sequence of cohomology to the former implies that $R^ji^!Z\Omega^{+,n+1}$ is trivial when $j\geq 2$.\\	
	\textbf{Step 2: Computing $Ri^!Z\Omega^{+,n}$} 
	
	Since $Ri^!\Omega^{+,n+1}_{\mathcal{X}}$ is concentrated in degree one when $r=1$ we may apply the residue sequences  in \cref{ResidueSequenceClosedForms} together with Conjectures \ref{ConjResolution3} and \ref{GeneralResolution} to conclude that $Ri^!Z\Omega^{+,n+1}_{\mathcal{X}}=Z\Omega^{+,n+1}_{\mathcal{Z}}$ by a similar argument to the one used in \cref{ValueOfRi!OmegaCodim1}.
	
	When \underline{$r\geq 1$}, we find $Z\subset Z'\subset X$ locally,  with corresponding immersions $Z\xrightarrow{i_1} Z' \xrightarrow{i_2} X$ and $\textrm{codim}_{Z'}(X)=r-1$. We obtain
	\[
	\begin{split}
	 \phi_{\mZ/\mX}:	Ri^!Z\Omega^{+,n}_{\mathcal{X}}\cong & R(i_2 \circ i_1)^!Z\Omega^{+,n}_{\mathcal{X}}\cong  Ri_1^! \big(Ri_2^! \Omega_{\mathcal{X}}\big) \\ \cong & Ri_1^! Z\Omega^{+,n-r+1}_{\Spa(Z',k)}[-r+1]\cong Z\Omega^{+,n-r}_{\mathcal{Z}}[-r]. 
	\end{split}
	\]

Since $\phi_{\mathcal{Z}/\mathcal{X}}$ commutes with differentials in both instances, one also gets a diagram for the closed differentials
	\begin{equation}\label{DiagramOfPhisZ}
	\begin{tikzcd}
		Z\Omega_{\mathcal{Z}/k}^{+,s-r}\ar[r]\ar[d,swap,"\phi_{\mathcal{Z}/\mathcal{X}}"]&Z\Omega_{\mathcal{Z}/k}^{s-r}\ar[d,"\phi_{\mathcal{Z}/\mathcal{X}}"]\\ 
		R^ri^!Z\Omega_{\mathcal{X}/k}^{+,s}\ar[r]& R^ri^!Z\Omega_{\mathcal{X}/k}^s.
	\end{tikzcd}
\end{equation}
Therefore $\phi_{\mZ/\mX}$ is independent from the choice of $Z'$, hence may be defined globally.
\end{proof}

\subsection{Purity for the tame topology in positive characteristic}
\'Etale cohomology fails to satisfy purity for smooth pairs. Consider for example $i:\{0\}\to \mathbb{A}^1$. We have $\nu_{\mathbb{A}^1}(0)=\mathbb{Z}/p\mathbb{Z}$. Hence there is an exact sequence
\[
0 \to R^1 i^{!} \mathbb{Z}/p\mathbb{Z}\to R^1i^{!} \mathbb{G}^p_{a,\mathbb{A}^1}\xrightarrow{F^{-1}-1} R^1i^{!} \mathbb{G}_{a,\mathbb{A}^1}\to R^2i^{!} \mathbb{Z}/p\mathbb{Z}\to 0.
\]

Note that $R^1i^{!} \mathbb{G}_{a,\mathbb{A}^1}$ is locally isomorphic to $H^1_Z(X,\mathbb{G}_a)$, which is isomorphic to $\textrm{coker}(\mathcal{O}_{X,x}\to \mathcal{O}_{X,x}[t^{-1}])$ because of the excision sequence and the fact that \'etale cohomology of quasi-coherent sheaves is zero on affines.

But $F^{-1}-1$ is not surjective on $\mathcal{O}_{X,x}[t^{-1}]$. Indeed, for $x=0\in \mathbb{A}^1$, finding a preimage of $1/t$ corresponds to solving the equation $T^p-T=1/t$, or equivalently finding roots of the polynomial $f(T)=tT^p-tT-1$. But $f'(T)=-t$ which lies in $m_x$, thus $k[T]/(tT^p-tT-1)\to k[T]$ is not \'etale. In conclusion the preimage of $1/t$ under $F^{-1}-1$ does not exist in the \'etale topology.

The fact that $Z\Omega_{X,x}^{n}[t^{-1}]\xrightarrow{C-1} \Omega_{X,x}^n[t^{-1}]$ is not surjective seems to be an obstruction to purity. It is worth noting that the Cartier operator and $F^{-1}$ coincide when $n=0$. 

To remedy this, an idea is to replace the sheaf $\Omega$ with $\Omega^+$ and \'etale topology with tame topology. 

\begin{theorem}[c.f. \cref{MainTheorem}]\label{MainTheoremAtTheEnd}
Let $k$ be a perfect field of positive characteristic $p$. Let $(X,Z)$ be a smooth pair over $k$ of pure codimension $r$. Assume that Conjectures \ref{ConjResolution3} and \ref{GeneralResolution} hold. Define $\mathcal{X}:=\Spa(X,k)$ and $\mathcal{Z}:=\Spa(Z,k)$. Then we have an isomorphism in $D(\mathcal{Z}_t)$:
\[
Gys_{m}:\nu_{\mathcal{Z},m}(n-r)[-r]\to Ri^!\nu_{\mathcal{X},m}(n), \quad \forall m,n \in \mathbb{N}.
\]
\end{theorem}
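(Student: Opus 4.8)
The plan is to carry out the four-step argument outlined after Theorem \ref{MainTheorem}, the heart of which is the case $m=1$: apply the functor $Ri^!$ (taken, as usual now, in the tame topology) to the Cartier short exact sequence of Theorem \ref{C-1Shortexact} and compare the outcome with the corresponding sequence on $\mathcal{Z}$.

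Concretely, Theorem \ref{C-1Shortexact} (which relies on Conjecture \ref{Hypothesis_L}, guaranteed here by resolution of singularities) gives on $\mathcal{X}_{\tame}$ the short exact sequence
\[
0\to\nu_{\mathcal{X}}(n)\to Z\Omega^{+,n}_{\mathcal{X}}\xrightarrow{C-1}\Omega^{+,n}_{\mathcal{X}}\to 0,
\]
and the analogous one on $\mathcal{Z}_{\tame}$ with $(\mathcal{X},n)$ replaced by $(\mathcal{Z},n-r)$. Applying $Ri^!$ to the first produces a distinguished triangle
\[
Ri^!\nu_{\mathcal{X}}(n)\to Ri^! Z\Omega^{+,n}_{\mathcal{X}}\xrightarrow{C-1} Ri^!\Omega^{+,n}_{\mathcal{X}}\to Ri^!\nu_{\mathcal{X}}(n)[1].
\]
Corollary \ref{ValueOfRi!ZOmega} identifies the middle term with $Z\Omega^{+,n-r}_{\mathcal{Z}}[-r]$ and Corollary \ref{ValuesOfRi!Omega+} identifies the right-hand term with $\Omega^{+,n-r}_{\mathcal{Z}}[-r]$, both concentrated in cohomological degree $r$. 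Feeding this into the long exact sequence of cohomology sheaves, and using that $Z\Omega^{+,n-r}_{\mathcal{Z}}\xrightarrow{C-1}\Omega^{+,n-r}_{\mathcal{Z}}$ is surjective in the tame topology (the surjectivity result established just before Theorem \ref{C-1Shortexact}), one obtains $R^j i^!\nu_{\mathcal{X}}(n)=0$ for $j\neq r$ together with a canonical identification of $R^r i^!\nu_{\mathcal{X}}(n)$ with the kernel of $C-1$ on $\mathcal{Z}$, that is with $\nu_{\mathcal{Z}}(n-r)$. Hence $\nu_{\mathcal{Z}}(n-r)[-r]\xrightarrow{\sim}Ri^!\nu_{\mathcal{X}}(n)$ in $D(\mathcal{Z})$.

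The main obstacle is to verify that, under the isomorphisms of Corollaries \ref{ValuesOfRi!Omega+} and \ref{ValueOfRi!ZOmega}, the arrow $Ri^!(C-1)$ really is the genuine Cartier map $C-1$ of $\mathcal{Z}$, so that its kernel is $\nu_{\mathcal{Z}}(n-r)$ and not some other subsheaf. Since $Ri^!$ is additive it is enough to treat $Ri^!(C)$, and since both identifications are assembled from the residue sequences of Proposition \ref{ResidueSequences} and Corollary \ref{ResidueSequenceClosedForms}, the claim reduces to the compatibility of the residue maps with $C$. On a log smooth presentation $\Spa(U,\bar{U})$ with $D:=\bar{U}\setminus U$ and $D+\bar{Z}_U:=\bar{U}\setminus(U\setminus Z)$ simple normal crossing and $\bar{Z}_U=V(T_1)$ in suitable coordinates $T_1,\dots,T_m$, one checks that the residue map $\Omega^{n,\llog}_{\bar{U}}(\llog(D+\bar{Z}_U))\to\Omega^{n-1,\llog}_{\bar{Z}_U}(\llog D|_{\bar{Z}_U})$ intertwines the Cartier operators; this is a direct coordinate computation using $C(\dlog T_1)=\dlog T_1$, $C(f^{p-1}df)=df$ and multiplicativity of $C$, entirely parallel to the vanishing of the snake-lemma connecting map established in the proof of Corollary \ref{ResidueSequenceClosedForms}.

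Finally, the general $m$ follows by induction. Lemma \ref{highermSES} gives the short exact sequence $0\to\nu_{\mathcal{X},m}(n)\to\nu_{\mathcal{X},m+1}(n)\to\nu_{\mathcal{X},1}(n)\to 0$ and its analogue on $\mathcal{Z}$. Applying $Ri^!$ and using that $Ri^!\nu_{\mathcal{X},1}(n)$ (the case $m=1$) and $Ri^!\nu_{\mathcal{X},m}(n)$ (induction hypothesis) are concentrated in degree $r$, the long exact sequence shows $R^j i^!\nu_{\mathcal{X},m+1}(n)=0$ for $j\neq r$ and yields a morphism of short exact sequences
\[
\begin{tikzcd}
0\ar[r]&\nu_{\mathcal{Z},m}(n-r)\ar[r]\ar[d]&\nu_{\mathcal{Z},m+1}(n-r)\ar[r]\ar[d]&\nu_{\mathcal{Z},1}(n-r)\ar[r]\ar[d]&0\\
0\ar[r]&R^r i^!\nu_{\mathcal{X},m}(n)\ar[r]&R^r i^!\nu_{\mathcal{X},m+1}(n)\ar[r]&R^r i^!\nu_{\mathcal{X},1}(n)\ar[r]&0
\end{tikzcd}
\]
whose outer vertical maps are isomorphisms. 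The five lemma gives the middle one, which is exactly the asserted isomorphism $\nu_{\mathcal{Z},m+1}(n-r)[-r]\xrightarrow{\sim}Ri^!\nu_{\mathcal{X},m+1}(n)$, completing the induction and hence the proof.
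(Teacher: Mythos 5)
Your proposal is correct and follows essentially the same route as the paper: apply $Ri^!$ to the Cartier sequence of Theorem \ref{C-1Shortexact}, use Corollaries \ref{ValuesOfRi!Omega+} and \ref{ValueOfRi!ZOmega} to place everything in degree $r$, verify that under these identifications $Ri^!(C-1)$ is the Cartier map of $\mathcal{Z}$ by checking compatibility of $C$ with the residue/restriction maps on log smooth presentations (reducing, as the paper does, to the codimension-one case), and then pass to general $m$ by induction with Lemma \ref{highermSES}. The only cosmetic difference is that you kill $R^{r+1}i^!\nu_{\mathcal{X}}(n)$ by invoking the tame surjectivity of $C-1$ on $\mathcal{Z}$ directly, whereas the paper reads it off the commuting diagram (\ref{FinalDiagram}); these are the same argument once the compatibility is established.
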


\renewcommand\qedsymbol{$\overset{\ref{MainTheoremAtTheEnd} \textrm{ for } m=1}{\square}$}
\begin{proof}
	We start by assuming $m=1$. The short exact sequence
	\[
	0\to \nu_{\mathcal{X}}(n)\to Z\Omega^{+,n}_{\mathcal{X}}\xrightarrow{C-1}\Omega^{+,n}_\mathcal{X}\to 0
	\]
	induces an exact sequence 
	\[
	0\to R^ri^!\nu_{\mathcal{X}}(n)\to R^ri^!Z\Omega^{+,n}_{\mathcal{X}}\xrightarrow{C-1}R^ri^!\Omega^{+,n}_\mathcal{X}\to R^{r+1}i^!\nu_{\mathcal{X}}(n)\to 0
	\]
	since $Ri^!\Omega^{+,n}_\mathcal{X}$ and $Ri^!Z\Omega^{+,n}_\mathcal{X}$ are concentrated in degree $r$ as established in the previous section. It is enough to prove that the square on the right hand side of the diagram
	\begin{equation}\label{FinalDiagram}
		\begin{tikzcd}
			0\ar[r] &\nu_{\mathcal{Z}}(n-r)\ar[r]\arrow[dotted,d]&Z\Omega_{\mathcal{Z}}^{+,n-r}\ar[r,"C-1"]\ar[d,"\cong \;(\textrm{Corollary } \ref{ValueOfRi!ZOmega})"]&\Omega^{n-r}_{\mathcal{Z}}\ar[r]\arrow[d,"\cong \;(\textrm{Corollary } \ref{ValuesOfRi!Omega+})"] &0\\
				0\ar[r] &R^ri^!\nu_{\mathcal{X}}(n)\ar[r]&R^ri^!Z\Omega_{\mathcal{X}}^{+,n}\ar[r,"C-1"]&R^ri^!\Omega_{\mathcal{X}}^{+,n}\ar[r]&R^{r+1}i^!\nu_{\mathcal{X}}(n),
		\end{tikzcd}
	\end{equation}
	commutes. By induction on $r$, it suffices to prove this for $r=1$.
	
	As in the proof of \cref{ValueOfRi!OmegaCodim1} and \cref{ValueOfRi!ZOmega} it suffices to consider the commutativity at the  level of sections for an open $\mathcal{U}=\Spa(U,\bar{U})\subset \mathcal{X}$ satisfying that $\bar{U}$ is smooth over $k$ and $\bar{U}\backslash U$ as well as $Z\cup (\bar{U}\backslash U)=:\bar{Z}\cup D$ are snc divisors, where $\bar{Z}$ is the schematic image of $Z\cap U$ in $\bar{U}$. 
	
	The isomorphisms
	$R^1i^{!}\Omega^{+,n+1}_{\mathcal{X}}\to \Omega^{+,n} _{\mathcal{Z}}$ and $R^1i^!Z\Omega^{+,n+1}_{\mathcal{X}}\to Z\Omega^{+,n}_{\mathcal{Z}}$ correspond to the residue morphisms 
	\[
		\Omega^{n+1}_{\bar{U}}(\log (D+\bar{Z}))\to \Omega^{n}_{\bar{Z}}(\log D|_{\bar{Z}}) \; \textrm{ and } 	Z\Omega^{n+1}_{\bar{U}}(\log (D+\bar{Z}))\to Z\Omega^{n}_{\bar{Z}}(\log D|_{\bar{Z}}).
	\]
	It is therefore enough to check that the Cartier operator commutes with the restriction morphism.
	
	Assume that $\bar{Z}$ is the zero set associated to a section $f$ of $\mathcal{O}_{\bar{U}}$. The Cartier operator $C:Z\Omega_{\bar{U}}^{n+1}(\log(D+\bar{Z}))\to \Omega_{\Bar{U}}^{n+1}(\log(D+\bar{Z}))$ sends 
	\begin{equation}\label{FormOfEta}
		\eta=\gamma+\eta'\wedge d\log(f); \quad \gamma \in Z\Omega_{\bar{U}}^{n+1}(\log D) , \eta'\in \Omega_{\Bar{U}}^{n}(\log(D+\bar{Z}))
	\end{equation}
	 to $C(\eta)=C(\gamma)+C(\eta')\wedge d\log(f)$. By \cref{ResidueSequenceClosedForms} every $\eta\in Z\Omega_{\bar{U}}^{n+1}(\log(D+\bar{Z}))$ takes the form \eqref{FormOfEta}.

	  Since $\gamma$ is a section of $Z\Omega_{\bar{U}}^{n+1}(\log D) \subset Z\Omega_{\Bar{U}}^{n+1}(\log (D+\bar{Z}))$, its image $C(\gamma)$ is a section of $ \Omega_{\bar{U}}^{n+1}(\log D)$. Hence the image of $\eta$ and $C(\eta)$ by the residue map is $\eta'|_{\bar{Z}}$ and $C(\eta')|_{\bar{Z}}$ respectively.
	
	It is only left to check that the Cartier operator commutes with the restriction morphism to $\bar{Z}
$, i.e. $\eta'|_{\bar{Z}}=C(\eta')|_{\bar{Z}}$.	The inverse Cartier isomorphism
\[
\bar{C}^{-1}:\Omega_{\bar{U}}(\log (D+\bar{Z}))\to Z\Omega_{\bar{U}}(\log(D+\Bar{Z}_U))/B\Omega_{\Bar{U}}(\log(D+\bar{Z}))\]
sends elements of the form $hdg$ to $h^pg^{p-1}dg$, which is compatible with restriction to $\bar{Z}$.
Hence the Cartier operator $C$, which is the composition
\[
	Z\Omega_{\bar{U}}(\log(D+\bar{Z}))\to Z\Omega_{\bar{U}}(\log(D+\bar{Z}))/B\Omega_{\Bar{U}}(\log(D+\bar{Z})) \xrightarrow{\bar{C}}\Omega_{\bar{U}}(\log (D+\bar{Z})),
\]
commutes with the restriction to $\bar{Z}$. As wedge products commute with restriction and the Cartier operator, the claim follows. 
\end{proof}
\renewcommand\qedsymbol{$\square$}
The semi-purity isomorphism \cite[Proposition 2.1]{MilneValuesOfZeta}, induces by comparison between the \'etale cohomology of a scheme and of an adic space \cite[Lemma 8.3]{HTame} an isomorphism
\begin{equation}\label{PurityEtale}
	 \phi_{\mathcal{Z}/\mathcal{X}}:\nu_{\mathcal{Z},m}(n-r)\to R^ri^!_\et\nu_{\mathcal{X},m}(n).
\end{equation}
Let $j:\mathcal{U}:=\mathcal{X}\backslash \mathcal{Z}\to \mathcal{X}$ be the natural inclusion. When $r=1$, excision induces isomorphisms
\[
R^1i^!_\et\nu_{\mathcal{X},m}(n)\cong \big(j_*\nu_{\mathcal{U},m}(n)/\nu_{\mathcal{X},m}(n)\big)_\et \;\textrm{ and } \;
 R^1i^!_{t}\nu_{\mathcal{X},m}(n)\cong \big(j_*\nu_{\mathcal{U},m}(n)/\nu_{\mathcal{X},m}(n)\big)_{t},
\]
where the subscript $\et$ and $t$ indicates the topology in which $i^!$ and the quotient are taken. Consequently, $R^1i^!_\et\nu_{\mathcal{X},m}(n)$ is the \'etale sheafification of the tame sheaf $R^1i^!_{t}\nu_{\mathcal{X},m}(n)$. In order to make such a comparison, we work on the site $(\textrm{Ét}/\mathcal{X})_t$ instead of $\mathcal{X}_t$, whose objects are étale morphisms over $\mathcal{X}$, and coverings are tame coverings. This is possible due to \cite[Proposition 3.1]{MilneEtale}.
\begin{proposition}\label{CompatibilityGysin}
	Assume that we are in the situation of \cref{MainTheoremAtTheEnd}. Assume that $r=1$. The diagram 
	\[
	\begin{tikzcd}
		\nu_{\mathcal{Z},1}(n-1)\ar[swap,dr,"\Phi_{\mathcal{Z}/\mathcal{X}}"] \arrow[r,"Gys_1"]& R^1i^!_t\nu_{\mathcal{X},1}(n) \arrow[d,"\textrm{sheafification}"]\\
		& R^1i^!_\et \nu_{\mathcal{X},1}(n)
	\end{tikzcd}
	\]
	commutes. In particular, the sheafification morphism
	\[
	R^1i^!_{t}\nu_{\mathcal{X},1}(n) \to R^1i^!_\et \nu_{\mathcal{X},1}(n)
	\]
	is an isomorphism.
\end{proposition}
\begin{proof}
	Consider the diagram
	\[
	\begin{tikzcd}[row sep=2.5em]
		Z\Omega^{+,n-1}_{\mathcal{Z}}\arrow[rr] \arrow[dr,swap,"C-1"] \arrow[dd,swap,"\phi_{\mathcal{Z}/\mathcal{X}}"] &&
		Z\Omega^{n-1}_{\mathcal{Z}}\arrow[dd,swap,"\phi_{\mathcal{Z}/\mathcal{X}}" near start] \arrow[dr,"C-1"] \\
		&\Omega^{+,n-1}_{\mathcal{Z}}  \arrow[rr,crossing over] &&\Omega^{n-1}_{\mathcal{Z}}
		\arrow[dd,"\phi_{\mathcal{Z}/\mathcal{X}}"] \\
		R^1i^!Z\Omega^{+,n}_{\mathcal{X}} \arrow[rr] \arrow[dr,swap,"C-1"] && R^1i^!Z\Omega^{n}_{\mathcal{X}} \arrow[dr,swap,"C-1"] \\
		& R^1i^!\Omega^{+,n}_{\mathcal{X}}  \arrow[rr] \arrow[uu,<-,crossing over,"\phi_{\mathcal{Z}/\mathcal{X}}" near end]&& R^1i^!\Omega^{n}_{\mathcal{X}}.
	\end{tikzcd}
	\]
	Taking the kernels of all the morphisms denoted by $C-1$ delivers the commutative diagram
	\[
	\begin{tikzcd}
		\nu_{\mathcal{Z}}(n-r)\ar[r,"="]\ar[d,"Gys_1"]&\nu_{\mathcal{Z}}(n-r)\ar[d,"\phi_{\mathcal{Z}/\mathcal{X}}"]\\
		R^1i^!_{t}\nu_{\mathcal{X}}(n)\ar[r]&R^1i^!_{\et}\nu_{\mathcal{X}}(n),
	\end{tikzcd}
	\]
	since $i^!\Omega_{\mathcal{X}}^{+,n}$ and $i^!\Omega_{\mathcal{X}}^n$ are trivial.
\end{proof}
\renewcommand\qedsymbol{$\overset{\ref{MainTheoremAtTheEnd}}{\square}$}
\begin{proof}[Proof of \cref{MainTheoremAtTheEnd} for all $m$]
By applying $Ri^!$ on the short exact sequence 
	\[
0\to \nu_{\mathcal{X},m}(n)\to \nu_{\mathcal{X},m+1}(n)\to \nu_{\mathcal{X},1}(n)\to 0.
\]
defined in \cref{highermSES}, we conclude that $Ri^!\nu_{\mX,m}(n)$ is concentrated in degree $r$ for all $m,n\in \mN$. It is then left to find for each $m$ a morphism 
\[
Gys_m:\nu_{\mathcal{Z},m}(n-r)\to R^ri^!\nu_{\mathcal{X},m}(n),
\]
such that 
\[
\begin{tikzcd}
	0\ar[r]& \nu_{\mathcal{Z},m}(n-r)\ar[r]\ar[d,"Gys_m"]&\nu_{\mathcal{Z},m+1}(n-r)\ar[r]\ar[d,"Gys_{m+1}"] &\nu_{\mathcal{Z},1}(n-r)\ar[r]\ar[d,"Gys_1"] &0\\
	0\ar[r]&R^{r}i^!\nu_{\mathcal{X},m}(n)\ar[r]&R^{r}i^!\nu_{\mathcal{X},m+1}(n)\ar[r]&R^{r}i^!\nu_{\mathcal{X},1}(n)\ar[r]& 0,
\end{tikzcd}
\]
commutes.
We first assume that \underline{$r=1$}. By induction on $m$, there exists $Gys_m$ fitting in the following diagram
\[
\begin{tikzcd}
	0\ar[r]& \nu_{\mathcal{Z},m}(n-1)\ar[r]\ar[d,"Gys_m"]&\nu_{\mathcal{Z},m+1}(n-1)\ar[r]\ar[d,dotted,"Gys_{m+1}"] &\nu_{\mathcal{Z},1}(n-1)\ar[r]\ar[d,"Gys_1"] &0\\
	0\ar[r]&R^{1}i^!_{t}\nu_{\mathcal{X},m}(n)\ar[r]\ar[d,"\cong"]&R^{1}i^!_{t}\nu_{\mathcal{X},m+1}(n)\ar[r]\ar[d,"\cong"]&R^{1}i^!_{t}\nu_{\mathcal{X},1}(n)\ar[r]\ar[d,"\cong (\ref{CompatibilityGysin})"]& 0\\
	0\ar[r]&R^{1}i^!_{\et}\nu_{\mathcal{X},m}(n)\ar[r]&R^{1}i^!_{\et}\nu_{\mathcal{X},m+1}(n)\ar[r]&R^{1}i^!_{\et}\nu_{\mathcal{X},1}(n)\ar[r]& 0,\\	
\end{tikzcd}
\]
where the composition of each of the vertical sequences is $\phi_{\mathcal{Z}/\mathcal{X}}$. Notice that the lower short exact sequence is exact since it is isomorphic to the upper one. Furthermore, the lower vertical morphisms are the sheafification morphisms. These are bijective. Hence $Gys_{m+1}$ can be defined as the composition 
\[
\nu_{\mathcal{Z},m+1}(n-r) \xrightarrow{\phi_{\mathcal{Z}/\mathcal{X}}} R^{1}i^!_{\et}\nu_{\mathcal{X},m+1}(n) \xrightarrow{\textrm{sheafification}^{-1}} R^{1}i^!_{t}\nu_{\mathcal{X},m+1}(n).
\]
\textit{Proof for all $r$ and $m$:} We now assume that $Z$ has codimension $r>1$. Then, locally in the open topology, there exists a smooth closed subvariety $Y\subset X$ of codimension $r-1$ containing $Z$. Define $\mY:= \Spa(Y,k)$. The closed immersion $i:\mZ\to \mX$ can be factored as follows $$\mathcal{Z}\xrightarrow{i_1}\mathcal{Y}\xrightarrow{i_2}\mathcal{X}.$$
 Define $Gys_m$ on $\mathcal{X}$ as the composition of isomorphisms
\[
\nu_{\mathcal{Z},m}(n-r)[-r]\xrightarrow{Gys_m}Ri_{1,t}^!\nu_{\mathcal{Y},m}(n-r+1)[-r+1]\xrightarrow{Ri^!_{1,t}Gys_m} Ri^!_t\nu_{\mathcal{X},m}(n).
\]
In order to construct $Gys_m$ globally, we claim that $Gys_m$ is independent from the choice of $Y$. It is sufficient to prove that the diagram
\begin{equation}\label{HigherCompatibilityGysin}
\begin{tikzcd}
	\nu_{\mathcal{Z},m}(n)\ar[r,"\phi_{\mathcal{Z}/\mathcal{X}}"]\ar[d,"Gys_m"]&R^ri_\et^!\nu_{\mathcal{X},m}(n)\ar[d]\\
	R^ri_t^!\nu_{\mathcal{X},m}(n) \ar[r]& R^ri^!W_m\Omega_{\mathcal{X}}^n
\end{tikzcd}
\end{equation}   
commutes. Indeed, if this is the case, then $R^ri^!_t\nu_{\mathcal{X},m}(n)\to R^ri^!W_m\Omega_{\mathcal{X}}^n$ is an inclusion. As $\nu_{\mathcal{Z},m}(n)\xrightarrow{\Phi_{\mathcal{Z}/\mathcal{X}}} R^ri^!W_m\Omega_{\mathcal{X}}$ is also an inclusion, the morphism $\nu_{\mathcal{Z},m}(n)\xrightarrow{Gys_m}R^ri^!_t\nu_{\mathcal{X},m}(n)$ is unique and independent from the choice of $Y$.
		
 When $r=1$, this is true by the previous discussion. Assume that the claim is true for some $r$. The diagram \eqref{HigherCompatibilityGysin} for $r+1$ may be written as
\begin{equation*}
	\begin{tikzcd}
		\nu_{\mathcal{Z},m}(n-r-1)\ar[d]\ar[r]\ar[d,phantom,shift left=6ex,"\lcirc{{1}}"]&R^1i^!_{1,\et}\nu_{\mathcal{Y},m}(n-r)\ar[d,phantom,"\lcirc{2}"]\ar[r]&R^{r+1}i_\et^!\nu_{\mathcal{X},m}\ar[dd]\ar[dd,phantom,shift right=8ex,"\lcirc{{4}}"]\\
		R^1i_{1,t}^!\nu_{\mathcal{Y},m}(n-r)\ar[d,phantom,shift left=15ex,"\lcirc{{3}}"]\ar[r]\ar[ru]\ar[d]&R^1i_{1,t}^!R^ri_{2,\et}^!\nu_{\mathcal{X},m}(n)\ar[ru]\ar[rd]&\\
		R^{r+1}i^!_{t}\nu_{\mathcal{X},m}(n)\ar[rr]&&R^{r+1}i^!W_m\Omega^n_{\mathcal{X}},
	\end{tikzcd}
\end{equation*}
where $i$ is factored as $\mathcal{Z}\xrightarrow{i_1}\mathcal{Y}\xrightarrow{i_2}\mathcal{X}$ with $Y$ being of codimension $r$. The upper diagonal morphisms are \'etale sheafification morphisms. Each of the diagrams \lcirc{1} to \lcirc{4} commute:

\begin{itemize}
	\item	The diagram \lcirc{1} commutes by the codimension $1$ case. Notice here that $\mathcal{Z}$ has codimension $1$ in $\mathcal{Y}$.
	
	\item	The diagram \lcirc{{2}} commutes since sheafification is compatible with isomorphisms.
	\item The diagram and \lcirc{{3}} commutes by the induction step on $r$ and applying the functor $R^1i_{1,t}^!$.
	\item 	Lastly, the diagram \lcirc{{4}} commutes by the universal property of sheafification since the upper diagonal map is an \'etale sheafification map.
\end{itemize}    
As $Gys_m$ is independent from the choice of $\mathcal{Y}$, it glues to a global isomorphism.
\end{proof}
\begin{remark}
	In his proof of \cite[Proposition 2.1]{MilneValuesOfZeta}, Milne does not construct a Gysin morphism for $\nu_{m}(n)$ when $m>1$. The reference \cite{CohomologyLogGros} is more precise in that aspect.
\end{remark}
\renewcommand\qedsymbol{$\square$}
\begin{corollary}\label{MainResultCorollary}
	Under the same assumptions as in \cref{MainTheoremAtTheEnd}, we have
	\[
		H^l_{\mathcal{Z},t}(\mathcal{X},\nu_m(n))=H^{l-r}_t(\mathcal{Z},\nu_m(n-r)).
 \]
\end{corollary}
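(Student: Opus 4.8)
\emph{The plan} is to derive this from Theorem \ref{MainTheorem} (in its adic form, just proved) exactly as cohomological purity is turned into purity for cohomology with supports in Section~1 of the introduction. Recall that, by definition, $H^{\bullet}_{\mathcal{Z},t}(\mathcal{X},-)$ is the right derived functor of $\mathcal{F}\mapsto\Gamma(\mathcal{X},i^{!}\mathcal{F})$ on $\Sh(\mathcal{X}_{\tame})$. Since $i$ is a closed immersion, the pushforward $i_{*}\colon\Sh(\mathcal{Z}_{\tame})\to\Sh(\mathcal{X}_{\tame})$ is exact and identifies $\Sh(\mathcal{Z}_{\tame})$ with the full subcategory of sheaves on $\mathcal{X}_{\tame}$ supported on $\mathcal{Z}$; under this identification $i^{!}$ becomes the right adjoint of the exact functor $i_{*}$, hence preserves injectives (in particular sends them to $\Gamma$-acyclic sheaves), and $\Gamma(\mathcal{X},i_{*}(-))=\Gamma(\mathcal{Z},-)$. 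Therefore the two composites $\Gamma(\mathcal{X},-)\circ i^{!}$ and $\Gamma(\mathcal{Z},-)\circ i^{!}$ agree, and their Grothendieck spectral sequence reads
\[
E_{2}^{a,b}=H^{a}\!\big(\mathcal{Z},R^{b}i^{!}\mathcal{F}\big)\ \Longrightarrow\ H^{a+b}_{\mathcal{Z},t}(\mathcal{X},\mathcal{F}),
\]
equivalently $R\Gamma_{\mathcal{Z},t}(\mathcal{X},\mathcal{F})\cong R\Gamma(\mathcal{Z},Ri^{!}\mathcal{F})$ for every $\mathcal{F}$.

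First I would specialise to $\mathcal{F}=\nu_{\mathcal{X},m}(n)$. By Theorem \ref{MainTheorem} the complex $Ri^{!}\nu_{\mathcal{X},m}(n)$ is concentrated in degree $r$, with $R^{r}i^{!}\nu_{\mathcal{X},m}(n)\cong\nu_{\mathcal{Z},m}(n-r)$, so the spectral sequence degenerates: $E_{2}^{a,b}=0$ for $b\neq r$ and $E_{2}^{a,r}=H^{a}(\mathcal{Z},\nu_{\mathcal{Z},m}(n-r))$. Hence $H^{a+r}_{\mathcal{Z},t}(\mathcal{X},\nu_{\mathcal{X},m}(n))\cong H^{a}(\mathcal{Z},\nu_{\mathcal{Z},m}(n-r))$, and writing $t=a+r$ gives
\[
H^{t}_{\mathcal{Z},t}(\mathcal{X},\nu_{m}(n))\cong H^{t-r}(\mathcal{Z},\nu_{m}(n-r)),
\]
the right-hand side being tame cohomology of $\mathcal{Z}$; since $\nu_{m}(n-r)$ is $p$-torsion, it may equivalently be read as strongly \'etale cohomology by the $p$-torsion comparison results recalled in Chapter \ref{ChapterAdic}. (One may phrase the same step without spectral sequences, by simply applying $R\Gamma(\mathcal{Z},-)$ to the quasi-isomorphism $Ri^{!}\nu_{\mathcal{X},m}(n)\simeq\nu_{\mathcal{Z},m}(n-r)[-r]$ and using $R\Gamma(\mathcal{Z},Ri^{!}-)\cong R\Gamma_{\mathcal{Z},t}(\mathcal{X},-)$.)

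The argument is essentially formal, so I do not anticipate a genuine obstacle. The one point deserving a sentence of care is the identification $R\Gamma(\mathcal{Z},Ri^{!}-)\cong R\Gamma_{\mathcal{Z},t}(\mathcal{X},-)$ --- i.e.\ that the Grothendieck spectral sequence collapses to an equality of derived functors --- which rests precisely on $i^{!}$ preserving injectives for the closed immersion $i$, together with the fact (established in the course of proving Theorem \ref{MainTheorem}) that $Ri^{!}\nu_{\mathcal{X},m}(n)$ is concentrated in the single degree $r$. Everything else is bookkeeping with the long exact cohomology sequence already used there.
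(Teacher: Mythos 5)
Your proposal is correct and follows essentially the same route as the paper: the paper's proof is exactly the Grothendieck spectral sequence for $\Gamma\circ i^{!}$ applied to $\nu_m(n)$, degenerating because $Ri^{!}\nu_m(n)$ is concentrated in degree $r$ with value $\nu_m(n-r)$ by the main theorem. Your added remarks on why the spectral sequence exists ($i^{!}$ preserving injectives for the closed immersion) merely make explicit what the paper leaves implicit.
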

\begin{proof}
	We consider the spectral sequence
	\[
	R^l \Gamma (\mathcal{Z}, R^si^! \nu_m(n))\implies R^{s+l}\Gamma_{\mathcal{Z}}(\mathcal{X},\nu_m(n)).
	\]
	Since $Ri^! \nu_m(n)$ is concentrated in degree $r$ and $R^ri^!\nu_m(n)=\nu_{m}(n-r)[-r]$, we have 
	\[
	R^l \Gamma (\mathcal{Z},\nu_m(n-r)[-r]) \cong  R^{r+l}\Gamma_{\mathcal{Z}}(\mathcal{X},\nu_{m}(n-r))\qedhere
	\]
\end{proof}
By \cite[Section §14]{HSTame} we conclude:
\begin{corollary}\label{FinalSchemes}
	Under the same assumptions, we have
	\[
	H^t_{Z,t}(X,\nu_m(n))=H^{t-r}_t(Z,\nu_m(n-r)).
	\] 
\end{corollary}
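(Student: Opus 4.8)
The plan is to deduce Corollary \ref{FinalSchemes} formally from its adic counterpart, Corollary \ref{MainResultCorollary}, using the comparison between tame cohomology of a $k$-scheme of finite type and tame cohomology of the associated discretely ringed adic space, as set up in \cite[Section §14]{HSTame}. Concretely, for $Y$ of finite type over $k$ with $\mathcal{Y}:=\Spa(Y,k)$, loc. cit. provides a natural isomorphism $H^i_t(Y,\mathcal{G})\cong H^i_t(\mathcal{Y},\mathcal{G})$ for the relevant $p$-torsion tame sheaves $\mathcal{G}$, functorial in $Y$, and — what matters here — compatible with the recollement attached to a closed immersion. Since by our very definition the sheaf $\nu_m(n)$ on $\mathcal{X}=\Spa(X,k)$ is the one induced from $\nu_m(n)$ on $X$ (namely $\nu_m(n)(\Spa(V,T))=\nu_m(n)(V)$), this comparison applies verbatim to $\nu_m(n)$ on $X$, on $Z$, and on the open complement $U:=X\setminus Z$.

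The first step I would carry out is to record the excision long exact sequences on both sides: on $X$ one has the sequence relating $H^\bullet_t(X,-)$, $H^\bullet_t(U,-)$ and $H^\bullet_{Z,t}(X,-)$, and on $\mathcal{X}$ the analogous sequence with $\mathcal{U}:=\mathcal{X}\setminus\mathcal{Z}$. The functoriality of the $\Spa$ construction sends the open immersion $U\hookrightarrow X$ to $\mathcal{U}\hookrightarrow\mathcal{X}$ and the closed complement $Z$ to $\mathcal{Z}$, so the three comparison isomorphisms $H^\bullet_t(X,\nu_m(n))\cong H^\bullet_t(\mathcal{X},\nu_m(n))$, $H^\bullet_t(U,\nu_m(n))\cong H^\bullet_t(\mathcal{U},\nu_m(n))$ fit into a morphism of the two excision sequences. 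Applying the five lemma then yields $H^t_{Z,t}(X,\nu_m(n))\cong H^t_{\mathcal{Z},t}(\mathcal{X},\nu_m(n))$ for all $t$, and likewise (without even needing supports) $H^{t-r}(Z,\nu_m(n-r))\cong H^{t-r}(\mathcal{Z},\nu_m(n-r))$. Chaining these two identifications with the equality $H^t_{\mathcal{Z},t}(\mathcal{X},\nu_m(n))=H^{t-r}(\mathcal{Z},\nu_m(n-r))$ of Corollary \ref{MainResultCorollary} gives the claimed formula, and tracking the isomorphisms shows it is the natural one induced by the Gysin map of Theorem \ref{MainTheorem}.

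The main obstacle I anticipate is not conceptual but one of verifying compatibility: one must make sure that the comparison of \cite[Section §14]{HSTame} is genuinely a morphism of the full gluing data $(j_!,j^*,j_*;\,i^*,i_*,i^!)$ attached to $Z\hookrightarrow X\hookleftarrow U$ and its adic analogue — equivalently, that it carries the scheme-theoretic $i^!$ (hence $H^\bullet_{Z,t}(X,-)$) to the adic $i^!$ — and that it matches the tame sheaves $\nu_m(n)$ on the scheme and on the adic space under this identification. Once this compatibility is spelled out, the argument above is immediate; essentially all the genuine content has already been placed in Corollary \ref{MainResultCorollary}.
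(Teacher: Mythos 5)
Your proposal is correct and is essentially the paper's own argument: the paper deduces Corollary \ref{FinalSchemes} from Corollary \ref{MainResultCorollary} precisely by invoking the scheme--adic comparison of \cite[Section §14]{HSTame}, which is exactly your route. You merely make explicit the excision/five-lemma bookkeeping and the compatibility with supports that the paper leaves implicit in that citation.
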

\begin{remark}
	\cref{MainResultCorollary} was proven in \cite{HTame} in the case where $n=0$ (Corollary 14.5 loc. cit.). 
	
	A proof when $\dim(X)=1$ is given in \cite[Theorem 1.2.]{KatharinaPurity}.	 
	
	Merici  applies \cref{MainResultCorollary} in his article \cite{Merici} in order to prove that tame cohomology is motivic.
\end{remark}

\subsection{Purity for regular pairs}
In this section, we prove cohomological purity for log de Rham-Witt sheaves in the tame topology for a regular pair over a base $S$ of characteristic $p$, again under the assumption of resolution of singularities. In our context, a pair of schemes $(X,Z)$ is called regular, if $Z$ is a closed subscheme of $X$, and $X$ and $Z$ are regular. 

 Note that the immersion $i:Z\to X$ is part of the structure, we choose however to omit it  in the notation. 
 
As usual, we say that $(X,Z)$ is of pure codimension $r$ if $Z$ is of pure codimension $r$ in $X$.

 This part  is inspired by the work of Shiho \cite{Shiho}, who generalized the result of Milne \cite{MilneValuesOfZeta} and Gros \cite{CohomologyLogGros} in the étale cohomology.
\begin{theorem}\label{regularCase}
	Let $i: Z\to X$ be a closed immersion of  schemes over a base scheme $S$ of characteristic $p$. Assume that $(X,Z)$ is a regular pair of pure codimension $r$. Then  in $D((Z/S)_t)$ we have the isomorphism 
	\[
	Ri^!\nu_{X,m}(n)\cong \nu_{Z,m}(n-r)[-r]
	\]
	under the assumption of resolution of singularities.
\end{theorem}
 \begin{remark}
	By applying \cref{rmk:tamevsstronglyet}, we also obtain the same purity result in the strongly \'etale topology.
 \end{remark}
 A key ingredient in the proof is the following result due to Popescu 
 \begin{proposition}[Popescu's Theorem {\cite[Theorem 07GC]{Stacks}}]\label{PopescuTheorem}
 	Let $B$ be a Noetherian ring and let $A$ be a regular $B$-algebra. Then $A$ is a filtered colimit of smooth $B$-algebras.
 \end{proposition}
 
An equivalent formulation is that for any diagram
 \begin{equation}
 	\begin{tikzcd}
 		B\ar[r]\ar[d]& A.\\
 		C\ar[ur]&\\
 	\end{tikzcd}
 \end{equation} 
with $C$ is a finitely generated $B$-algebra, there exists a smooth $B$-algebra $D$, fitting into a commutative diagram:
 \begin{equation}
 	\begin{tikzcd}
 		B\ar[r]\ar[d]& A\\
 		C\ar[ur]\ar[r]&D.\ar[u]\\
 	\end{tikzcd}
 \end{equation}
 
Assume now that the schemes $X$, $Z$ and $S$ are the affine schemes $\Spec(A)$, $\Spec(A/I)$ and $\Spec(R)$. Furthermore, since $Z\to X$ is regular, we may assume that $I$ is defined by a regular sequence $t_1,\dots,t_r$. 
 
 \begin{claim}\label{filteringDescriptionOfXZS}
 	Assume furthermore that $X$ is local. The chain of morphisms $Z\to X\to S$ can be expressed as a filtered limit of morphisms of affine schemes
 	\[
 	Z_i:=\Spec (A_i/I_i)\to X_i:=\Spec(A_i)\to S_i:=\Spec(R_i)
 	\]
 	where each $A_i$ and $R_i$ is of finite type over $\mF_p$, and $A_i$ and $A_i/I_i$ are smooth over $\mF_p$. 
 \end{claim}
 \begin{proof}
 	Observe first that the morphism $R\to A$ is the filtered colimit of morphisms $(R_i\to C_i)_{i\in \mathcal{I}}$, where $R_i$ and $C_i$ are of finite type over $\mF_p$. By \cref{PopescuTheorem}, find a smooth $\mF_p$ algebra $A_i$ such that the diagram
 	\[
 	\begin{tikzcd}
 		R\ar[r]&A&\\
 		R_i\ar[u]\ar[r]&C_i\ar[u]\ar[r]&A_i\ar[ul]
 	\end{tikzcd}
 	\] 
 	commutes. Thus $R\to A$ is the filtered colimit of $(R_i\to A_i)_{i\in \mathcal{I}}$. 
 	
 	In order to find adequate $I_i$'s, we first need a Lemma
 	\begin{lemma}[{\cite[Lemma C.7.]{DegliseRationalMotivic}}] \label{lem:FromRegularToSmoothGenerators}
 		Let $B\to A$ be a local homomorphism of regular local rings. Let $J\subsetneq B$ be a proper  ideal such that $A/JA$ is regular. Set $n=\dim A -\dim A/JA$. If there exists elements $f_1,\dots, f_n\in B$ generating $J$, then $f_i$ form a subset of regular parameters of $B$, and the ring $B/J$ is regular. 
 	\end{lemma}
 	\begin{proof}[Proof of the Lemma]
 		Let $\nn$ and $\mm$ be the respective maximal ideals of $B$ and $A$. Let $g_1,\dots,g_n$ be the images of $f_1,\dots,f_n$ along the morphism $B\to A$. Since $A/JA$ is regular,  $g_1,\dots,g_n$ can be extended to a regular sequence generating $\mm$ according to \cite[Lemma 00NR]{Stacks}.  This implies that $g_1,\dots,g_n$ are $A/\mm$-linearly independent elements of $\mm/\mm^2$, due to Nakayama's lemma \cite[Lemma 00DV]{Stacks}. Thus the $f_i$'S are $B/\nn$-linearly independent elements of $\nn/\nn^2$. By applying Nakayama's lemma again, we conclude the claim.
 	\end{proof}
 	 The ideal $I\subseteq A$ is defined by the regular sequence $t_1,\dots,t_r$, which extends to a regular sequence $(t_1,\dots,t_n)$, where $n=\dim X$.  By restricting the index set $\mathcal{I}$ and replacing each $A_i$ by a localization, we may lift, according to \cref{lem:FromRegularToSmoothGenerators}, the regular sequence $t_1,\dots, t_n$ to compatible regular sequences $t_{1,i},\dots,t_{n,i}$, for which each $A_i/(t_{1,i},\dots,t_{r,i})$ is smooth over $\mF_p$. Set $I_i:=(t_{1,i},\dots,t_{r,i})$, yielding the desired presentation.
 \end{proof} 
 
\begin{remark}\label{filteringRemark}
	\begin{itemize}
		\item  In \cref{filteringDescriptionOfXZS} one may drop the condition that $X$ is local. This has been done in \cite[Proposition C.8.]{DegliseRationalMotivic}. 
		\item \cref{filteringDescriptionOfXZS} implies that the closed immersion $i:\mZ:=\Spa(Z,S)\hookrightarrow \Spa(X,S)=:\mX$ may also be written as a limit of closed immersions $\mZ_i:=\Spa(Z_i,S_i)\hookrightarrow \Spa(X_i,S_i)=:\mX_i$, under the assumption that $X$ is local. Applying \cite[Corollary 5.4]{HTame} and excision we conclude that 
		\begin{equation}
			\colim_{i\in \mathcal{I}}H^s_{t,\mZ_i}(\mX_i,\nu_m(n))=H^s_{t,\mZ}(\mX,\nu_m(n)), \quad 
			\textrm{ for all } s\geq 0.
		\end{equation}
	\end{itemize}

	Since each $\mX_i=\Spa(X_i,S_i)$ and $\mZ_i=\Spa(Z_i,S_i)$ are open subspaces \break of  $\Spa(X_i,\mF_p)$ and $\Spa(Z_i,\mF_p)$, respectively, we may apply \cref{MainTheoremAtTheEnd} to each pair $(X_i,Z_i)$.
\end{remark}

We now remove the assumption that $X$, $Z$ and $S$ are affine. In order to prove \cref{regularCase}, we have to define a morphism 
\[
\Phi_{\mZ/\mX}: \nu_{\mZ,m}(n-r)[-r]\to Ri^! \nu_{\mX,m}(n)
\]
and verify that it is an isomorphism. 

\begin{proposition}
	Under the assumptions of \cref{regularCase}, $Ri^!\nu_{\mX,m}(n)$ is concentrated in degree $r$.
\end{proposition} 
\begin{proof}
	The claim is local on the tame site of $\mX$. We can thus assume that $\mX=\Spa(A,A^+)$ is a tamely henselian adic space. By assumption, a regular ideal $I\subseteq A$ exists, such that $\mZ=\Spa(A/I,A^+)=\Spa(A/I,A^+/I)$. According to \cref{filteringRemark}, we may describe $\Spa(A,A^+)$ and $\Spa(A/I,A^+)$ as filtered limits of adic spaces $\Spa(A_i,R_i)$ and $\Spa(A_i/I_i,R_i)$, where $A_i$ and $R_i$ are of finite type over $\mF_p$, and $A_i$  and $A_i/I_i$ are smooth $\mF_p$-algebras. 
	
	For $s\neq r$, the sheaf $R^si^!\nu_{m}(n)$ corresponds to the cohomology group $H^s_{t,\mZ}(\mX,\nu_{m}(n))$. By \cref{filteringRemark}, the latter is isomorphic to 
	\begin{equation}\label{CohomologyColimit}
			\colim_i H^s_{t,\Spa(A_i/I_i,R_i)}(\Spa(A_i,R_i),\nu_m(n))=\colim_i H^{s-r}_{t}(\Spa(A_i/I_i,R_i),\nu_m(n-r))
	\end{equation}
	By \cite[Corollary 5.4]{HTame}, \eqref{CohomologyColimit} is isomorphic to $H^{s-r}_t(\Spa(A/I,A^+),\nu_m(n-r))$, which is trivial, since $\Spa(A/I,A^+)=\Spa(A/I,A^+/I)$ is tamely henselian.
\end{proof}

\begin{proof}[Proof of \cref{regularCase}]

It is only left to construct an isomorphism  of sheaves in $D((Z/S)_t)$.
\[
\Phi_{\mZ/\mX}: \nu_{\mZ,m}(n-r)[-r]\to R^ri^! \nu_{\mX,m}(n).
\]
We use the following construction from  \cite[Section 3]{Shiho} on $W_m\Omega_X^n$.
\begin{DefLem}\label{DefLemAboutWOmega}
	For each $n>0$ There exists a morphism 
\[
		\Phi_{Z/X}: W_m\Omega_Z^{n-r}\to R^ri^!W_m\Omega_X^n.
\]
	Locally, when $Z$ and $X$ are the colimit of affine schemes $Z_i$ and $X_i$ which are smooth over $\mF_p$, the morphism $\Phi_{Z/X}$ is the colimit of the morphisms
	\[
	\Phi_{Z_i/X_i}: W_m\Omega_{Z_i}^{n-r}\to R^ri^!W_m\Omega_{X_i}^{n}
	\]
	defined by Gros in \cite[Subsection 3.4]{CohomologyLogGros}.
\end{DefLem}

By \cite[Lemma 8.3]{HTame}, we obtain a morphism
\begin{equation}
	\Phi_{\mZ/\mX}: W_m\Omega_{\mZ}^{n-r}\to R^ri^!W_m\Omega_{\mX}^n.
\end{equation}

We have established at the end of the proof of \cref{MainTheoremAtTheEnd} that the natural morphism $R^ri^!\nu_{\mX_i,m}(n)\to R^ri^!W_m\Omega_{\mX_i}^n$ is injective.  By \cref{DefLemAboutWOmega}, we conclude that $R^ri^!\nu_{\mX,m}(n)\to R^ri^!W_m\Omega_{\mX}^n$ is also injective: this is checked on local Huber pairs, where the morphism is a filtered colimit of injective maps.

It suffices to check that the composition of morphisms
\[
\nu_m(n-r)\hookrightarrow W_m\Omega_Z^{n-r}\xrightarrow{\Phi_{\mZ/\mX}} R^ri^!W_m\Omega_Z^{n}.
\]
factors through $R^ri^!\nu_m(n)$ and that $\Phi_{\mZ/\mX}:\nu_m(n-r)\to R^ri^!\nu_m(n)$ is an isomorphism.  This claim may be checked on local adic spaces. By expressing $\mX$ and $\mZ$ as colimits of smooth adic spaces as in \cref{filteringRemark}, the claim follows by \cref{MainTheoremAtTheEnd}.
\end{proof}
\begin{remark}
	In \cite[Theorem 3.2]{Shiho}, Shiho required $X$ to be part of \emph{the category $\mathcal{C}$} whose objects are excellent schemes $X$ satisfying $[\kappa(x):\kappa(x)^p]<\infty$ for all $x\in X$. These conditions are not required in our context, but are necessary in the étale cohomology case  \cite[Remark 3.10]{Shiho}.
\end{remark}

\appendix
\section{Compactifications}\label{sec:Compactifications}
Let $\mathcal{X} := \Spa(X, S)$ be a discretely ringed adic space, where $X$ and $S$ are qcqs schemes, and $X \to S$ is separated. In our applications, we often seek an étale cover of $\mathcal{X}$ by affinoids of the form $\Spa(U, \bar{U})$, where $U \subset \bar{U}$ is an open immersion. Such opens form a basis for the topology on $\mathcal{X}$. This construction relies on Nagata's compactification theorem and its refinement by Conrad \cite{Conrad}. This is the subject of \cref{sec:Compactifications}.

Our goal is to replace pairs $(U, \bar{U})$ by log regular log schemes, since logarithmic differentials admit a simpler and more explicit description in this context (see \cite{LogDiff}). However, this approach runs into the classical problem of resolution of singularities, which remains open in dimension $>3$ in positive characteristic.

We recall the following version of Nagata's compactification theorem:

\begin{proposition}[{\cite[Theorem 4.3.]{Conrad}}]\label{NagataCompactification}
	Let $S$ be a qcqs scheme, and let $X$ be a separated $S$-scheme of finite type. Then there exists an open immersion $j: X \hookrightarrow \bar{X}$ over $S$ such that $\bar{X} \to S$ is proper. We call such an immersion a \emph{compactification} of $X$ over $S$.
\end{proposition}

This implies that the topology on a discretely ringed adic space $\mathcal{X} = \Spa(X, S)$, where $X \to S$ is separated and of finite type, is generated by affinoids $\Spa(U, \bar{U})$, where $U \subset \bar{U}$ is an open immersion. After replacing $\bar{U}$ by a blow-up along $\bar{U} \setminus U$, we may assume that the morphism $U \to \bar{U}$ is affine. Indeed, the complement $Z := \bar{U} \setminus U$ is a Cartier divisor, and one can cover $\bar{U}$ by affine opens $\Spec(A)$ where $Z \cap \Spec(A) = V(f)$ for some $f \in A$, so $U \cap \Spec(A) = \Spec(A[f^{-1}])$.

Given a separated morphism $X \to S$ of qcqs schemes, the category of compactifications forms a filtered system. In particular, if $X \hookrightarrow \bar{X}_1$ and $X \hookrightarrow \bar{X}_2$ are two compactifications, then the induced map $X \to \bar{X}_1 \times_S \bar{X}_2$ is also a compactification.

\begin{proposition}[{\cite[Theorem 2.11]{Conrad}}]\label{BlowupsDomModif}
	Let $f: X' \to X$ be a proper morphism of qcqs schemes that restricts to an isomorphism on a quasi-compact dense open subscheme $U \subseteq X, X'$. Then there exist $U$-admissible blow-ups $\bar{X} \to X$ and $\bar{X}' \to X'$ and an isomorphism $\bar{X}' \cong \bar{X}$ over $f$.
\end{proposition}
In particular, the category of compactifications is dominated by those in which all morphisms are blow-ups.
\section{Resolutions}

Let $X \to S$ be a smooth separated morphism of finite type, and assume $S = \Spec(k)$ for a perfect field $k$ of characteristic $p$. Then $X$ is quasi-excellent. We would like to find a smooth compactification $\bar{X}$ of $X$ such that $\bar{X} \setminus X$ is an snc divisor. Ideally, we also want every such compactification to be dominated by a sequence of blow-ups in smooth centers lying within the boundary. This allows us to apply  \cref{AcyclicityProposition} to compute cohomology of logarithmic differentials. These goals remain conjectural in dimensions $>3$.

\begin{conjecture}\label{ConjResolution3}
	Let $\bar{X}$ be a reduced, separated, Noetherian, quasi-excellent scheme. Then there exists a proper birational morphism $\pi: \bar{X}' \to \bar{X}$ such that:
	\begin{itemize}
		\item $\bar{X}'$ is regular;
		\item $\pi$ is an isomorphism over the regular locus of $\bar{X}$;
		\item The exceptional divisor $\pi^{-1}(\bar{X}_{\mathrm{sing}})$ is an snc divisor in $\bar{X}'$.
	\end{itemize}
\end{conjecture}

\cref{ConjResolution3} holds when $\bar{X}$ is of dimension $\leq 3$ \cite{Resolution3}. Now back to our situation, where $X\to S$ is separated and smooth and assume that $S=\Spec(k)$, where $k$ is a perfect field of characteristic $p$. Then $X$ is quasi-excellent. Under the assumption of \cref{ConjResolution3}, one can find a smooth compactification $\bar{X}$ of $X$ such that $\bar{X} \setminus X$ is an snc divisor, and hence $(X, \bar{X})$ is a log smooth pair.

\begin{conjecture}[Embedded resolution]\label{ConjResolution2}
	Let $Z$ be a reduced, excellent, Noetherian scheme, and $X$ a regular excellent scheme. Suppose $Z \hookrightarrow X$ is a closed immersion. Then there exists a projective, surjective morphism $\pi: X' \to X$, which is an isomorphism over $X \setminus Z$, such that $\pi^{-1}(Z)$ is an snc divisor in $X'$.
\end{conjecture}

 This is known when $\dim(Z) \leq 2$ \cite[Corollary 1.5]{Resolution2}. Together, Conjectures \ref{ConjResolution3} and \ref{ConjResolution2} imply that every smooth discretely ringed adic space $\Spa(X, S)$ over a perfect field $k$ of characteristic $p$ has a cover by affinoids $\Spa(U, \bar{U})$ such that $\bar{U}$ is smooth and $\bar{U} \setminus U$ is an snc divisor.
 This holds up to dimension $3$ since \cref{ConjResolution2} holds up to dimension $2$ and \cref{ConjResolution3} holds up to dimension $3$.

We present a stronger version of the conjecture, which implies the Conjecture of embedded resolution when $X$ is a defined over a perfect field $k$.
\begin{conjecture}[Embedded resolution of singularities with boundary]\label{GeneralResolution}
	Consider a smooth scheme $\bar{X}$ over a perfect field $k$. Let $D\subset \bar{X}$ be an snc divisor. Let $\bar{Z}\subset \bar{X}$ be a reduced closed subscheme, which is smooth over $\bar{X}\backslash D$, s.t. $\bar{Z}\backslash D$ is smooth over $k$. Then the triple $(\bar{X},D,\bar{Z})$ admits a resolution $\pi:\bar{Y}\to \bar{X}$ by blowups in smooth centers
	\[
	\bar{Y}:=\bar{X}_m\to \bar{X}_{m-1}\to \dots \to \bar{X}_2 \to \bar{X}_1:=\bar{X},
	\]
	satisfying the following conditions
	\begin{itemize}
		\item Each morphism $\bar{X}_{i+1}\xrightarrow{\pi_i} \bar{X}_{i}$ is a blowup in a smooth center $W_i$ with exceptional divisor $E_{i+1}$. Set $D_1:=D$, and let $D_{i+1}$ be the strict transform of $D_i$ in $\bar{X}_{i+1}$.
		\item $D_{i}$ is an snc divisor and $W_i \subset D_i$.
		\item $\pi^{-1}(\bar{Z})$ is a smooth $k$-scheme.
	\end{itemize}
\end{conjecture}
\cref{GeneralResolution} is proven when $\bar{Z}$ has dimension $\leq 2$ \cite[Theorem 6.9]{Resolution2}. 

\begin{remark}\label{DescriptionOfOpens}
	Assuming Conjectures \ref{ConjResolution3} and \ref{ConjResolution2}, we conclude that opens of the form $\Spa(U, \bar{U})$, where $(U, \bar{U})$ is a log smooth pair, form a basis for the topology on $\Spa(X, S)$. By further covering $\bar{U}$ by affine opens, we may assume that these are affinoids. Assuming \cref{GeneralResolution}, any $U$-modification of $\bar{U}$ is dominated by a sequence of blow-ups in smooth centers.
	
	Indeed, modifications are dominated by blow-ups \cite[Corollary 3.4.8]{TemkinRZ}. Moreover, in our setting, $U$-blow-ups and $U$-admissible blow-ups coincide \cite[Remark 3.4.5(i)]{TemkinRZ}. Applying \cref{GeneralResolution} to the center $Z \subset \bar{U} \setminus U$ allows us to resolve $Z$ within the boundary divisor, ultimately yielding a smooth blow-up dominating any given $U$-modification.
\end{remark}

\printbibliography

@article{KSTVoerst, title={Towards Vorst’s conjecture in positive characteristic}, volume={157}, DOI={10.1112/S0010437X21007120}, number={6}, journal={Compositio Mathematica}, author={Kerz, Moritz and Strunk, Florian and Tamme, Georg}, year={2021}, pages={1143–1171}}

@book{Ogus, place={Cambridge}, series={Cambridge Studies in Advanced Mathematics}, title={Lectures on Logarithmic Algebraic Geometry}, DOI={10.1017/9781316941614}, publisher={Cambridge University Press}, author={Ogus, Arthur}, year={2018}, collection={Cambridge Studies in Advanced Mathematics}}

@misc{Stacks,
	shorthand    = {Stacks},
	author       = {The {Stacks Project Authors}},
	title        = {\textit{Stacks Project}},
	howpublished = {https://stacks.math.columbia.edu},
	year         = {2018},
}

@book{Esnault,
	title={Lectures on vanishing theorems},
	author={Esnault, H{\'e}l{\`e}ne and Viehweg, Eckart},
	volume={20},
	year={1992},
	publisher={Springer}
}

@article{Conrad,
	title={Deligne's notes on Nagata compactifications},
	author={Conrad, Brian},
	journal={Journal-Ramanujan Mathematical Society},
	volume={22},
	number={3},
	pages={205},
	year={2007},
	publisher={THE RAMANUJAN MATHEMATICAL SOCIETY}
}

@misc{LogDiff,
  title={Logarithmic differentials on discretely ringed adic spaces}, 
author={Katharina Hübner},
year={2024},
eprint={2009.14128},
archivePrefix={arXiv},
primaryClass={math.AG},
url={https://arxiv.org/abs/2009.14128}, 
}

@InProceedings{TemkinMetrization,
	author={Michael Temkin},
	editor={Baker, Matthew
	and Payne, Sam},
	title={Metrization of Differential Pluriforms on Berkovich Analytic Spaces},
	booktitle={Nonarchimedean and Tropical Geometry},
	year={2016},
	publisher={Springer International Publishing},
	address={Cham},
	pages={195--285},
	abstract={We introduce a general notion of a seminorm on sheaves of rings or modules and provide each sheaf of relative differential pluriforms on a Berkovich k-analytic space with a natural seminorm, called K{\"a}hler seminorm. If the residue field k̃{\$}{\$}{\backslash}tilde{\{}k{\}}{\$}{\$}is of characteristic zero and X is a quasi-smooth k-analytic space, then we show that the maximality locus of any global pluricanonical form is a PL subspace of X contained in the skeleton of any semistable formal model of X. This extends a result of Musta{\c{T}}{\u{a}} and Nicaise, because the K{\"a}hler seminorm on pluricanonical forms coincides with the weight norm defined by Musta{\c{T}}{\u{a}} and Nicaise when k is discretely valued and of residue characteristic zero.},
	isbn={978-3-319-30945-3}
}

@article{MilneDuality,
	author = {Milne, J. S.},
	journal = {Annales scientifiques de l'École Normale Supérieure},
	language = {eng},
	number = {2},
	pages = {171-201},
	publisher = {Elsevier},
	title = {Duality in the flat cohomology of a surface},
	url = {http://eudml.org/doc/81978},
	volume = {9},
	year = {1976},
}

@article{sato2007logarithmic,
	title={Logarithmic Hodge--Witt sheaves on normal crossing varieties},
	author={Sato, Kanetomo},
	journal={Mathematische Zeitschrift},
	volume={257},
	pages={707--743},
	year={2007},
	publisher={Springer}
}

@article{HSTame,
	title={The tame site of a scheme},
	author={Hübner, Katharina and Schmidt, Alexander},
	journal={Inventiones mathematicae},
	volume={223},
	pages={379--443},
	year={2021},
	publisher={Springer}
}

@article{HTame,
	title={The adic tame site},
	author={Hübner, Katharina},
	journal={Documenta Mathematica},
	volume={26},
	pages={873--945},
	year={2021}
}

@article{TemkinRZ,
	title={Relative Riemann-Zariski spaces},
	volume={185},
	ISSN={1565-8511},
	urladdr={http://dx.doi.org/10.1007/s11856-011-0099-0},
	DOI={10.1007/s11856-011-0099-0},
	number={1},
	journal={Israel Journal of Mathematics},
	publisher={Springer Science and Business Media LLC},
	author={Temkin, Michael},
	year={2011},
	month=sep, pages={1–42} }

@article{Resolution3,
	title = {Resolution of singularities of arithmetical threefolds},
	journal = {Journal of Algebra},
	volume = {529},
	pages = {268-535},
	year = {2019},
	issn = {0021-8693},
	doi = {https://doi.org/10.1016/j.jalgebra.2019.02.017},
	url = {https://www.sciencedirect.com/science/article/pii/S0021869319301061},
	author = {Vincent Cossart and Olivier Piltant},
	keywords = {Resolution of singularities, Arithmetical varieties, Zariski, Blowing up, Valuations},
	abstract = {We prove Grothendieck's conjecture on Resolution of Singularities for quasi-excellent schemes X of dimension three and of arbitrary characteristic. This applies in particular to X=SpecA, A a reduced complete Noetherian local ring of dimension three and to algebraic or arithmetical varieties of dimension three. Similarly, if F is a number field, a complete discretely valued field or more generally the quotient field of any excellent Dedekind domain O, any regular projective surface X/F has a proper and flat model X over O which is everywhere regular.}
}

@book{Resolution2,
title={Desingularization: Invariants and strategy},
author={Cossart, Vincent and Jannsen, Uwe and Saito, Shuji},
volume={2270},
year={2020},
publisher={Springer}
}

@book{HuberEtale,
	title={{\'E}tale cohomology of rigid analytic varieties and adic spaces},
	author={Huber, Roland},
	volume={30},
	year={2013},
	publisher={Springer}
}

@article{MilneValuesOfZeta,
	ISSN = {00029327, 10806377},
	URL = {http://www.jstor.org/stable/2374676},
	author = {J. S. Milne},
	journal = {American Journal of Mathematics},
	number = {2},
	pages = {297--360},
	publisher = {Johns Hopkins University Press},
	title = {Values of Zeta Functions of Varieties Over Finite Fields},
	urldate = {2024-07-10},
	volume = {108},
	year = {1986}
}

@InCollection{FujiwaraPurity,
	Author = {Fujiwara, Kazuhiro},
	Title = {A proof of the absolute purity conjecture (after {Gabber}).},
	BookTitle = {Algebraic geometry 2000, Azumino. Proceedings of the symposium, Nagano, Japan, July 20--30, 2000},
	ISBN = {4-931469-20-5},
	Pages = {153--183},
	Year = {2002},
	Publisher = {Tokyo: Mathematical Society of Japan},
	Language = {English},
	Keywords = {14F20,14C35},
	zbMATH = {1944554},
	Zbl = {1059.14026}
}

@article{KatharinaPurity,
	title={Tame and strongly {\'e}tale cohomology of curves},
	author={Katharina Hübner},
	journal={Israel Journal of Mathematics},
	volume={253},
	number={1},
	pages={1--42},
	year={2023},
	publisher={Springer}
}

@article{DegliseRationalMotivic,
	shorthand = {DFJK21},
	title={On the rational motivic homotopy category},
	volume={8},
	ISSN={2270-518X},
	url={http://dx.doi.org/10.5802/jep.153},
	DOI={10.5802/jep.153},
	journal={Journal de l’École polytechnique — Mathématiques},
	publisher={Cellule MathDoc/CEDRAM},
	author={Déglise, Frédéric and Fasel, Jean and Jin, Fangzhou and Khan, Adeel A.},
	year={2021},
	month=feb, pages={533–583} }

@misc{HTCurves,
	title={Adic curves: stable reduction, skeletons and metric structure}, 
	author={Katharina Hübner and Michael Temkin},
	year={2024},
	eprint={2406.07414},
	archivePrefix={arXiv},
	primaryClass={math.AG},
	urladdr={https://arxiv.org/abs/2406.07414}, 
}

@misc{Merici,
	title={Tame motivic cohomology and a conjecture of H\"ubner--Schmidt}, 
	author={Alberto Merici},
	year={2024},
	eprint={2408.02499},
	archivePrefix={arXiv},
	primaryClass={math.AG},
	url={https://arxiv.org/abs/2408.02499}, 
}

@misc{KelleyCohomology,
	title={Hodge cohomology with a ramification filtration, II}, 
	author={Shane Kelly and Hiroyasu Miyazaki},
	year={2023},
	eprint={2306.06864},
	archivePrefix={arXiv},
	primaryClass={math.AG},
	urladdr={https://arxiv.org/abs/2306.06864}
}

@book{HartshorneAmple,
	title={Ample subvarieties of algebraic varieties},
	author={Hartshorne, Robin},
	volume={156},
	year={2006},
	publisher={Springer}
}

@book{CohomologyLogGros,
	author = {Gros, Michel},
	title = {Classes de {Chern} et classes de cycles en cohomologie de {Hodge-Witt} logarithmique},
	series = {M\'emoires de la Soci\'et\'e Math\'ematique de France},
	publisher = {Soci\'et\'e math\'ematique de France},
	number = {21},
	year = {1985},
	doi = {10.24033/msmf.322},
	mrnumber = {87m:14021},
	zbl = {0615.14011},
	language = {fr},
	url = {http://www.numdam.org/item/MSMF_1985_2_21__1_0/}
}

@misc{morrowLogHodgeWitt,
	title={$K$-theory and logarithmic Hodge-Witt sheaves of formal schemes in characteristic $p$}, 
	author={Matthew Morrow},
	year={2015},
	eprint={1512.04703},
	archivePrefix={arXiv},
	primaryClass={math.KT},
	url={https://arxiv.org/abs/1512.04703}, 
}

@book{MilneEtale,
	title={Etale cohomology (PMS-33)},
	author={Milne, James S},
	year={1980},
	publisher={Princeton university press}
}

@misc{WedhornAdicSpaces,
	title={Adic Spaces}, 
	author={Torsten Wedhorn},
	year={2019},
	eprint={1910.05934},
	archivePrefix={arXiv},
	primaryClass={math.AG},
	url={https://arxiv.org/abs/1910.05934}, 
}

@article{Shiho,
	title={On Logarithmic Hodge-Witt Cohomology of Regular Schemes},
	author={Shiho, Atsushi},
	journal={Journal of Mathematical Sciences-University of Tokyo},
	volume={14},
	number={4},
	pages={567},
	year={2007},
	publisher={Tokyo: The University, c1994-}
}

@article{KatzNilpotent,
	title={Nilpotent connections and the monodromy theorem: Applications of a result of Turrittin},
	author={Katz, Nicholas M},
	journal={Publications math{\'e}matiques de l'IHES},
	volume={39},
	pages={175--232},
	year={1970}
}

@misc{HubnerBaseChange,
title={Tame proper base change for discretely ringed adic spaces}, 
author={Katharina Hübner},
year={2025},
eprint={2503.13312},
archivePrefix={arXiv},
primaryClass={math.AG},
url={https://arxiv.org/abs/2503.13312}
}

@misc{FujiwaraFoundationsRigidGeometry,
      title={Foundations of Rigid Geometry I}, 
      author={Kazuhiro Fujiwara and Fumiharu Kato},
      year={2017},
      eprint={1308.4734},
      archivePrefix={arXiv},
      primaryClass={math.AG},
      url={https://arxiv.org/abs/1308.4734}, 
}
\end{document}